\newtheorem{thm}{Theorem}[section]
\newtheorem{lemma}[thm]{Lemma}
\newcommand{\proj}{\mathop{\rm Proj}\nolimits}
\newcommand{\im}{\mathop{\rm Im}\nolimits}
\newcommand{\calhom}{\mathop{{\mathcal Hom}}\nolimits}
\DeclareMathOperator{\Hom}{Hom}
\DeclareMathOperator{\Ext}{Ext}
\DeclareMathOperator{\calext}{{\mathcal Ext}}
\DeclareMathOperator{\RHom}{RHom}
\DeclareMathOperator{\lotimes}{\otimes^{L}}
\newcommand{\caltor}{\mathop{{\mathcal T\!or}}\nolimits}
\DeclareMathOperator{\Tor}{Tor}
\DeclareMathOperator{\Ker}{Ker}
\DeclareMathOperator{\Coker}{Coker}
\DeclareMathOperator{\End}{End}
\DeclareMathOperator{\length}{length}
\DeclareMathOperator{\rk}{rank}
\DeclareMathOperator{\module}{mod}
\DeclareMathOperator{\Gr}{Gr}
\title[
Nef vector bundles on a quadric threefold
]{
Nef vector bundles on a 
quadric threefold
with first Chern class 
two
}
\thanks{
This work was partially supported by 
JSPS KAKENHI (C) Grant Number 21K03158.
}
\author{Masahiro Ohno
}
\address{Graduate School of Informatics and Engineering,
The University of Electro-Communications,
Chofu-shi,
Tokyo, 182-8585 Japan
}
\email{masahiro-ohno@uec.ac.jp}
\subjclass[2020]{Primary 
14J60;
Secondary 
14J45,
14F08,
}
\keywords{nef vector bundles,
Fano bundles, full strong exceptional collections}
\begin{document}
\begin{abstract}
We classify nef vector bundles on a smooth hyperquadric
of dimension 
three
with first Chern class 
two
over an algebraically closed field of characteristic zero.
In particular, we see that they are globally generated.
\end{abstract}

\maketitle


\section{Introduction}
In 
\cite[\S~2 Theorem~2]{pswnef},
Peternell-Szurek-Wi\'{s}niewski
classified
nef vector bundles on a smooth hyperquadric $\mathbb{Q}^n$ of dimension $n\geq 3$ 
with first Chern class $\leq 1$
over an algebraically closed field $K$ of characteristic zero.
In \cite[Theorem~9.3]{MR4453350}, we 
provided
a different proof of this classification,
which was based on an analysis with a full strong exceptional collection
of vector bundles on $\mathbb{Q}^n$.

In this paper, we 
classify nef vector bundles on
a smooth quadric threefold 
$\mathbb{Q}^3$   
with
first Chern class two.
(In the subsequent paper \cite{HyperquadricOfDim4c1=2}, we classify 
those on a smooth hyperquadric $\mathbb{Q}^n$ of dimension $n\geq 4$.)
The precise statement is as follows.

\begin{thm}\label{Chern2}
Let $\mathcal{E}$ be a nef vector bundle of rank $r$
on a smooth hyperquadric $\mathbb{Q}^3$ of dimension $3$
over an algebraically closed field $K$ 
of characteristic zero,
and let $\mathcal{S}$ be the spinor bundle on $\mathbb{Q}^3$.
Suppose that 
$\det\mathcal{E}\cong \mathcal{O}(2)$.
Then 
$\mathcal{E}$ is isomorphic to one of the following vector bundles
or fits in one of the following exact sequences:
\begin{enumerate}
\item[(1)]
$\mathcal{O}(2)\oplus \mathcal{O}^{\oplus r-1}$;
\item[(2)] $\mathcal{O}(1)^{\oplus 2}\oplus\mathcal{O}^{\oplus r-2}$;
\item[(3)] $\mathcal{O}(1)\oplus 
\mathcal{S}\oplus
\mathcal{O}^{\oplus r-3}$;
\item[(4)] $0\to \mathcal{O}(-1)\to \mathcal{O}(1)\oplus 
\mathcal{O}^{\oplus r}
\to \mathcal{E}\to 0$;
\item[(5)] $0\to \mathcal{O}^{\oplus a}\to\mathcal{S}^{\oplus 2}\oplus\mathcal{O}^{\oplus r-4+a}
\to \mathcal{E}\to 0$, where $a=0$ or $1$,
and the composite of the injection 
$\mathcal{O}^{\oplus a}\to\mathcal{S}^{\oplus 2}\oplus\mathcal{O}^{\oplus r-4+a}$
and the projection $\mathcal{S}^{\oplus 2}\oplus\mathcal{O}^{\oplus r-4+a}
\to \mathcal{O}^{\oplus r-4+a}$ is zero;
\item[(6)] $0\to \mathcal{S}(-1)\oplus  \mathcal{O}(-1)
\to \mathcal{O}^{\oplus r+3}
\to \mathcal{E}\to 0$;
\item[(7)] $0\to \mathcal{O}(-1)^{\oplus 2}\to 
\mathcal{O}^{\oplus r+2}
\to \mathcal{E}\to 0$;
\item[(8)] $0\to \mathcal{O}(-2)\to \mathcal{O}^{\oplus r+1}
\to \mathcal{E}\to 0$;
\item[(9)] $0\to \mathcal{O}(-2)\to 
\mathcal{O}(-1)^{\oplus 4}
\to 
\mathcal{O}^{\oplus r+3}
\to \mathcal{E}\to 0$.
\end{enumerate}
\end{thm}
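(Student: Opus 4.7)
The plan is to follow the general strategy of \cite[Theorem~9.3]{MR4453350}: use a full strong exceptional collection on $\mathbb{Q}^3$ containing the spinor bundle $\mathcal{S}$ (Kapranov's collection) together with its dual to build a Beilinson-type resolution of $\mathcal{E}$, then read off the nine cases from the possible shapes of this resolution. The sheaves appearing in the list, namely $\mathcal{O}$, $\mathcal{O}(\pm 1)$, $\mathcal{O}(\pm 2)$, $\mathcal{S}$, and $\mathcal{S}(-1)$, are exactly those occurring in the exceptional collection and its relevant twists, so the list of outcomes is naturally suggested by this approach.

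Concretely, I would first exploit nefness together with $\det\mathcal{E}\cong\mathcal{O}(2)$ to constrain the Chern numbers. By the Fulton--Lazarsfeld Schur positivity, nefness forces non-negativity of certain polynomials in $c_i(\mathcal{E})$; on $\mathbb{Q}^3$ this bounds $c_2$ and $c_3$ from above once $c_1$ is fixed. Combined with Riemann--Roch on $\mathbb{Q}^3$ (where $H^3=2$ and $\omega_{\mathbb{Q}^3}=\mathcal{O}(-3)$) I would compute the Euler characteristics $\chi(\mathcal{E}(t))$ and $\chi(\mathcal{E}\otimes\mathcal{S}(t))$ for the small values of $t$ relevant to the Beilinson complex. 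Kodaira/Le Potier-type vanishing applied to the ample twists $\mathcal{E}(t)$ for $t\geq 0$ kills all higher cohomology there, and Serre duality transfers these vanishings to $t\leq -3$; the few remaining ``middle'' cohomology dimensions are precisely the multiplicities that enter the Beilinson complex, and a finite case split on their possible values yields the nine shapes in the statement.

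The main obstacle will lie in the last step: analyzing the differentials of the resulting complex to conclude that $\mathcal{E}$ either splits off a trivial part (cases (1)--(3)) or fits into one of the specific extensions (4)--(9). Cases (5), (6), and (9) are the most delicate, because there one must either rule out or precisely describe non-trivial maps between the terms of Kapranov's collection; in particular, the composition-vanishing condition in case (5) --- that the injection $\mathcal{O}^{\oplus a}\to\mathcal{S}^{\oplus 2}\oplus\mathcal{O}^{\oplus r-4+a}$ composed with the projection onto $\mathcal{O}^{\oplus r-4+a}$ is zero --- reflects the fact that any $\mathcal{O}$-summand of the left-hand kernel must map entirely into the spinor part, which requires tracking the precise structure of the Beilinson differentials. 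Finally, I would verify that each nominally distinct case actually yields a nef bundle (so that no case is empty), typically by exhibiting $\mathcal{E}$ as a quotient of a globally generated bundle, which at the same time establishes the global generation claim made in the abstract.
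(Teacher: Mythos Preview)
Your broad strategy---use Kapranov's exceptional collection and a Beilinson/Bondal spectral sequence, with Riemann--Roch and vanishing to pin down the cohomological input---matches the paper's. But two substantive ingredients are missing, and without them the proposal does not close.

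First, the paper's case analysis is not organised by a direct case split on the ``middle'' cohomology dimensions. It is organised by restricting $\mathcal{E}$ to a smooth hyperplane section $\mathbb{Q}^2$ and invoking the author's prior classification of nef bundles on $\mathbb{Q}^2$ with $c_1=(2,2)$ (Theorem~\ref{Chern(2,2)}). That classification has thirteen cases, and for each one the paper reads off $h^q(\mathcal{E}(t)|_{\mathbb{Q}^2})$ and $h^q(\mathcal{S}^\vee\otimes\mathcal{E}(t)|_{\mathbb{Q}^2})$ explicitly, then bootstraps via the restriction sequence to obtain $h^q(\mathcal{E}(t))$ for $t=-1,-2$ (and $h^q(\mathcal{S}^\vee\otimes\mathcal{E}(t))$). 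Your plan to bound Chern numbers by Schur positivity and then split on the few remaining cohomology values is not wrong in principle, but in practice the values $h^q(\mathcal{E}(-1))$, $h^q(\mathcal{E}(-2))$, $h^q(\mathcal{S}^\vee\otimes\mathcal{E}(-1))$ are not determined by $(c_2,c_3)$ alone; the $\mathbb{Q}^2$ input is what actually pins them down and keeps the case list finite and manageable. Without it you would be reduced to guessing these dimensions, and several of the paper's exclusions (e.g.\ Cases (2), (8), (11), (12), (13) of Theorem~\ref{Chern(2,2)} not lifting) rely directly on restriction information.

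Second, you correctly flag that analysing the differentials in the spectral sequence is the hard part, but the paper devotes an entire section (Lemmas~\ref{S2nukelemma}--\ref{SupposedVeryImportantPart2}) to this, and these results are far from routine. The key technical point is that certain $E_2$-terms are cokernels of maps $\psi_a\colon T_{\mathbb{P}^4}(-2)|_{\mathbb{Q}^3}\to\mathcal{S}^{\vee\oplus a}$, and one must show that if $a>0$ (or in one case $a>1$) the restriction of $\mathrm{Coker}(\psi_a)$ to some line has a negative degree quotient, contradicting nefness of $\mathcal{E}$. This uses the explicit geometry of $\mathcal{S}$ (zero loci of sections are lines, the exact sequence~\eqref{OneOfKeyExactSeq}, etc.)\ and is the mechanism that forces $a\le 1$ in Case~(5) and that collapses the spectral sequence to the four-term resolution in Case~(9). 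Your proposal acknowledges that (5), (6), (9) are delicate but gives no indication of how to carry out these exclusions; ``tracking the precise structure of the Beilinson differentials'' is exactly the content of these lemmas, and they are where most of the work lies.

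A minor point: Le~Potier vanishing as usually stated requires ampleness, not just nefness; the paper instead cites specific nef-bundle vanishing lemmas from \cite{MR4453350}. And $h^q(\mathcal{E}(-1))=0$ for $q>0$ is \emph{not} automatic---it can fail (the paper uses \eqref{higherE(-1)vanish} only when $H(\mathcal{E})^{r+2}>0$), so your claim that vanishing plus Serre duality leaves only ``a few'' middle terms is too optimistic.
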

Note that this list is effective: in each case exists an example.
For example, if we denote by $\mathcal{N}$ 
a null correlation bundle on $\mathbb{P}^3$,
then $\pi_p^*(\mathcal{N})$ belongs to Case $(9)$ of Theorem~\ref{Chern2},
where $\pi_p:\mathbb{Q}^3\to \mathbb{P}^3$ is the projection from a point 
$p\in \mathbb{P}^3\setminus\mathbb{Q}^3$.
(Similarly, $\pi_p^*(\Omega_{\mathbb{P}^3}(2))$ belongs to Case $(9)$ of Theorem~\ref{Chern2}.)
Under the stronger assumption that $\mathcal{E}$ is globally generated,
Ballico-Huh-Malaspina 
provided
a classification of $\mathcal{E}$
on $\mathbb{Q}^3$ with $c_1=2$ 
in \cite{MR3509232} and \cite{MR3120621}.

Note also that the projectivization $\mathbb{P}(\mathcal{E})$
of the bundle $\mathcal{E}$ in Theorem~\ref{Chern2}
is a Fano manifold of dimension $r+2$, i.e., the bundle $\mathcal{E}$
in Theorem~\ref{Chern2} is a Fano bundle  on $\mathbb{Q}^3$ of rank $r$.
As a related result, Langer classified smooth Fano $4$-folds with adjunction theoretic scroll
structure over $\mathbb{Q}^3$ in \cite[Theorem 7.2]{MR1633159}.

Our basic strategy and framework for describing  
$\mathcal{E}$
in Theorem~\ref{Chern2} 
is to give a minimal locally free resolution 
of $\mathcal{E}$
in terms of some twists of the full strong exceptional collection 
\[(\mathcal{O},\mathcal{S},\mathcal{O}(1),\mathcal{O}(2))\] 
of vector bundles (see \cite{MR4453350} for more details).

The content of this paper is as follows.
In Section~\ref{Preliminaries}, we briefly recall 
Bondal's theorem \cite[Theorem 6.2]{MR992977}
and its related notions
and results
required
in the proof of Theorem~\ref{Chern2}.
In particular, we recall some finite-dimensional algebra $A$,
and fix some symbols, 
e.g., $G$, $P_i$, and $S_i$, 
related to $A$ and to finitely generated right $A$-modules. 
We also recall the classification 
\cite[Theorem 1.1]{QuadricSurfacec1=2}
of nef vector bundles
on a smooth quadric surface $\mathbb{Q}^2$ with Chern class $(2,2)$
in Theorem~\ref{Chern(2,2)}.
In Section~\ref{PropertyOfSpinors}, we recall some basic properties 
of the spinor bundle $\mathcal{S}$ on $\mathbb{Q}^3$.
In Section~\ref{HRR}, we state Hirzebruch-Riemann-Roch formulas
for vector bundles $\mathcal{E}$ on $\mathbb{Q}^3$ with $c_1=2$
and  for $\mathcal{S}^\vee\otimes \mathcal{E}$.
In Section~\ref{Key Lemmas},
we show some key lemmas 
required 
later in the proof of Theorem~\ref{Chern2}.
In Section~\ref{lowerBound}, we provide a lower bound 
for the third Chern class of a nef vector bundle $\mathcal{E}$,
if $h^0(\mathcal{E}(-D))\neq 0$ for some effective divisor $D$.
In Section~\ref{Set-up for the case $n=3$}, 
we provide
the set up for the proof of Theorem~\ref{Chern2}.
The proof of Theorem~\ref{Chern2} is carried out 
in Sections~\ref{Case(1)OfTheoremChern(2,2)} to \ref{Case(13)or(14)OfTheorem(2,2)},
according to which case of Theorem~\ref{Chern(2,2)} $\mathcal{E}|_{\mathbb{Q}^2}$ belongs to.

\subsection{Notation and conventions}\label{convention}
Throughout this paper,
we work over an algebraically closed field $K$ of characteristic zero.
Basically we follow the standard notation and terminology in algebraic
geometry. 
We denote 
by $\mathbb{Q}^3$ a smooth quadric threefold over $K$,
by $\mathbb{Q}^2$ a smooth quadric surface over $K$,
and by 
\begin{itemize}
\item $\mathcal{S}$ the spinor bundle on $\mathbb{Q}^3$.
\end{itemize}
Note that we follow Kapranov's convention~\cite[p.\ 499]{MR0939472};
our spinor bundle $\mathcal{S}$ is globally generated,
and it is the dual of that of Ottaviani's~\cite{ot}.
For a coherent sheaf $\mathcal{F}$,
we denote by $c_i(\mathcal{F})$ the $i$-th Chern class of $\mathcal{F}$
and by $\mathcal{F}^{\vee}$ the dual of $\mathcal{F}$.
In particular, 
\begin{itemize}
\item $c_i$ stands for $c_i(\mathcal{E})$
of the nef vector bundle $\mathcal{E}$ we are dealing with.
\end{itemize}
For a vector bundle $\mathcal{E}$,
$\mathbb{P}(\mathcal{E})$ denotes $\proj S(\mathcal{E})$,
where $S(\mathcal{E})$ denotes the symmetric algebra of $\mathcal{E}$.
The tautological line bundle 
\begin{itemize}
\item $\mathcal{O}_{\mathbb{P}(\mathcal{E})}(1)$
is also denoted by $H(\mathcal{E})$.
\end{itemize}
Let $A^*\mathbb{Q}^3$ be the Chow ring of $\mathbb{Q}^3$.
We denote
\begin{itemize}
\item by $H$ a hyperplane section of $\mathbb{Q}^3$ and 
by $h$ its class in $A^1\mathbb{Q}^3$: $A^1\mathbb{Q}^3=\mathbb{Z}h$;
\item by $L$ a line in $\mathbb{Q}^3$ and 
by $l$ its class in $A^2\mathbb{Q}^3$:
$A^2\mathbb{Q}^3=\mathbb{Z}l$.
\end{itemize}
Note that $h^2=2l$.
Via the map $\deg:A^3\mathbb{Q}^3\cong \mathbb{Z}$, we identify 
elements $A^3\mathbb{Q}^3$ with 
its corresponding integer; thus we have $h^3=2$ and $hl=1$.
For any closed subscheme $Z$
in $\mathbb{Q}^3$, 
$\mathcal{I}_Z$
denotes
the ideal sheaf of $Z$ in $\mathbb{Q}^3$;
for a point $p\in \mathbb{Q}^3$, $\mathcal{I}_p$ denotes
the ideal sheaf of 
$p\in \mathbb{Q}^3$
and $k(p)$ denotes the residue field of $p\in \mathbb{Q}^3$.
Finally we refer to \cite{MR2095472} for the definition
and basic properties of nef vector bundles.

\section{Preliminaries}\label{Preliminaries}
Throughout this paper, 
$G_0$, $G_1$, $G_2$, $G_3$ denote respectively 
$\mathcal{O}$, $\mathcal{S}$, $\mathcal{O}(1)$, $\mathcal{O}(2)$
on $\mathbb{Q}^3$.
An important and  well known fact~\cite[Theorem~4.10]{MR0939472} of the 
collection 
$(G_0,G_1,G_2,G_3)$
is that it is a full strong exceptional collection in 
$D^b(\mathbb{Q}^3)$,
where $D^b(\mathbb{Q}^3)$ denotes the bounded derived category
of (the abelian category of) coherent sheaves on $\mathbb{Q}^3$.
Here we use the term ``collection'' to mean ``family'', not ``set''. 
Thus an exceptional collection is also called an exceptional sequence. 
We refer to \cite{MR2244106} for the definition of a full strong exceptional sequence.

Denote by $G$ the direct sum $\bigoplus_{i=0}^3G_i$ of $G_0$, $G_1$, $G_2$, and $G_3$,
and by $A$ the endomorphism ring $\End(G)$ of $G$.
The ring  $A$ is a finite-dimensional $K$-algebra,
and $G$ is a left $A$-module.
Note that $\Ext^q(G, \mathcal{F})$ is a finitely generated 
right $A$-module for a coherent sheaf $\mathcal{F}$ 
on $\mathbb{Q}^3$.
We denote by $\module A$ the category of finitely generated right $A$-modules
and by $D^b(\module A)$ the bounded derived category
of $\module A$.
Let $p_i:G\to G_i$ be the projection,
and $\iota_i:G_i\hookrightarrow G$ the inclusion.
Set $e_i=\iota_i\circ p_i$. Then $e_i\in A$.
Set 
\[P_i=e_iA.\]
Then $A\cong \oplus_i P_i$ as right $A$-modules,
and $P_i$'s are projective right $A$-modules.
We see that $P_i\otimes_A G\cong G_i$.
Any finitely generated right $A$-module $V$
has an ascending filtration 
\[0=V^{\leq -1}\subset V^{\leq 0}\subset V^{\leq 1}\subset 
V^{\leq 2}
\subset V^{\leq 3}=V\]
by right $A$-submodules,
where $V^{\leq i}$ is defined to be $\bigoplus_{j\leq i}Ve_j$.
Set $\Gr^iV=V^{\leq i}/V^{\leq i-1}$
and 
\[S_i=\Gr^iP_i.\]
Then 
$\Gr^iS_i\cong K$ as $K$-vector spaces,
$\Gr^jS_i=0$ for any $j\neq i$,
and $S_i$ is a simple right $A$-module.
If we set $m_i=\dim_K\Gr^iV$, then 
$\Gr^iV\cong S_i^{\oplus m_i}$
as right $A$-modules.

It follows from Bondal's theorem \cite[Theorem 6.2]{MR992977}
that 
\[\RHom(G,\bullet):D^b(\mathbb{Q}^3)\to D^b (\module A)\]
is an exact equivalence,
and its quasi-inverse is 
\[\bullet\lotimes_AG:D^b(\module A)\to D^b(\mathbb{Q}^3).\]
For a coherent sheaf $\mathcal{F}$ on $\mathbb{Q}^3$, this fact can be rephrased 
in terms of a spectral sequence\cite[Theorem 1]{MR3275418}:
\begin{equation}\label{BondalSpectral}
E_2^{p,q}=\caltor_{-p}^A(\Ext^q(G,\mathcal{F}),G)
\Rightarrow
E^{p+q}=
\begin{cases}
\mathcal{F}& \textrm{if}\quad  p+q= 0\\
0& \textrm{if}\quad  p+q\neq 0,
\end{cases}
\end{equation}
which is called  the Bondal spectral sequence.
Note that $E_2^{p,q}$ is the $p$-th cohomology sheaf 
$\mathcal{H}^p(\Ext^q(G,\mathcal{F})\lotimes_AG)$ of the complex 
$\Ext^q(G,\mathcal{F})\lotimes_AG$.
When we compute the spectral sequence, we consider the descending filtration
on the right $A$-module $\Ext^q(G,\mathcal{F})$, and apply the following 

\begin{lemma}\label{S2Arilemma}
We have 
\begin{align}
S_3\lotimes_AG&\cong \mathcal{O}(-1)[3];\label{S3O(-1)}\\ 
S_2\lotimes_AG&\cong T_{\mathbb{P}^4}(-2)|_{\mathbb{Q}^3}[2];\label{S2F}\\
S_1\lotimes_AG&\cong \mathcal{S}^{\vee}[1]\cong \mathcal{S}(-1)[1];
\label{S1Sdual}\\
S_0\lotimes_AG&\cong \mathcal{O},\label{S0O}
\end{align}
where $T_{\mathbb{P}^4}$ denotes the tangent bundle of $\mathbb{P}^4$.
\end{lemma}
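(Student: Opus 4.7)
The plan is to use Bondal's theorem in reverse. Since $\RHom(G,-)$ and $(-)\lotimes_A G$ are mutually quasi-inverse equivalences, in order to establish $S_i\lotimes_A G\cong\mathcal{F}_i$ for each claimed right-hand side $\mathcal{F}_i$ it suffices to verify $\RHom(G,\mathcal{F}_i)\cong S_i$ in $D^b(\module A)$. Because $S_i$ is the unique (up to isomorphism) right $A$-module supported on the single grading position $i$ with a one-dimensional $K$-piece there, this in turn reduces to checking the identity
\[
\Ext^q(G_j,\mathcal{F}_i)\;=\;\begin{cases} K & \text{if } (j,q)=(i,0),\\ 0 & \text{otherwise.}\end{cases}
\]

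The case $i=0$ is immediate: the strong exceptional property gives $\Hom(G_j,\mathcal{O})=0$ for $j\geq 1$, so $P_0=S_0=K$ and $S_0\lotimes_A G=G_0=\mathcal{O}$. For $i=3$, the groups $\Ext^q(G_j,\mathcal{O}(-1)[3])=H^{q+3}(G_j^{\vee}(-1))$ are handled by Serre duality on $\mathbb{Q}^3$ (using $\omega=\mathcal{O}(-3)$) together with the exceptional vanishing $H^{*}(\mathcal{S}(-2))=\Ext^{*}(\mathcal{O}(2),\mathcal{S})=0$; the only surviving contribution is $H^3(\mathcal{O}(-3))\cong K$, at $(j,q)=(3,0)$. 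For $i=1$, I would invoke the universal spinor sequence
\[0\to\mathcal{S}^{\vee}\to\mathcal{O}^{\oplus 4}\to\mathcal{S}\to 0,\]
where $\mathcal{S}^{\vee}\cong\mathcal{S}(-1)$ because $\det\mathcal{S}\cong\mathcal{O}(1)$ and $\rk\mathcal{S}=2$. Since $P_1^{\leq 0}=\Hom(\mathcal{O},\mathcal{S})$ is concentrated in grading $0$, it is $A$-isomorphic to $P_0^{\oplus 4}$, so $0\to P_0^{\oplus 4}\to P_1\to S_1\to 0$ is a projective resolution of $S_1$; applying $(-)\lotimes_A G$ recovers the spinor sequence, identifying $S_1\lotimes_A G$ with $\mathcal{S}^{\vee}[1]\cong\mathcal{S}(-1)[1]$.

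The substantial case is $i=2$. I would apply $\Hom(G_j,-)$ to the restricted Euler sequence
\[0\to\mathcal{O}(-2)\to\mathcal{O}(-1)^{\oplus 5}\to T_{\mathbb{P}^4}(-2)|_{\mathbb{Q}^3}\to 0\]
and read $\Ext^{q+2}(G_j,T_{\mathbb{P}^4}(-2)|_{\mathbb{Q}^3})$ off the associated long exact sequences. For $j=0,1$ every term vanishes: $H^{*}(\mathcal{O}(-1))=H^{*}(\mathcal{O}(-2))=0$, and $H^{*}(\mathcal{S}(-2))=H^{*}(\mathcal{S}(-3))=0$ (the latter by Serre duality plus $H^{*}(\mathcal{S}(-1))=\Ext^{*}(\mathcal{S},\mathcal{O})=0$). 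The case $j=2$ yields the single surviving $K$, coming from $H^3(\mathcal{O}(-3))$. For $j=3$, the required vanishing of $H^{*}(T_{\mathbb{P}^4}(-4)|_{\mathbb{Q}^3})$ reduces to showing that the induced map $H^3(\mathcal{O}(-4))\to H^3(\mathcal{O}(-3))^{\oplus 5}$ is an isomorphism of $5$-dimensional spaces; this holds because its Serre dual is the canonical surjection $H^0(\mathcal{O})^{\oplus 5}\twoheadrightarrow H^0(\mathcal{O}(1))$ on $\mathbb{Q}^3$. The main obstacle is simply this bookkeeping across the four values of $j$ and the two connecting maps; each individual cohomology input is standard, and no genuinely new idea beyond Serre duality and the exceptional property is needed.
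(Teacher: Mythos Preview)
Your proof is correct and follows essentially the same approach as the paper: both verify $\RHom(G,\mathcal{F}_i)\cong S_i$ via the Bondal equivalence, use the restricted Euler sequence together with Serre duality for the $i=2$ case, and invoke the standard vanishing $h^q(\mathcal{S}(-i))=0$ for $i=1,2,3$. The only expository difference is at $i=1$: you compute $S_1\lotimes_A G$ directly from the two-term projective resolution $0\to P_0^{\oplus 4}\to P_1\to S_1\to 0$ (recovering the spinor sequence), whereas the paper cites an external reference for the reverse isomorphism $\RHom(G,\mathcal{S}^{\vee}[1])\cong S_1$; this is the same content presented from the other side.
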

\begin{proof}
Since $\RHom(G,\mathcal{O}(-1)[3])\cong S_3$,
we obtain \eqref{S3O(-1)}.
Note that we have an isomorphism
$\RHom(G,\mathcal{S}^{\vee}[1])\cong S_1$ by \cite[Lemma 8.2 (1)]{MR4453350}.
Hence 
we have \eqref{S1Sdual}.
It is easy to see that the last isomorphism \eqref{S0O} holds.
To see \eqref{S2F}, first
note that  we have the following exact sequence:
\[
0\to \mathcal{O}(-2)\to 
\mathcal{O}(-1)\otimes H^0(\mathcal{O}(1))^{\vee}
\to T_{\mathbb{P}^4}(-2)|_{\mathbb{Q}^3}\to 0. 
\]
Serre duality shows that  
\[H^3(\mathcal{O}(-4))\to 
H^3(\mathcal{O}(-3))\otimes H^0(\mathcal{O}(1))^{\vee}\]
is dual of the canonical isomorphism
\[
H^0(\mathcal{O})\otimes H^0(\mathcal{O}(1))
\to H^0(\mathcal{O}(1)).
\]
Hence $H^q(T_{\mathbb{P}^4}(-4)|_{\mathbb{Q}^3})=0$ for all $q$.
Moreover $h^q(\mathcal{S}^{\vee}(-i))=0$ for $i=0,1,2$ and all $q$.
Therefore we conclude that $\RHom(G,T_{\mathbb{P}^4}(-2)|_{\mathbb{Q}^3}))$ is isomorphic to $S_2[-2]$.
\end{proof}

Our proof of Theorem~\ref{Chern2} relies on 
the following
theorem~\cite[Theorem 1.1]{QuadricSurfacec1=2}:
\begin{thm}\label{Chern(2,2)}
Let $\mathcal{E}$ be a nef vector bundle of rank $r$
on a smooth quadric surface $\mathbb{Q}^2$
over an algebraically closed field $K$ 
of characteristic zero.
Suppose that 
$\det\mathcal{E}\cong \mathcal{O}(2,2)$.
Then 
$\mathcal{E}$ is isomorphic to one of the following vector bundles
or fits in one of the following exact sequences:
\begin{enumerate}
\item[(1)]
$\mathcal{O}(2,2)\oplus \mathcal{O}^{\oplus r-1}$;
\item[(2)] 
$\mathcal{O}(2,1)\oplus\mathcal{O}(0,1)\oplus  \mathcal{O}^{\oplus r-2}$;
\\
$\mathcal{O}(1,2)\oplus
\mathcal{O}(1,0)\oplus  \mathcal{O}^{\oplus r-2}$;
\\
$($We do not exhibit the cases
obtained by replacing $(a,b)$ with $(b,a)$ in the following:$)$
\item[$(3)$] $\mathcal{O}(1,1)^{\oplus 2}
\oplus  \mathcal{O}^{\oplus r-2}$;
\item[$(4)$] 
$0\to \mathcal{O}
\xrightarrow{\iota}
\mathcal{O}(1,1)\oplus
\mathcal{O}(1,0)\oplus  
\mathcal{O}(0,1)\oplus  
\mathcal{O}^{\oplus r-2}\to \mathcal{E}\to 0$;\\
$($More precisely, either the composite of $\iota$ and the projection 
\[\mathcal{O}(1,1)\oplus
\mathcal{O}(1,0)\oplus  
\mathcal{O}(0,1)\oplus  
\mathcal{O}^{\oplus r-2}\to \mathcal{O}\]
is zero, or 
$\mathcal{E}$ is isomorphic to $\mathcal{O}(1,1)\oplus
\mathcal{O}(1,0)\oplus  
\mathcal{O}(0,1)\oplus  
\mathcal{O}^{\oplus r-3}$.$)$
\item[$(5)$] $0\to \mathcal{O}(-1,-1)\to \mathcal{O}(1,1)\oplus
\mathcal{O}^{\oplus r}\to \mathcal{E}\to 0$;
\item[$(6)$]  $0\to \mathcal{O}^{\oplus 2}\to \mathcal{O}(1,0)^{\oplus 2}\oplus
\mathcal{O}(0,1)^{\oplus 2}\oplus  
\mathcal{O}^{\oplus r-2}\to \mathcal{E}\to 0$;
\\
$($More precisely, 
\begin{enumerate}
\item[$(6$-$1)$] 
$0\to \mathcal{O}\to \mathcal{O}(2,0)\oplus
\mathcal{O}(0,1)^{\oplus 2}\oplus  
\mathcal{O}^{\oplus r-2}\to \mathcal{E}\to 0$;
\\ In this case, we have
\begin{enumerate}
\item[$(6$-$1$-$1)$] $\mathcal{O}(2,0)\oplus\mathcal{O}(0,2)\oplus\mathcal{O}^{\oplus r-2}$;
\item[$(6$-$1$-$2)$] $\mathcal{O}(2,0)\oplus\mathcal{O}(0,1)^{\oplus 2}\oplus  \mathcal{O}^{\oplus r-3}$;
\end{enumerate}
\item[$(6$-$2)$] $\mathcal{O}(1,0)^{\oplus 2}\oplus  
\mathcal{O}(0,1)^{\oplus 2}\oplus  
\mathcal{O}^{\oplus r-4}$;
\item[$(6$-$3)$]
$0\to \mathcal{O}(0,-1)\to \mathcal{O}(1,0)^{\oplus 2}\oplus  
\mathcal{O}(0,1)\oplus  
\mathcal{O}^{\oplus r-2}\to \mathcal{E}\to 0.)$
\end{enumerate}
\item[$(7)$] $0\to \mathcal{O}(-1,-1)
\oplus \mathcal{O}(-1,0)
\oplus \mathcal{O}(0,-1)
\to \mathcal{O}^{\oplus r+3}\to \mathcal{E}\to 0$;
\item[$(8)$] $0\to \mathcal{O}(-1,-2)
\to \mathcal{O}(1,0)\oplus 
\mathcal{O}^{\oplus r}\to \mathcal{E}\to 0$;
\item[$(9)$] $0\to \mathcal{O}(-1,-1)^{\oplus 2}\to   
\mathcal{O}^{\oplus r+2}\to \mathcal{E}\to 0$;
\item[$(10)$] $0\to \mathcal{O}(-2,-2)\to   
\mathcal{O}^{\oplus r+1}\to \mathcal{E}\to 0$;
\item[$(11)$] $0\to \mathcal{O}(-2,-2)\to   
\mathcal{O}^{\oplus r+1}\to \mathcal{E}\to k(p)\to 0$;
\item[$(12)$] $0\to \mathcal{O}(-2,-2)\to   
\mathcal{O}^{\oplus r}\to \mathcal{E}\to \mathcal{O}\to 0$;
\item[$(13)$] $0\to \mathcal{O}(-1,-1)^{\oplus 4}\to   
\mathcal{O}^{\oplus r}\oplus\mathcal{O}(-1,0)^{\oplus 2}
\oplus \mathcal{O}(0,-1)^{\oplus 2}
\to \mathcal{E}\to 0$.
\end{enumerate}
\end{thm}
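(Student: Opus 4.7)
The plan is to parallel the approach of Theorem~\ref{Chern2}: apply the Bondal equivalence coming from a full strong exceptional collection on $\mathbb{Q}^2\cong\mathbb{P}^1\times\mathbb{P}^1$ and read off the minimal resolution of $\mathcal{E}$ from the structure of $\bigoplus_q\Ext^q(G,\mathcal{E})$, constrained by nefness. First I would take the full strong exceptional collection $(\mathcal{O},\mathcal{O}(1,0),\mathcal{O}(0,1),\mathcal{O}(1,1))$ on $\mathbb{Q}^2$, set up the analogues of the algebra $A$ and the modules $P_i$, $S_i$ from Section~\ref{Preliminaries}, and compute the images $S_i\lotimes_A G$: these turn out to be shifts of $\mathcal{O}$, $\mathcal{O}(-1,0)$, $\mathcal{O}(0,-1)$, $\mathcal{O}(-1,-1)$, which is exactly the list of negative twists appearing on the left of items (1)--(13).

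Next I would restrict $\mathcal{E}$ to lines $L_1, L_2$ of classes $(1,0)$ and $(0,1)$. Since $c_1\cdot L_i = 2$, nefness forces $\mathcal{E}|_{L_i}$ to be either $\mathcal{O}(2)\oplus\mathcal{O}^{\oplus r-1}$ or $\mathcal{O}(1)^{\oplus 2}\oplus\mathcal{O}^{\oplus r-2}$ for each $i$. Feeding this into semicontinuity and into Hirzebruch--Riemann--Roch for the twists $\mathcal{E}(-a,-b)$ with $0\leq a,b\leq 2$ produces bounds on $h^q(\mathcal{E}(-a,-b))$ in terms of $c_2(\mathcal{E})$; together with $c_2\geq 0$ and a Bogomolov-type inequality from nefness, these yield a finite list of admissible tuples $\bigl(c_2,\dim_K\Ext^q(G,\mathcal{E})\bigr)$ which I expect to match the thirteen items on the nose.

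For each admissible tuple the Bondal spectral sequence collapses to a minimal two- or three-term complex of sums of $G_i$'s quasi-isomorphic to $\mathcal{E}$; identifying the shape of this complex with items (1)--(13) and using nefness of $\mathcal{E}$ to exclude negative summands in the kernel or cokernel of the outer maps gives the stated resolutions. The finer subcases, for instance the composition condition appearing in (4) or the split vs.\ non-split alternatives in (6-1-1) and (6-1-2), are settled by examining the quiver structure of the right $A$-module $\Ext^*(G,\mathcal{E})$ and ruling out any configuration that would force $\mathcal{E}$ to admit a quotient with negative first Chern class on a ruling line. Existence of each case is then witnessed by an explicit elementary transformation of $\mathcal{O}(a,b)\oplus\mathcal{O}^{\oplus s}$ or by a pullback from a projection.

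The hard part will be the cases (8)--(13), where the minimal resolution involves $\mathcal{O}(-1,-2)$ or $\mathcal{O}(-2,-2)$: here many of the relevant cohomology groups are small, so the numerical bookkeeping is delicate, and distinguishing among (10)--(12) requires analyzing the cokernel of a section $\mathcal{O}(-2,-2)\to\mathcal{O}^{\oplus N}$, which forces one to leave the abstract spectral sequence and argue with the evaluation map of global sections of $\mathcal{E}$, showing that the only possible zero-dimensional torsion is supported at a single point (the $k(p)$ summand of (11)) or has the form $\mathcal{O}$ (case (12)).
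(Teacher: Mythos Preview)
This theorem is not proved in the present paper; it is quoted as \cite[Theorem~1.1]{QuadricSurfacec1=2} and used as a black box in the proof of Theorem~\ref{Chern2}. There is therefore no proof here against which to compare your proposal.

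Your outline is the natural two-dimensional analogue of the $\mathbb{Q}^3$ argument and is presumably close to what the companion preprint does. One point deserves correction, however: the objects $S_i\lotimes_A G$ for the collection $(\mathcal{O},\mathcal{O}(1,0),\mathcal{O}(0,1),\mathcal{O}(1,1))$ are indeed shifts of $\mathcal{O}$, $\mathcal{O}(-1,0)$, $\mathcal{O}(0,-1)$, $\mathcal{O}(-1,-1)$, but this is \emph{not} ``exactly the list of negative twists appearing on the left of items (1)--(13)''. Cases (8) and (10)--(12) involve $\mathcal{O}(-1,-2)$ and $\mathcal{O}(-2,-2)$, which never occur as a term in any $E_2^{p,q}$ of the Bondal spectral sequence for this collection. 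Thus in those cases the spectral sequence applied to $\mathcal{E}$ will output a longer resolution built only from $\mathcal{O}$, $\mathcal{O}(-1,0)$, $\mathcal{O}(0,-1)$, $\mathcal{O}(-1,-1)$, and an extra step is needed to rewrite it in the displayed form with $\mathcal{O}(-2,-2)$ or $\mathcal{O}(-1,-2)$---typically by recognising a Koszul-type syzygy among the maps, or by running the spectral sequence on a twist of $\mathcal{E}$ instead. This is precisely the content of what you call ``the hard part'', and it is a genuine step beyond numerical bookkeeping rather than merely a delicate one.
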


\section{Some basic properties of the spinor bundle $\mathcal{S}$
on $\mathbb{Q}^3$}\label{PropertyOfSpinors}
We recall some basic facts and properties 
of the spinor bundle $\mathcal{S}$ on $\mathbb{Q}^3$
in our notation
(see Ottaviani's result~\cite{ot} and \cite[Theorem 8.1]{MR4453350}).
First we have an exact sequence
\begin{equation}\label{SSdual}
0\to \mathcal{S}^{\vee}\to \mathcal{O}^{\oplus 4}\to \mathcal{S}\to 0
\end{equation}
by \cite[Theorem~2.8 (1)]{ot}.
The restriction $\mathcal{S}|_{\mathbb{Q}^2}$
of $\mathcal{S}$ to a smooth hyperplane section $\mathbb{Q}^2$ of $\mathbb{Q}^3$
is isomorphic to $\mathcal{O}(1,0)\oplus \mathcal{O}(0,1)$,
and $h^0(\mathcal{S})=4$.
We have $\det\mathcal{S}=\mathcal{O}(1)$,
and thus the canonical isomorphism 
\begin{equation}\label{canonicalIsom}
\mathcal{S}^{\vee}(1)\cong \mathcal{S}.
\end{equation}
The zero locus $(s)_0$ of every non-zero element $s$ of $H^0(\mathcal{S})$
is a line $l$ in $\mathbb{Q}^3$. 
Thus $c_1(\mathcal{S})\cap [\mathbb{Q}^3]=h$
and $c_2(\mathcal{S})\cap [\mathbb{Q}^3]=l$.
We have $h^q(\mathcal{S})=0$ for any $q>0$
and $h^q(\mathcal{S}(-i))=0$ for all $q$ if $i=1,2,$ or $3$.

\begin{lemma}\label{wedgeproductOfS}
The natural map
\[
H^0(\mathcal{S})\otimes H^0(\mathcal{S})\to H^0(\mathcal{O}(1))
\]
sending $s\otimes t$ to $s\wedge t$ is surjective.
\end{lemma}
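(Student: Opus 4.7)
The plan is to factor the wedge map as the composition
\[
H^0(\mathcal{S}) \otimes H^0(\mathcal{S}) \longrightarrow H^0(\mathcal{S} \otimes \mathcal{S}) \longrightarrow H^0(\wedge^2 \mathcal{S}) = H^0(\mathcal{O}(1)),
\]
where the second arrow is induced by the sheaf projection $\mathcal{S} \otimes \mathcal{S} \to \wedge^2 \mathcal{S}$, and then show that both arrows are surjective.

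For the second arrow, I would observe that $\mathcal{S}$ has rank $2$ with $\wedge^2 \mathcal{S} = \det \mathcal{S} = \mathcal{O}(1)$, so in characteristic zero the natural sequence
\[
0 \to \mathrm{Sym}^2 \mathcal{S} \to \mathcal{S} \otimes \mathcal{S} \to \wedge^2 \mathcal{S} \to 0
\]
is canonically split. Taking $H^0$ then gives a (split) surjection onto $H^0(\mathcal{O}(1))$.

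For the first arrow, I would identify the surjection $\mathcal{O}^{\oplus 4} \to \mathcal{S}$ in \eqref{SSdual} with the evaluation map $H^0(\mathcal{S}) \otimes \mathcal{O} \to \mathcal{S}$ (using $h^0(\mathcal{S}) = 4$ and global generation of $\mathcal{S}$), and tensor the resulting short exact sequence by $\mathcal{S}$ to obtain
\[
0 \to \mathcal{S}^\vee \otimes \mathcal{S} \to H^0(\mathcal{S}) \otimes \mathcal{S} \to \mathcal{S} \otimes \mathcal{S} \to 0.
\]
The induced long exact sequence reads
\[
H^0(\mathcal{S}) \otimes H^0(\mathcal{S}) \to H^0(\mathcal{S} \otimes \mathcal{S}) \to H^1(\mathcal{S}^\vee \otimes \mathcal{S}) \to \cdots,
\]
so the desired surjectivity follows from $H^1(\mathcal{S}^\vee \otimes \mathcal{S}) = \Ext^1(\mathcal{S}, \mathcal{S}) = 0$.

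This last vanishing holds because $\mathcal{S}$ is an exceptional object in $D^b(\mathbb{Q}^3)$, being one of the factors of the full strong exceptional collection $(\mathcal{O}, \mathcal{S}, \mathcal{O}(1), \mathcal{O}(2))$ recalled in Section~\ref{Preliminaries}. The proof is short and the only point requiring care is to check that the first map in the factorization is indeed the one coming from tensor-then-project, so that the ``main obstacle'' is essentially the bookkeeping that identifies $H^0$ of the evaluation-by-$\mathcal{S}$ sequence with the natural tensor map $H^0(\mathcal{S}) \otimes H^0(\mathcal{S}) \to H^0(\mathcal{S} \otimes \mathcal{S})$.
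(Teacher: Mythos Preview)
Your proof is correct and takes a genuinely different route from the paper's. The paper argues by explicit coordinates: it realizes $\mathbb{Q}^3$ as a hyperplane section of the Pl\"ucker-embedded Grassmannian $G(1,3)\subset\mathbb{P}^5$, identifies $\mathcal{S}$ with the restriction of the universal quotient bundle, writes down a basis $s_0,\dots,s_3$ of $H^0(\mathcal{S})$ coming from the coordinates $x_0,\dots,x_3$ on $\mathbb{P}^3$, and then observes directly that the Pl\"ucker coordinates $X_{ij}=s_i\wedge s_j$ span $H^0(\mathcal{O}(1))$. Your argument is instead purely cohomological: you reduce surjectivity to the vanishing of $\Ext^1(\mathcal{S},\mathcal{S})$, which is immediate from the fact that $\mathcal{S}$ is a member of the full strong exceptional collection already invoked in Section~\ref{Preliminaries}. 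Your approach is shorter, coordinate-free, and makes transparent why the result holds (exceptionality of $\mathcal{S}$); the paper's approach has the virtue of giving an explicit spanning set, which could in principle be useful elsewhere, but for the purposes of this lemma your argument is cleaner and fits the ambient machinery better.
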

\begin{proof}
Without loss of generality,
we may assume that $\mathbb{Q}^3$ is defined by 
an equation $X_{01}^2-X_{02}X_{13}+X_{03}X_{12}=0$,
where 
$[X_{01}:X_{02}:X_{03}:X_{12}:X_{13}]$ 
is the homogeneous coordinates of 
$\mathbb{P}^4$.
We may also 
regard $\mathbb{Q}^3$ as a smooth hyperplane section $H\cap \mathbb{Q}^4$
of a smooth hyperquadric $\mathbb{Q}^4$
defined by an equation $X_{01}X_{23}-X_{02}X_{13}+X_{03}X_{12}=0$,
where $X_{ij}$ ($0\leq i<j\leq 3$) are homogeneous coordinates of $\mathbb{P}^5$,
and $H$ is the hyperplane defined by $X_{01}=X_{23}$.
Note that 
$\mathbb{Q}^4$ is the image of
the Grassmannian $G(1,3)$ parametrizing lines in $\mathbb{P}^3$
by the Pl\"{u}cker embedding $\iota$.
If we represent a point in $G(1,3)$ by a matrix
$\begin{bmatrix}
x_{10}&x_{11}&x_{12}&x_{13}\\
x_{20}&x_{21}&x_{22}&x_{23}
\end{bmatrix}$,
then $\iota^*X_{ij}=
\begin{vmatrix}
x_{1i}&x_{1j}\\
x_{2i}&x_{2j}
\end{vmatrix}$.
We will identify $\mathbb{Q}^4$ with $G(1,3)$ via $\iota$. 
Let 
$H^0(\mathbb{P}^3,\mathcal{O}(1))\otimes \mathcal{O}_{G(1,3)} \to 
\mathcal{Q}$
be the universal quotient bundle on $G(1,3)$,
which sends homogeneous coordinates $x_j$ of $\mathbb{P}^3$
to global sections 
$s_j$ 
of 
$\mathcal{Q}$
represented by 
$\begin{bmatrix}
x_{1j}\\
x_{2j}
\end{bmatrix}$.
Recall that $\mathcal{S}$ is the restriction of $\mathcal{U}$
to the hyperplane section $H\cap \mathbb{Q}^4=\mathbb{Q}^3$.
By abuse of notation, we will denote by $s_j$ the restriction of $s_j$ to 
$\mathbb{Q}^3$.
Since $h^0(\mathcal{S})=4$,
$H^0(\mathcal{S})$ is spanned by $s_0,s_1,s_2,s_3$.
Moreover 
$H^0(\mathcal{O}(1))$ is 
spanned
by $X_{i,j}=s_i\wedge s_j$,
where $(i,j)=(0,1), (0,2), (0,3), (1,2)$, and $(1,3)$. 
This completes the proof.
\end{proof}

\section{Hirzebruch-Riemann-Roch formulas}\label{HRR}
Let $\mathcal{E}$ be a vector bundle of rank $r$ on $\mathbb{Q}^3$.
Since the tangent bundle $T$ of $\mathbb{Q}^3$ fits in an exact sequence
\[0\to T\to T_{\mathbb{P}^4}|_{\mathbb{Q}^3}
\to \mathcal{O}_{\mathbb{Q}^3}(2)\to 0,\]
the Chern polynomial $c_t(T)$ of $T$ is 
\[\dfrac{(1+ht)^5}{1+2ht}=1+3ht+4h^2t^2+2h^3t^3,\]
where $h$ denotes $c_1(\mathcal{O}_{\mathbb{Q}^3}(1))$.
Then the Hirzebruch-Riemann-Roch formula implies that
\[
\chi(\mathcal{E})=r+\dfrac{13}{12}c_1h^2
+\dfrac{3}{4}(c_1^2-2c_2)h+
\dfrac{1}{6}(c_1^3-3c_1c_2+3c_3),
\]
where we set $c_i=c_i(\mathcal{E})$.
To compute $\chi(\mathcal{E}(t))$, note that 
\begin{align*}
c_1(\mathcal{E}(t))&=c_1+rth;\\
c_2(\mathcal{E}(t))&
=c_2+(r-1)tc_1h+
\binom{r}{2}
t^2h^2;\\
c_3(\mathcal{E}(t))&=c_3
+(r-2)tc_2h+
\binom{r-1}{2}t^2c_1
h^2+\binom{r}{3}t^3h^3.
\end{align*}
Since $h^3=2$, we infer that 
\begin{equation}\label{generalRRonQ3}
\begin{split}
\chi(\mathcal{E}(t))=&\dfrac{r}{3}t^3+
\dfrac{1}{2}(c_1h^2+3r)t^2
+\dfrac{1}{2}\{3c_1h^2
+(c_1^2-2c_2)h+\dfrac{13}{3}r\}t\\
&+r+\dfrac{13}{12}c_1h^2
+\dfrac{3}{4}(c_1^2-2c_2)h
+\dfrac{1}{6}(c_1^3-3c_1c_2+3c_3).
\end{split}
\end{equation}
Since $c_1(\mathcal{E})=dh$ for some integer $d$,
the formula above can be written as
\begin{equation}
\begin{split}
\chi(\mathcal{E}(t))=&
\dfrac{r}{6}(2t+3)(t+2)(t+1)+dt^2+(d^2+3d)t-c_2ht
\\
&+\dfrac{d}{6}(2d^2+9d+13)
+\dfrac{1}{2}\{c_3-(d+3)c_2h\}.
\end{split}
\end{equation}
In this paper, we are dealing with the case $d=2$:
\begin{equation}\label{e(t)RR}
\chi(\mathcal{E}(t))=
\dfrac{r}{6}(2t+3)(t+2)(t+1)+2t^2+10t+13-c_2ht
+\dfrac{1}{2}\{c_3-5c_2h\}.
\end{equation}
In particular,
\begin{align}
\chi(\mathcal{E}(-1))&=
5-\dfrac{3}{2}c_2h
+\dfrac{1}{2}c_3;\label{e(-1)RR}\\
\chi(\mathcal{E}(-2))&=
1-\dfrac{1}{2}c_2h
+\dfrac{1}{2}c_3.\label{e(-2)RR}
\end{align}
Next we will compute $\chi(\mathcal{S}^{\vee}\otimes \mathcal{E}(t))$.
Recall that $c_1(\mathcal{S})=h$ and that $c_1(\mathcal{S})c_2(\mathcal{S})=1$.
Note also that 
\begin{align*}
\rk \mathcal{S}^{\vee}\otimes \mathcal{E}=&2r;\\
c_1(\mathcal{S}^{\vee}\otimes \mathcal{E})=&2c_1-rh;\\
c_2(\mathcal{S}^{\vee}\otimes \mathcal{E})=
&2c_2-(2r-1)c_1h
+c_1^2+\binom{r}{2}h^2+rc_2(\mathcal{S});\\
c_3(\mathcal{S}^{\vee}\otimes\mathcal{E})=
&2c_3-2(r-1)c_2h+(r-1)^2c_1h^2
+2(r-1)c_1c_2(\mathcal{S})\\
&+2c_1c_2-(r-1)c_1^2h-\dfrac{1}{3}r(r^2-1).
\end{align*}
The formula \eqref{generalRRonQ3} together with the formulas above 
implies the following formula:
\begin{align*}
\chi(\mathcal{S}^{\vee}\otimes \mathcal{E}(t))=&\dfrac{2}{3}rt^3+
(c_1h^2+2r)t^2
+\{2c_1h^2+(c_1^2-2c_2)h+\dfrac{4}{3}r\}t\\
&+\dfrac{7}{6}c_1h^2+c_1^2h-2c_2h
+\dfrac{1}{3}c_1^3
+c_3-c_1c_2-c_1c_2(\mathcal{S}).
\end{align*}
Since $c_1=dh$, the formula above becomes the following formula:
\begin{equation}\label{SobokuRRwithSpinor}
\begin{split}
\chi(\mathcal{S}^{\vee}\otimes \mathcal{E}(t))=&
\dfrac{2}{3}rt(t+1)(t+2)+2dt^2+2d(d+2)t\\
&+
\dfrac{2}{3}d(d+1)(d+2)-(2t+d+2)c_2h+c_3.
\end{split}
\end{equation}
For the case $d=2$, we have
\begin{equation}\label{SERR}
\chi(\mathcal{S}^{\vee}\otimes \mathcal{E}(t))=
\dfrac{2}{3}rt(t+1)(t+2)+4(t+2)^2
-2(t+2)c_2h+c_3.
\end{equation}
In particular,
\begin{equation}\label{SERR(-1)}
\chi(\mathcal{S}^{\vee}\otimes \mathcal{E}(-1))=
4-2c_2h+c_3.
\end{equation}

\section{Key Lemmas}\label{Key Lemmas}
\begin{lemma}\label{S2nukelemma}
We have the following exact sequence on $\mathbb{Q}^3$:
\begin{equation}\label{OneOfKeyExactSeq}
0\to T_{\mathbb{P}^4}(-2)|_{\mathbb{Q}^3}
\to \mathcal{S}^{\vee \oplus 4}\to \Omega_{\mathbb{P}^4}(1)|_{\mathbb{Q}^3}\to 0.
\end{equation}
\end{lemma}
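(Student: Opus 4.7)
My plan is to invoke the Bondal equivalence $\RHom(G,-)\colon D^b(\mathbb{Q}^3)\xrightarrow{\sim} D^b(\module A)$ recalled in Section~\ref{Preliminaries} and to construct the desired exact sequence on the $A$-module side. First I would compute $\RHom(G,\Omega_{\mathbb{P}^4}(1)|_{\mathbb{Q}^3})$ component by component: tensoring the restricted Euler sequence
\[0\to \Omega_{\mathbb{P}^4}(1)|_{\mathbb{Q}^3}\to \mathcal{O}^{\oplus 5}\to \mathcal{O}(1)\to 0\]
successively by $\mathcal{O}$, $\mathcal{S}^{\vee}$, $\mathcal{O}(-1)$, and $\mathcal{O}(-2)$ and then applying $R\Gamma$, together with the vanishings $R\Gamma(\mathcal{O}(-j))=0$ for $j=1,2$, $h^q(\mathcal{S}(-i))=0$ for $i=1,2,3$ (all $q$), and $R\Gamma(\mathcal{S}^{\vee})=0$ collected in Section~\ref{PropertyOfSpinors}, I expect to find that $\RHom(G,\Omega_{\mathbb{P}^4}(1)|_{\mathbb{Q}^3})\cong M[-1]$ in $D^b(\module A)$, where $M$ is a right $A$-module concentrated in degree zero with $\dim_K Me_1=4$, $\dim_K Me_2=1$, and $Me_0=Me_3=0$.

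Since $Me_0=0$, the ascending filtration recalled in Section~\ref{Preliminaries} supplies a short exact sequence
\[0\to S_1^{\oplus 4}\to M\to S_2\to 0\]
of right $A$-modules. Applying the quasi-inverse $-\lotimes_A G$ to the associated distinguished triangle and using the identifications $S_1\lotimes_A G\cong \mathcal{S}^{\vee}[1]$ and $S_2\lotimes_A G\cong T_{\mathbb{P}^4}(-2)|_{\mathbb{Q}^3}[2]$ provided by Lemma~\ref{S2Arilemma}, I obtain after one rotation and a shift by $-1$ a distinguished triangle
\[T_{\mathbb{P}^4}(-2)|_{\mathbb{Q}^3}\to \mathcal{S}^{\vee\oplus 4}\to \Omega_{\mathbb{P}^4}(1)|_{\mathbb{Q}^3}\to T_{\mathbb{P}^4}(-2)|_{\mathbb{Q}^3}[1]\]
in $D^b(\mathbb{Q}^3)$. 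Since all three of its main terms are sheaves concentrated in cohomological degree zero, the associated long exact sequence of cohomology sheaves collapses to the desired exact sequence \eqref{OneOfKeyExactSeq}.

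The hard part will be the Ext computation itself: each of the four cases reduces to a routine long exact sequence argument from the Euler sequence and the cohomology vanishings, but one must carefully track all cohomological degrees to confirm concentration in degree one with the stated dimension vector. Once $M$ is pinned down, everything else is a formal consequence of Bondal's theorem and Lemma~\ref{S2Arilemma}.
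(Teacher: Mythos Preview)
Your argument is correct, and it takes a genuinely different route from the paper's own proof. You work on the side of $\Omega_{\mathbb{P}^4}(1)|_{\mathbb{Q}^3}$: compute $\RHom(G,\Omega_{\mathbb{P}^4}(1)|_{\mathbb{Q}^3})\cong M[-1]$ directly from the restricted Euler sequence, read off the filtration $0\to S_1^{\oplus 4}\to M\to S_2\to 0$, and transport it back via Bondal's equivalence and Lemma~\ref{S2Arilemma}. The paper instead works on the side of $T_{\mathbb{P}^4}(-2)|_{\mathbb{Q}^3}$: it builds an explicit projective resolution $0\to P_0^{\oplus 11}\to P_1^{\oplus 4}\to P_2\to S_2\to 0$ (invoking Lemma~\ref{wedgeproductOfS} to check surjectivity), tensors with $G$ to obtain the four-term sequence $0\to T_{\mathbb{P}^4}(-2)|_{\mathbb{Q}^3}\to \mathcal{O}^{\oplus 11}\to \mathcal{S}^{\oplus 4}\to \mathcal{O}(1)\to 0$, and then extracts \eqref{OneOfKeyExactSeq} by a diagram chase against the Euler sequence and \eqref{SSdual}. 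Your approach is shorter and bypasses both Lemma~\ref{wedgeproductOfS} and the diagram chase; the paper's approach is more explicit and yields as a by-product the four-term resolution of $T_{\mathbb{P}^4}(-2)|_{\mathbb{Q}^3}$ and a concrete description of the maps, which is convenient for the later analyses in Section~\ref{Key Lemmas}.
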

\begin{proof}
We will show \eqref{OneOfKeyExactSeq} by 
showing that 
$T_{\mathbb{P}^4}(-2)|_{\mathbb{Q}^3}$ fits in the following exact sequence:
\[
0\to T_{\mathbb{P}^4}(-2)|_{\mathbb{Q}^3}\to \mathcal{O}^{\oplus 11}\to \mathcal{S}^{\oplus 4}\to \mathcal{O}(1)\to 0.
\]
As we already know that $S_2\lotimes_AG\cong T_{\mathbb{P}^4}(-2)|_{\mathbb{Q}^3}[2]$,
it is enough to show that 
$S_2\lotimes_AG$ is nothing but the following complex:
\[
0\to 
\mathcal{O}^{\oplus 11}
\to 
\mathcal{S}^{\oplus 4}
\to \mathcal{O}(1)\to 0,
\]
where $\mathcal{O}(1)$ lies in degree zero.
To show this, we claim that 
$S_2$ has the following projective resolution of right $A$-modules:
\[0\to P_0^{\oplus 11}\to 
\Hom(\mathcal{S},\mathcal{O}(1))\otimes_K P_1
\to P_2\to S_2\to 0.\] 
Indeed, it is clear that we have a surjection $P_2\to S_2$. 
Since the kernel $N$ of the surjection $P_2\to S_2$ is isomorphic to 
$\Hom(\mathcal{S},\mathcal{O}(1))\oplus \Hom(\mathcal{O},\mathcal{O}(1))$
as $K$-vector spaces,
we have a natural homomorphism 
$\Hom(\mathcal{S},\mathcal{O}(1))\otimes_K P_1\to N$.
To show that this homomorphism is surjective, it is enough to show that the composition
\[\Hom(\mathcal{S},\mathcal{O}(1))\otimes \Hom(\mathcal{O},\mathcal{S})
\to \Hom(\mathcal{O},\mathcal{O}(1))\]
is surjectve. 
Note that 
the map sending $s$ to $s\wedge\bullet$ induces an isomorphism
\[\mathcal{S}\cong \calhom(\mathcal{S},\mathcal{O}(1)).\]
Hence
the composition above sends 
$(s\wedge\bullet)\otimes t$
to $s\wedge t$,
and 
it is surjective
by Lemma~\ref{wedgeproductOfS}.
Therefore 
the natural homomorphism 
$\Hom(\mathcal{S},\mathcal{O}(1))\otimes_K P_1
\to N$ is surjective.
Now the kernel of the 
surjection 
$\Hom(\mathcal{S},\mathcal{O}(1))\otimes_K P_1
\to N$ is isomorphic to the kernel $M$ of the 
composition above
as $K$-vector spaces,
and it is isomorphic to  
$M\otimes_K P_0$
as right $A$-modules.
Note here 
that $\dim_KM=11$.
Hence $S_2$ has the projective resolution as above.
Since $P_i\otimes_AG\cong G_i$, this implies that 
$S_2\lotimes_AG$ is nothing but the desired complex 
\[
0\to 
M\otimes \mathcal{O}
\to 
\Hom(\mathcal{S},\mathcal{O}(1))\otimes\mathcal{S}
\to \mathcal{O}(1)\to 0.
\]

Now we have the following exact sequence:
\[
0\to 
T_{\mathbb{P}^4}(-2)|_{\mathbb{Q}^3}
\to 
M\otimes \mathcal{O}
\to 
\Hom(\mathcal{S},\mathcal{O}(1))\otimes\mathcal{S}
\to \mathcal{O}(1)\to 0.
\]
Let  $\mathcal{G}$ be the kernel of the morphism
$\Hom(\mathcal{S},\mathcal{O}(1))\otimes\mathcal{S}
\to \mathcal{O}(1)$.
Then we have the following commutative diagram with exact rows:
\[
\xymatrix{
0\ar[r]&M\otimes\mathcal{O}\ar[d]\ar[r] &\Hom(\mathcal{S},\mathcal{O}(1))\otimes H^0(\mathcal{S})\otimes \mathcal{O} \ar[d]\ar[r] &H^0(\mathcal{O}(1))\otimes \mathcal{O}\ar[r]\ar[d]    & 0    \\
0\ar[r]&\mathcal{G} \ar[r]      &\Hom(\mathcal{S},\mathcal{O}(1))\otimes\mathcal{S}\ar[r]        &\mathcal{O}(1)\ar[r]              & 0    
}
\]
Note that the kernel of the middle vertical surjection is $\mathcal{S}^{\vee \oplus 4}$
by \eqref{SSdual}.
Since the kernel of the right vertical surjection is $\Omega_{\mathbb{P}^4}|_{\mathbb{Q}^3}$,
we obtain the desired exact sequence.
\end{proof}

\begin{lemma}\label{coevaluation}
On $\mathbb{Q}^3$, we have isomorphisms
\[\Hom(T_{\mathbb{P}^4}(-2)|_{\mathbb{Q}^3},\mathcal{S}^{\vee})
\cong H^0(\Omega_{\mathbb{P}^4}(2)|_{\mathbb{Q}^3}\otimes\mathcal{S}^{\vee})
\cong H^0(\Omega_{\mathbb{P}^4}(1)|_{\mathbb{Q}^3}\otimes\mathcal{S})
\cong 
\Hom(\mathcal{S}^{\vee},\Omega_{\mathbb{P}^4}(1)|_{\mathbb{Q}^3}),\]
and these vector spaces have dimension four.
Moreover we can regard the morphism $T_{\mathbb{P}^4}(-2)|_{\mathbb{Q}^3}
\to \mathcal{S}^{\vee \oplus 4}$ in \eqref{OneOfKeyExactSeq}
as the coevaluation morphism 
\[T_{\mathbb{P}^4}(-2)|_{\mathbb{Q}^3}
\to \mathcal{S}^{\vee}\otimes 
\Hom(T_{\mathbb{P}^4}(-2)|_{\mathbb{Q}^3},\mathcal{S}^{\vee})^{\vee}.\]
\end{lemma}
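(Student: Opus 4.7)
The plan is to handle the four isomorphisms by standard Hom-tensor adjunction together with the canonical isomorphism $\mathcal{S}^{\vee}(1)\cong\mathcal{S}$ of \eqref{canonicalIsom}. Indeed, $\Hom(T_{\mathbb{P}^4}(-2)|_{\mathbb{Q}^3},\mathcal{S}^{\vee})\cong H^0(\Omega_{\mathbb{P}^4}(2)|_{\mathbb{Q}^3}\otimes\mathcal{S}^{\vee})$ because $(T_{\mathbb{P}^4}(-2))^{\vee}=\Omega_{\mathbb{P}^4}(2)$; tensoring with $\mathcal{S}^{\vee}(1)\cong\mathcal{S}$ supplies the middle isomorphism with $H^0(\Omega_{\mathbb{P}^4}(1)|_{\mathbb{Q}^3}\otimes\mathcal{S})$, and this last space equals $\Hom(\mathcal{S}^{\vee},\Omega_{\mathbb{P}^4}(1)|_{\mathbb{Q}^3})$ by adjunction applied to $\mathcal{S}^{\vee}$.

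For the common dimension I would dualize and twist the Euler sequence on $\mathbb{P}^4$ to obtain $0\to\Omega_{\mathbb{P}^4}(2)\to\mathcal{O}(1)^{\oplus 5}\to\mathcal{O}(2)\to 0$, restrict it to $\mathbb{Q}^3$, and tensor with $\mathcal{S}^{\vee}$, so that \eqref{canonicalIsom} yields
\[0\to\Omega_{\mathbb{P}^4}(2)|_{\mathbb{Q}^3}\otimes\mathcal{S}^{\vee}\to\mathcal{S}^{\oplus 5}\to\mathcal{S}(1)\to 0.\]
Using $h^0(\mathcal{S})=4$, $h^1(\mathcal{S})=0$, and the value $h^0(\mathcal{S}(1))=16$ obtained by twisting \eqref{SSdual} and applying \eqref{canonicalIsom}, the cohomology long exact sequence gives the desired dimension $5\cdot 4-16=4$ provided the multiplication $H^0(\mathcal{S})^{\oplus 5}\to H^0(\mathcal{S}(1))$ is surjective. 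This surjectivity I would deduce by comparing with the surjection $H^0(\mathcal{O}(1)^{\oplus 4})\twoheadrightarrow H^0(\mathcal{S}(1))$ coming from the twist of \eqref{SSdual}, together with the identification $H^0(\mathcal{O}^{\oplus 4})\cong H^0(\mathcal{S})$.

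To identify the morphism in \eqref{OneOfKeyExactSeq} with the coevaluation, the plan is to apply $\Hom(-,\mathcal{S}^{\vee})$ to \eqref{OneOfKeyExactSeq}. The term $\Hom(\mathcal{S}^{\vee\oplus 4},\mathcal{S}^{\vee})\cong K^{\oplus 4}$ maps to the $4$-dimensional space $\Hom(T_{\mathbb{P}^4}(-2)|_{\mathbb{Q}^3},\mathcal{S}^{\vee})$ by sending each coordinate projection to the corresponding component of the morphism. Once this map is shown to be injective, it is automatically an isomorphism, the four components form a basis of $\Hom(T_{\mathbb{P}^4}(-2)|_{\mathbb{Q}^3},\mathcal{S}^{\vee})$, and the morphism becomes the coevaluation after identifying $\mathcal{S}^{\vee\oplus 4}$ with $\mathcal{S}^{\vee}\otimes\Hom(T_{\mathbb{P}^4}(-2)|_{\mathbb{Q}^3},\mathcal{S}^{\vee})^{\vee}$ via the dual basis.

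The main obstacle is therefore the vanishing $\Hom(\Omega_{\mathbb{P}^4}(1)|_{\mathbb{Q}^3},\mathcal{S}^{\vee})=H^0(T_{\mathbb{P}^4}(-1)|_{\mathbb{Q}^3}\otimes\mathcal{S}^{\vee})=0$. I plan to get this by twisting the Euler sequence by $-1$, restricting to $\mathbb{Q}^3$, and tensoring with $\mathcal{S}^{\vee}$ to arrive at
\[0\to\mathcal{S}^{\vee}(-1)\to\mathcal{S}^{\vee\oplus 5}\to T_{\mathbb{P}^4}(-1)|_{\mathbb{Q}^3}\otimes\mathcal{S}^{\vee}\to 0,\]
and then invoking $h^q(\mathcal{S}^{\vee}(-1))=h^q(\mathcal{S}(-2))=0$ for every $q$ together with $h^0(\mathcal{S}^{\vee})=h^0(\mathcal{S}(-1))=0$, both of which are recalled in Section~\ref{PropertyOfSpinors}.
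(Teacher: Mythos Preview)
Your proposal is correct and follows essentially the same route as the paper: both compute the dimension via the Euler sequence tensored with $\mathcal{S}^{\vee}$ and reduce the coevaluation claim to the vanishing $\Hom(\Omega_{\mathbb{P}^4}(1)|_{\mathbb{Q}^3},\mathcal{S}^{\vee})=0$. The only cosmetic differences are that the paper obtains the surjectivity of $H^0(\mathcal{S})\otimes H^0(\mathcal{O}(1))\to H^0(\mathcal{S}(1))$ from the $0$-regularity of $\mathcal{S}$ rather than your diagram chase through \eqref{SSdual}, and that the paper simply asserts the vanishing $\Hom(\Omega_{\mathbb{P}^4}(1)|_{\mathbb{Q}^3},\mathcal{S}^{\vee})=0$ while you spell it out via the Euler sequence.
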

\begin{proof}
Since we have an exact sequence
\[
0\to \Omega_{\mathbb{P}^4}(2)|_{\mathbb{Q}^3}\to \mathcal{O}(1)\otimes H^0(\mathcal{O}(1))
\to \mathcal{O}(2)\to 0,
\]
we have the following exact sequence:
\[
0\to \Omega_{\mathbb{P}^4}(2)|_{\mathbb{Q}^3}\otimes\mathcal{S}^{\vee}
\to \mathcal{S}\otimes H^0(\mathcal{O}(1))
\to \mathcal{S}(1)\to 0.
\]
Note here that $\mathcal{S}$ is $0$-regular, since $h^q(\mathcal{S}(-i))=0$
for $i=1,2,3$ and all $q$.
Hence the natural map 
$H^0(\mathcal{S})\otimes H^0(\mathcal{O}(1))
\to H^0(\mathcal{S}(1))$ is surjective.
Moreover $h^0(\mathcal{S}(1))=16$ by 
\eqref{SobokuRRwithSpinor}.
Therefore $h^0(\Omega_{\mathbb{P}^4}(2)|_{\mathbb{Q}^3}\otimes\mathcal{S}^{\vee})=4$.

If the composite of the morphism $T_{\mathbb{P}^4}(-2)|_{\mathbb{Q}^3}
\to \mathcal{S}^{\vee \oplus 4}$ in \eqref{OneOfKeyExactSeq}
and some projection $\mathcal{S}^{\vee \oplus 4}\to \mathcal{S}^{\vee}$ is zero,
then $\Omega_{\mathbb{P}^4}(1)|_{\mathbb{Q}^3}$ admits $\mathcal{S}^{\vee}$ as a quotient,
which contradicts the fact that 
$\Hom (\Omega_{\mathbb{P}^4}(1)|_{\mathbb{Q}^3}, \mathcal{S}^{\vee})=0$.
Hence every element of 
$\Hom(T_{\mathbb{P}^4}(-2)|_{\mathbb{Q}^3},\mathcal{S}^{\vee})$
is obtained as the composite of 
$T_{\mathbb{P}^4}(-2)|_{\mathbb{Q}^3}\to \mathcal{S}^{\vee \oplus 4}$ 
and some projection $\mathcal{S}^{\vee \oplus 4}\to \mathcal{S}^{\vee}$.
\end{proof}

\begin{lemma}\label{NegativeQuotinetOnLineAri}
Let $\varphi:\mathcal{S}^{\vee}\to \Omega_{\mathbb{P}^4}(1)|_{\mathbb{Q}^3}$
be a morphism of $\mathcal{O}_{\mathbb{Q}^3}$-modules.
If $\varphi\neq 0$, then $\varphi$ is injective, and
there exists a line $L$ on $\mathbb{Q}^3$ such that
the restriction $\Coker(\varphi)|_L$ to $L$ of the cokernel $\Coker(\varphi)$ of $\varphi$ 
admits a negative degree quotient.
\end{lemma}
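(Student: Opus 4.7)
The plan is to prove the two assertions by contradiction, using Lemma~\ref{S2nukelemma}, Lemma~\ref{coevaluation}, and intersection theory on $\mathbb{Q}^3$.

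For injectivity, suppose $\varphi\neq 0$ has nonzero kernel. Then $\ker\varphi$ is rank one torsion-free in $\mathcal{S}^{\vee}$, so its reflexive hull is a line bundle $\mathcal{O}(a)\hookrightarrow \mathcal{S}^{\vee}$; the existence of a nonzero map $\mathcal{O}(a)\to \mathcal{S}^{\vee}$ is equivalent to $H^0(\mathcal{S}(-1-a))\neq 0$, and hence to $a\le -1$. The image of $\varphi$ is then a rank one torsion-free sheaf of the form $\mathcal{O}(b)\otimes \mathcal{I}_Z$ with $b=-1-a\ge 0$ and $Z$ of codimension $\ge 2$; by Hartogs its embedding in the locally free sheaf $\Omega_{\mathbb{P}^4}(1)|_{\mathbb{Q}^3}$ extends to a nonzero section of $\Omega_{\mathbb{P}^4}(1-b)|_{\mathbb{Q}^3}$. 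From the twisted restricted Euler sequence $0\to \Omega_{\mathbb{P}^4}(1-b)|_{\mathbb{Q}^3}\to \mathcal{O}(-b)^{\oplus 5}\to \mathcal{O}(1-b)\to 0$ one sees $H^0(\Omega_{\mathbb{P}^4}(1-b)|_{\mathbb{Q}^3})=0$ for every $b\ge 0$, a contradiction.

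For the existence of $L$, set $\mathcal{C}=\Coker(\varphi)$. Using $c(\mathcal{S}^{\vee})=1-h+l$ and $c(\Omega_{\mathbb{P}^4}(1)|_{\mathbb{Q}^3})=1/(1+h)=1-h+2l-2$, together with $h^2=2l$, $h^3=2$, $l^2=0$, a direct computation from the defining exact sequence yields $c_1(\mathcal{C})=0$, $c_2(\mathcal{C})=l$, and $c_2(\mathcal{C})\cdot h=1$. Assume for contradiction that $\mathcal{C}|_L$ admits no negative degree quotient for every line $L$. The degeneracy locus of $\varphi$ has expected codimension $(2-1)(4-1)=3$, hence is finite; through every point $x\in \mathbb{Q}^3$ there is a line $L$ on which $\varphi$ is fiberwise injective, and on such an $L$ the sheaf $\mathcal{C}|_L$ is a rank two vector bundle of degree zero. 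The hypothesis forces $\mathcal{C}|_L\cong \mathcal{O}_L^{\oplus 2}$, whence $\dim_K \mathcal{C}\otimes k(x)=2$ everywhere, so $\mathcal{C}$ is locally free of rank two. For a generic smooth hyperplane section $\mathbb{Q}^2\cong \mathbb{P}^1\times \mathbb{P}^1$ disjoint from the degeneracy locus, $\mathcal{C}|_{\mathbb{Q}^2}$ is a rank two vector bundle trivial on every line of both rulings; the standard splitting criterion on $\mathbb{P}^1\times \mathbb{P}^1$ (cohomology and base change along the two projections) yields $\mathcal{C}|_{\mathbb{Q}^2}\cong \mathcal{O}_{\mathbb{Q}^2}^{\oplus 2}$, so $c_2(\mathcal{C}|_{\mathbb{Q}^2})=0$. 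This contradicts $c_2(\mathcal{C}|_{\mathbb{Q}^2})=c_2(\mathcal{C})\cdot h=1$.

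The main obstacle is the reduction in the second part: converting the sheaf-theoretic hypothesis ``no negative degree quotient on every line'' into the clean statement $\mathcal{C}|_L\cong \mathcal{O}_L^{\oplus 2}$ on a dense family of lines, and thereby showing that $\mathcal{C}$ is actually a vector bundle. The codimension estimate on the degeneracy locus of $\varphi$ is what makes this work. Once $\mathcal{C}$ is a rank two bundle with trivial restriction to every line, the Van de Ven-style splitting on a generic $\mathbb{Q}^2$ together with the explicit intersection number $c_2(\mathcal{C})\cdot h=1$ finishes the argument.
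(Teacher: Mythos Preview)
Your injectivity argument is correct. The gap lies in the second part. The claim that the degeneracy locus $D_1(\varphi)$ has the expected codimension $(2-1)(4-1)=3$ is not justified: expected codimension is achieved only for generic morphisms, while the lemma treats an \emph{arbitrary} nonzero $\varphi$, and nothing you have said rules out $D_1(\varphi)$ being a curve or a surface. Even granting finiteness of $D_1(\varphi)$, your next step---that through \emph{every} $x\in\mathbb{Q}^3$ there is a line on which $\varphi$ is fibrewise injective---fails at the points of $D_1(\varphi)$ themselves, so the conclusion $\dim_K\mathcal{C}\otimes k(x)=2$ everywhere does not follow. Your own closing paragraph identifies exactly this step as the crux, but the ``codimension estimate'' you invoke is not available.

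The approach can be repaired by showing that the contradiction hypothesis forces $D_1(\varphi)=\emptyset$. If $p\in D_1(\varphi)$ and $L$ is a line through $p$ with $L\not\subset D_1(\varphi)$, then the restricted sequence $0\to\mathcal{S}^{\vee}|_L\to\Omega_{\mathbb{P}^4}(1)|_{\mathbb{Q}^3}|_L\to\mathcal{C}|_L\to 0$ remains exact (the Tor term maps to a locally free sheaf and is torsion, hence vanishes), while $\mathcal{C}|_L$ acquires nonzero torsion at $p$; on $\mathbb{P}^1$ one has $\mathcal{C}|_L\cong T\oplus F$ with $F$ locally free of degree $-\length(T)<0$, and the projection to a summand of $F$ gives a negative degree quotient. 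It then remains to check that some line through $p$ lies outside $D_1(\varphi)$: otherwise $D_1(\varphi)$ contains the tangent hyperplane section $T_p\mathbb{Q}^3\cap\mathbb{Q}^3$, and iterating over its points forces $D_1(\varphi)=\mathbb{Q}^3$, contradicting the injectivity of $\varphi$. With $\mathcal{C}$ locally free, your Chern-class contradiction on a smooth hyperplane section is valid.

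For comparison, the paper takes an explicit route: it composes $\varphi$ with the Euler inclusion $\Omega_{\mathbb{P}^4}(1)|_{\mathbb{Q}^3}\hookrightarrow\mathcal{O}^{\oplus 5}$, writes the result as $\sum l_i\otimes s_i^{\vee}$ with the $s_i\in H^0(\mathcal{S})$ linearly independent, and shows by case analysis that exactly three terms occur. This yields the concrete presentation $0\to\Coker(\varphi)\to\mathcal{I}_M(1)\oplus\mathcal{O}^{\oplus 2}\to\mathcal{O}(1)\to 0$ for a specific line $M$, from which the desired $L$ is constructed by hand. That structural description of $\Coker(\varphi)$ is reused verbatim in the proofs of Lemmas~\ref{SupposedVeryImportant} and~\ref{SupposedVeryImportantPart2}; your intersection-theoretic approach, once repaired, is more conceptual but does not supply this by-product.
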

\begin{proof}
We have an exact sequence
\[0\to \Omega_{\mathbb{P}^4}(1)|_{\mathbb{Q}^3}
\xrightarrow{i} H^0(\mathcal{O}(1))\otimes \mathcal{O}\to \mathcal{O}(1)\to 0,\]
and the composite $i \circ\varphi$ can be written as 
\[i \circ\varphi
=\sum_{i=1}^r  l_i\otimes s_i^{\vee}\]
for some $l_i\in H^0(\mathcal{O}(1))$
and $s_i\in H^0(\mathcal{S})$,
where $s_i^{\vee}$ denotes the dual of the morphism $\mathcal{O}\to \mathcal{S}$
determined by $s_i$.
We may assume that $l_i\neq 0$ for all $i$.
By replacing $l_i$ if necessary, we may further assume that $s_1,\dots,s_r$ are linearly independent.
Since $h^0(\mathcal{S})=4$, we have $r\leq 4$.
Note that $\sum_{i=1}^r l_is_i^{\vee}=0$ in $\Hom (\mathcal{S}^{\vee},\mathcal{O}(1))$.
Hence $r\geq 2$. Moreover we have a surjective morphism 
\[\psi:\Coker(i\circ \varphi)\to \mathcal{O}(1).\]
Note that the morphism $\mathcal{O}^{\oplus r}\to \mathcal{S}$ determined by 
$(s_1,\dots,s_r)$ is generically surjective.
Hence we see that $i\circ \varphi$ is injective.
Therefore $\varphi$ is injective and 
\[\Coker(\varphi)\cong \Ker (\psi).\]

If $r=2$, then $\Coker(i\circ \varphi)\cong \mathcal{T}\oplus \mathcal{O}^{\oplus 3}$
for some torsion sheaf $\mathcal{T}$ on $\mathbb{Q}^3$.
Since $\mathcal{O}(1)$ is torsion-free, $\psi$ maps $\mathcal{T}$ to zero,
and we have a surjective morphism $\bar{\psi}:\mathcal{O}^{\oplus 3}\to \mathcal{O}(1)$.
On the other hand, $\bar{\psi}:\mathcal{O}^{\oplus 3}\to \mathcal{O}(1)$ cannot be surjecitve
since three hyperplane sections of $\mathbb{Q}^3$ always meet at a point. This is a contradiction.
Hence $r=3$ or $4$. Suppose that $r=4$. Then 
it follows from the exact sequence \eqref{SSdual}
that 
$\Coker(i\circ \varphi)\cong \mathcal{S}\oplus \mathcal{O}$.
Note that $\psi$ induces a morphism $\mathcal{S}\to \mathcal{O}(1)$,
which factors through $\mathcal{I}_L(1)$ 
for some line $L$ in $\mathbb{Q}^3$.
Since $L$ and a hyperplane in $\mathbb{Q}^3$ meet at a point, $\psi$ cannot be surjective.
Hence the case $r=4$ does not arise, and we have $r=3$.

Now 
it follows from the exact sequence \eqref{SSdual}
that the cokernel of the morphism 
determined by ${}^t(s_1^{\vee},s_2^{\vee},s_3^{\vee}):
\mathcal{S}^{\vee}\to \mathcal{O}^{\oplus 3}$
is isomorphic to the cokernel of some non-zero morphism 
$\mathcal{O}\to \mathcal{S}$, and hence it is isomorphic to $\mathcal{I}_M(1)$
for some line $M$ on $\mathbb{Q}^3$.
Therefore $\Coker(i\circ \varphi)\cong \mathcal{I}_M(1)\oplus \mathcal{O}^{\oplus 2}$,
and we have the following exact sequence:
\begin{equation}\label{CokerVarphiInc}
0\to \Coker(\varphi)\xrightarrow{i} \mathcal{I}_M(1)\oplus \mathcal{O}^{\oplus 2}\xrightarrow{\psi} \mathcal{O}(1)\to 0.
\end{equation}
Let $\mathbb{Q}^2$ be a general hyperplane section of $\mathbb{Q}^3$ containing $M$.
We may assume that $M$ is a divisor of type $(1,0)$ of $\mathbb{Q}^2$.
Then $\mathcal{I}_M(1)$ fits in the following exact sequence:
\[0\to \mathcal{O}_{\mathbb{Q}^3}\to \mathcal{I}_M(1)\to \mathcal{O}_{\mathbb{Q}^2}(0,1)\to 0.\]
By pulling back the sequence above 
to  a line $L$ of type $(0,1)$ in $\mathbb{Q}^2$, we obtain the following exact sequence:
\[
\mathcal{O}_L\to \mathcal{I}_M(1)\otimes \mathcal{O}_L\to \mathcal{O}_L\to 0.
\]
The image of $\mathcal{O}_L\to \mathcal{I}_M(1)\otimes \mathcal{O}_L$ is the torsion part of $\mathcal{I}_M(1)\otimes\mathcal{O}_L$.
Therefore $\psi\otimes 1_L$ factors through $\mathcal{O}_L^{\oplus 3}$ and induces
a surjection $\mathcal{O}_L^{\oplus 3}\to \mathcal{O}_L(1)$.
Hence $\Coker(\varphi)\otimes\mathcal{O}_L$ has $\mathcal{O}_L(-1)\oplus \mathcal{O}_L$ as a quotient.
\end{proof}

\begin{lemma}\label{CannotBeSurjective}
Let $\psi_1:T_{\mathbb{P}^4}(-2)|_{\mathbb{Q}^3}
\to \mathcal{S}^{\vee}$ be a morphism of $\mathcal{O}_{\mathbb{Q}^3}$-modules.
Then $\psi_1$ cannot be surjective. 
\end{lemma}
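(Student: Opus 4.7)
My plan is to argue by contradiction using multiplicativity of the Chern polynomial. If $\psi_1$ were surjective, then $K:=\ker\psi_1$ would be a vector bundle of rank $4-2=2$ on $\mathbb{Q}^3$ (the kernel of a surjection between vector bundles is locally free), so in particular $c_3(K)=0$. I will derive a contradiction by computing $c_3(K)$ from the relation $c_t(K)\cdot c_t(\mathcal{S}^\vee)=c_t(T_{\mathbb{P}^4}(-2)|_{\mathbb{Q}^3})$ dictated by the short exact sequence $0\to K\to T_{\mathbb{P}^4}(-2)|_{\mathbb{Q}^3}\to\mathcal{S}^\vee\to 0$.

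From the Euler sequence tensored with $\mathcal{O}(-1)$,
\[
0\to\mathcal{O}(-2)\to\mathcal{O}(-1)^{\oplus 5}\to T_{\mathbb{P}^4}(-2)|_{\mathbb{Q}^3}\to 0,
\]
one has $c_t(T_{\mathbb{P}^4}(-2)|_{\mathbb{Q}^3})=(1-ht)^5/(1-2ht)$, which simplifies in $A^*\mathbb{Q}^3$ (using $h^2=2l$ and $h^3=2$) to $1-3ht+8lt^2-4t^3$. Section~\ref{PropertyOfSpinors} gives $c_t(\mathcal{S}^\vee)=1-ht+lt^2$, whose formal inverse (using also $hl=1$ and $l^2=0$) is $1+ht+lt^2$. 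Multiplying,
\[
c_t(K)=(1-3ht+8lt^2-4t^3)(1+ht+lt^2)=1-2ht+3lt^2+t^3,
\]
so $c_3(K)=1\ne 0$. This contradicts the vanishing of $c_3$ for a rank-two bundle, and hence $\psi_1$ cannot be surjective.

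The only real obstacle is tracking the Chow-ring arithmetic in $A^*\mathbb{Q}^3$ carefully; conceptually the argument is simply a consequence of Chern polynomial multiplicativity, and neither Lemma~\ref{S2nukelemma} nor Lemma~\ref{NegativeQuotinetOnLineAri} is needed for this particular statement.
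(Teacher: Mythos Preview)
Your proof is correct and takes a genuinely different route from the paper's. The paper argues geometrically: dualizing a hypothetical surjection $\psi_1$ makes $\mathcal{S}$ a sub-bundle of $\Omega_{\mathbb{P}^4}(2)|_{\mathbb{Q}^3}\subset\mathcal{O}(1)^{\oplus 5}$; the quotient is identified (via the sequence \eqref{SSdual} twisted by $\mathcal{O}(1)$) with $\mathcal{S}(1)\oplus\mathcal{O}(1)$; the Euler sequence then forces a surjection $\mathcal{S}(1)\oplus\mathcal{O}(1)\to\mathcal{O}(2)$, whose dual twist would make $\mathcal{O}$ a sub-bundle of $\mathcal{S}\oplus\mathcal{O}(1)$, impossible because a line and a hyperplane always meet. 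Your argument instead just computes $c_3(\ker\psi_1)$ from the Whitney formula and finds it equals $1$, which is impossible for a rank-two bundle. I checked your Chow-ring arithmetic and it is right: $(1-ht)^5/(1-2ht)=1-3ht+8lt^2-4t^3$, the inverse of $1-ht+lt^2$ really is $1+ht+lt^2$ (the cross terms cancel via $h^2=2l$ and $l^2=0$), and the product gives $c_3=-3hl+8hl-4=1$. Your approach is shorter and avoids the somewhat delicate identification of $\mathcal{O}(1)^{\oplus 5}/\mathcal{S}$; the paper's approach, on the other hand, gives more structural information about where the obstruction lives, which is in the spirit of the surrounding lemmas. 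Either proof is perfectly adequate for this statement.
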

\begin{proof}
Suppose, to the contrary, that $\psi_1$ is surjective. 
Then $\mathcal{S}$ is a sub-bundle of $\Omega_{\mathbb{P}^4}(2)|_{\mathbb{Q}^3}$,
and consequently a sub-bundle of $\mathcal{O}(1)^{\oplus 5}$.
Since the quotient $\mathcal{O}(1)^{\oplus 5}/\mathcal{S}$ is a vector bundle,
it is isomorphic to $\mathcal{S}(1)\oplus \mathcal{O}(1)$
by \eqref{SSdual}, and the Euler sequence induces a surjection
$\mathcal{S}(1)\oplus \mathcal{O}(1)\to \mathcal{O}(2)$.
By taking the dual of the surjection and twisting it by $\mathcal{O}(2)$, 
we see that 
$\mathcal{O}$ can be a sub-bundle of $\mathcal{S}\oplus \mathcal{O}(1)$. 
On the other hand, a non-zero section $s$ of $H^0(\mathcal{S})$ defines a line $L$,
and $L$ and a hyperplane $H$ always meet at a point. Hence 
$\mathcal{O}$ cannot be a sub-bundle of $\mathcal{S}\oplus \mathcal{O}(1)$. 
This is a contradiction. Hence $\psi_1$ cannot be surjective. 
\end{proof}

Recall here that, for a coherent sheaf $\mathcal{F}$ of codimension $\geq p+1$
on a nonsingular projective variety $X$,
we have $c_i(\mathcal{F})=0$ for all $1\leq i\leq p$ (see, 
e.g.,
\cite[Example 15.3.6]{fl}).
Lemma~\ref{SupposedVeryImportant} below will be applied to $\psi_a$ in 
\eqref{E_2^-1,1resolutionInCaseh1E(-2)=1,c_3=0}
and \eqref{OneOfApplicationOfpsi},
and plays an crucial role in our proof of Theorem~\ref{Chern2}. 
\begin{lemma}\label{SupposedVeryImportant}
Let $\psi_a:T_{\mathbb{P}^4}(-2)|_{\mathbb{Q}^3}
\to \mathcal{S}^{\vee \oplus a}$ be a morphism of $\mathcal{O}_{\mathbb{Q}^3}$-modules
where $a$ is a positive integer. Then 
the restriction $\Coker(\psi_a)|_L$ of the cokernel $\Coker(\psi_a)$ of $\psi_a$ 
to some line  $L$ on $\mathbb{Q}^3$
admits a negative degree quotient.
In particular, if $\psi_1\neq 0$, then $\psi_1$ fits in the following exact sequence:
\[0\to \mathcal{O}(-1)^{\oplus 2}\to 
T_{\mathbb{P}^4}(-2)|_{\mathbb{Q}^3}\xrightarrow{\psi_1} \mathcal{S}^{\vee}
\to \mathcal{O}_L(-1)\to 0.\]
\end{lemma}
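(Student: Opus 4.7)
My approach is to factor $\psi_a$ through the coevaluation map of \eqref{OneOfKeyExactSeq}, use the snake (kernel--cokernel) lemma to convert the question into one about morphisms from $\mathcal{S}^{\vee \oplus k}$ into $\Omega_{\mathbb{P}^4}(1)|_{\mathbb{Q}^3}$, and then extend the argument of Lemma~\ref{NegativeQuotinetOnLineAri} to handle the multiple-source cases.

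By Lemma~\ref{coevaluation}, every morphism $T_{\mathbb{P}^4}(-2)|_{\mathbb{Q}^3} \to \mathcal{S}^{\vee}$ is a $K$-linear combination of the four components of the coevaluation $\iota$. Hence $\psi_a = \pi \circ \iota$ for a morphism $\pi \colon \mathcal{S}^{\vee \oplus 4} \to \mathcal{S}^{\vee \oplus a}$ determined by an $a \times 4$ scalar matrix of rank $r$, using $\End(\mathcal{S}^{\vee}) = K$. If $r < a$, a change of basis on the target gives $\Coker(\psi_a) \cong \Coker(\psi_r') \oplus \mathcal{S}^{\vee \oplus (a-r)}$, and the free summand contains $\mathcal{S}^{\vee}|_L \cong \mathcal{O}_L(-1) \oplus \mathcal{O}_L$ on any line, supplying an $\mathcal{O}_L(-1)$ quotient. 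So I may assume $r = a \le 4$ with $\pi$ surjective. The kernel--cokernel sequence of the composition $\psi_a = \pi \circ \iota$ combined with \eqref{OneOfKeyExactSeq} then reads
\[0 \to \Ker(\psi_a) \to \mathcal{S}^{\vee \oplus (4-a)} \xrightarrow{\varphi} \Omega_{\mathbb{P}^4}(1)|_{\mathbb{Q}^3} \to \Coker(\psi_a) \to 0,\]
where $\varphi$ is the composition of the inclusion $\Ker(\pi) \cong \mathcal{S}^{\vee \oplus (4-a)} \hookrightarrow \mathcal{S}^{\vee \oplus 4}$ with the quotient in \eqref{OneOfKeyExactSeq}. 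The remaining cases are treated one by one. For $a = 4$, $\Coker(\psi_4) \cong \Omega_{\mathbb{P}^4}(1)|_{\mathbb{Q}^3}$, which on any line $L \subset \mathbb{Q}^3 \subset \mathbb{P}^4$ splits as $\mathcal{O}_L(-1) \oplus \mathcal{O}_L^{\oplus 3}$ via $T_{\mathbb{P}^4}|_L \cong \mathcal{O}_L(2) \oplus \mathcal{O}_L(1)^{\oplus 3}$. For $a = 3$, $\varphi$ is a nonzero morphism $\mathcal{S}^{\vee} \to \Omega_{\mathbb{P}^4}(1)|_{\mathbb{Q}^3}$ because the four components of the surjection in \eqref{OneOfKeyExactSeq} form a basis of the four-dimensional Hom space (Lemma~\ref{coevaluation}), so Lemma~\ref{NegativeQuotinetOnLineAri} applies verbatim. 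For $a \in \{1,2\}$ I extend that lemma's argument by composing $\varphi$ with $\Omega_{\mathbb{P}^4}(1)|_{\mathbb{Q}^3} \hookrightarrow H^0(\mathcal{O}(1)) \otimes \mathcal{O}$, writing the resulting map $\mathcal{S}^{\vee \oplus (4-a)} \to \mathcal{O}^{\oplus 5}$ componentwise in terms of pairs $(l_{j\alpha},s_{j\alpha})$, and invoking the wedge-surjectivity of Lemma~\ref{wedgeproductOfS} to locate a hyperplane section containing a suitable line $L$ on which the image of $\mathcal{S}^{\vee \oplus (4-a)}|_L$ in $\Omega_{\mathbb{P}^4}(1)|_L$ misses the $\mathcal{O}_L(-1)$ summand.

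For the ``in particular'' part, a Chern-class computation on the four-term sequence for $a = 1$ gives $\Ker(\psi_1)$ torsion-free of rank $2$ with Chern polynomial $1 - 2h + 2l$, and forces $\Coker(\psi_1)$ to be torsion with Chern polynomial matching $\mathcal{O}_L(-1)$ for a line $L \subset \mathbb{Q}^3$; the refined analysis of the $a = 1$ case above identifies $L$ and yields $\Coker(\psi_1) \cong \mathcal{O}_L(-1)$. The splitting $\Ker(\psi_1) \cong \mathcal{O}(-1)^{\oplus 2}$ is then obtained by exhibiting two linearly independent injections $\mathcal{O}(-1) \hookrightarrow \Ker(\psi_1)$ via the induced linear map $H^0(T_{\mathbb{P}^4}(-1)|_{\mathbb{Q}^3}) \to H^0(\mathcal{S})$, and upgrading the resulting injection $\mathcal{O}(-1)^{\oplus 2} \hookrightarrow \Ker(\psi_1)$ to an isomorphism by matching rank, $c_1$, and $c_2$. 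The main obstacle is the extension of Lemma~\ref{NegativeQuotinetOnLineAri} to sources $\mathcal{S}^{\vee \oplus k}$ with $k \ge 2$ while keeping sufficient control to locate a line $L$ where the $\mathcal{O}_L(-1)$ piece of $\Omega_{\mathbb{P}^4}(1)|_L$ survives in the cokernel; the wedge-product pairing of Lemma~\ref{wedgeproductOfS} supplies the required linear-algebraic input.
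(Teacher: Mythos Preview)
Your framework (factoring $\psi_a$ through the coevaluation and converting via the kernel--cokernel sequence to a morphism $\varphi\colon\mathcal{S}^{\vee\oplus(4-a)}\to\Omega_{\mathbb{P}^4}(1)|_{\mathbb{Q}^3}$) is correct and matches the paper's approach for $a=4$ and $a=3$. The gap is that for $a\in\{1,2\}$ you do not actually prove anything: the sentence ``invoking the wedge-surjectivity of Lemma~\ref{wedgeproductOfS} to locate a hyperplane section containing a suitable line $L$ on which the image of $\mathcal{S}^{\vee\oplus(4-a)}|_L$ in $\Omega_{\mathbb{P}^4}(1)|_L$ misses the $\mathcal{O}_L(-1)$ summand'' is a hope, not an argument. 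On a line $L$ one has $\mathcal{S}^{\vee}|_L\cong\mathcal{O}_L\oplus\mathcal{O}_L(-1)$, so the source $\mathcal{S}^{\vee\oplus(4-a)}|_L$ already contains copies of $\mathcal{O}_L(-1)$ which can map nontrivially to the $\mathcal{O}_L(-1)$ summand of the target; there is no evident linear-algebra reason this fails for \emph{some} line, and Lemma~\ref{wedgeproductOfS} does not supply one. The paper's proof shows how much work is actually required here: for $a=2$ one must identify $\Coker(\psi_2)$ explicitly as one of $\mathcal{I}_{p,H}$, $\mathcal{O}_H(d,-d)$, or $\mathcal{I}_p$ via a case split on linear dependence of sections, a purity argument, and a Chern-polynomial computation; for $a=1$ one must then analyze a further map $\mathcal{S}^{\vee}\to\Coker(\psi_2)$ in each of those three cases separately and invoke Lemma~\ref{CannotBeSurjective} to exclude surjectivity. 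None of this is bypassed by your sketch.

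The ``in particular'' part has a similar problem. You assert that a Chern computation and ``the refined analysis of the $a=1$ case above'' give $\Coker(\psi_1)\cong\mathcal{O}_L(-1)$, but that refined analysis was never carried out. Your proposed argument for $\Ker(\psi_1)\cong\mathcal{O}(-1)^{\oplus 2}$ via sections of $\Ker(\psi_1)(1)$ is plausible but incomplete: you need to show the induced map $H^0(T_{\mathbb{P}^4}(-1)|_{\mathbb{Q}^3})\to H^0(\mathcal{S})$ has kernel of dimension exactly $2$ (not just $\ge 1$), and that the resulting two sections are generically independent. The paper instead establishes $\Ker(\psi_1)\cong\mathcal{O}(-1)^{\oplus 2}$ by a snake-lemma argument tracking through each of the three possible shapes of $\Coker(\varphi_1)$, obtaining in every case an extension of $\mathcal{O}(-1)$ by $\mathcal{O}(-1)$, which necessarily splits.
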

\begin{proof}
If the composite of the morphism 
$\psi_a$
and some projection $\mathcal{S}^{\vee\oplus a}\to \mathcal{S}^{\vee}$ is zero, then 
$\Coker(\psi_a)$
admits 
$\mathcal{S}^{\vee}$ as a quotient, 
and the assertion follows.
Hence we may assume that the composite 
cannot be zero for any projection 
$\mathcal{S}^{\vee\oplus a}\to \mathcal{S}^{\vee}$,
and this implies that $a\leq 4$
by Lemma~\ref{coevaluation}.

If $a=4$, then Lemma~\ref{S2nukelemma}
shows that $\Coker(\psi_4)\cong \Omega_{\mathbb{P}^4}(1)|_{\mathbb{Q}^3}$,
and the assertion follows.

If $a=3$, then 
$\psi_3$ 
can be regarded as the composite of the coevaluation morphism
\[\psi_4:T_{\mathbb{P}^4}(-2)|_{\mathbb{Q}^3}
\to \mathcal{S}^{\vee}\otimes 
\Hom(T_{\mathbb{P}^4}(-2)|_{\mathbb{Q}^3},\mathcal{S}^{\vee})^{\vee}\]
and some projection 
$\mathcal{S}^{\vee}\otimes 
\Hom(T_{\mathbb{P}^4}(-2)|_{\mathbb{Q}^3},\mathcal{S}^{\vee})^{\vee}
\to \mathcal{S}^{\vee\oplus 3}$.
Let $\mathcal{S}^{\vee}\to \mathcal{S}^{\vee\oplus 4}$ be the kernel of this projection,
and let $\varphi$ be the composite of the inclusion 
$\mathcal{S}^{\vee}\to \mathcal{S}^{\vee\oplus 4}$
and the surjection $ \mathcal{S}^{\vee\oplus 4}\to \Omega_{\mathbb{P}^4}(1)|_{\mathbb{Q}^3}$
in \eqref{OneOfKeyExactSeq}.
Then 
\begin{equation}\label{recall}
\Coker(\psi_3)\cong \Coker(\varphi)
\end{equation}
and $\Ker(\psi_3)\cong \Ker(\varphi)$
by the snake lemma.
Note that $\Hom(\mathcal{S}^{\vee},T_{\mathbb{P}^4}(-2)|_{\mathbb{Q}^3})=0$,
since $\Hom(\mathcal{S}^{\vee},\mathcal{O}(-1)^{\oplus 5})=0$
and $\Ext^1(\mathcal{S}^{\vee},\mathcal{O}(-2))=0$.
Hence $\varphi$ cannot be zero by 
\eqref{OneOfKeyExactSeq}.
Lemma~\ref{NegativeQuotinetOnLineAri} then shows that $\varphi$ is injective
and that the restriction $\Coker(\varphi)|_L$ to some line $L$ on $\mathbb{Q}^3$
admits a negative degree quotient.
Hence the assertion holds, and $\psi_3$ 
is injective.

Suppose that $a=2$. Then 
we can regard 
$\psi_2$ 
as the composite of some 
$\psi_3:T_{\mathbb{P}^4}(-2)|_{\mathbb{Q}^3}
\to \mathcal{S}^{\vee\oplus 3}$
and some projection 
$\mathcal{S}^{\vee\oplus 3}
\to \mathcal{S}^{\vee\oplus 2}$.
Let $\mathcal{S}^{\vee}\to \mathcal{S}^{\vee\oplus 3}$ be the kernel of this projection.
Note here that we have an exact sequence
\[
0\to 
T_{\mathbb{P}^4}(-2)|_{\mathbb{Q}^3}
\xrightarrow{\psi_3} \mathcal{S}^{\vee\oplus 3}
\to \Coker(\varphi)\to 0.
\]
Denote by $\varphi_1:\mathcal{S}^{\vee}\to \Coker(\varphi)$ the composite of the inclusion
$\mathcal{S}^{\vee}\to \mathcal{S}^{\vee\oplus 3}$
and the surjection $\mathcal{S}^{\vee\oplus 3}
\to \Coker(\varphi)$.
Then $\varphi_1$ cannot be zero, since 
$\Hom(\mathcal{S}^{\vee},T_{\mathbb{P}^4}(-2)|_{\mathbb{Q}^3})=0$. Moreover 
the snake lemma implies that 
\[\Coker(\psi_2)\cong \Coker(\varphi_1)\quad \textrm{and that }\quad 
\Ker(\psi_2)\cong \Ker(\varphi_1).\]

Recall the inclusion 
$i:\Coker(\varphi)\hookrightarrow \mathcal{I}_M(1)\oplus\mathcal{O}^{\oplus 2}$ 
in \eqref{CokerVarphiInc},
and consider the composite $i\circ\varphi_1$.
We have the following exact 
sequence:
\begin{equation}\label{imaruvarphi_1}
0\to \Coker(\varphi_1)\to \Coker(i\circ \varphi_1)\to \mathcal{O}(1)\to 0.
\end{equation}
Let $i\circ \varphi_1$ be equal to $(t^{\vee}, s_1^\vee, s_2^{\vee})$,
where $t^{\vee}\in \Hom(\mathcal{S}^{\vee},\mathcal{I}_M(1))$, $t\in H^0(\mathcal{S}(1))$,
$s_1^{\vee}$, $s_2^{\vee}
\in \Hom(\mathcal{S}^{\vee},\mathcal{O})$,
and $s_1$, $s_2\in H^0(\mathcal{S})$.
Since we have an exact sequence \eqref{CokerVarphiInc},
we have $t^{\vee}+h_1s_1^{\vee}+h_2s_2^{\vee}=0$ for some $h_1$, $h_2\in H^0(\mathcal{O}(1))$.
Now we have two cases:
\begin{enumerate}
\item $s_1$ and $s_2$ are linearly independent; 
\item $s_1$ and $s_2$ are linearly dependent.
\end{enumerate}
(1) If $s_1$ and $s_2$ are linearly independent, 
then $\varphi_1$ is injective,
and $\Coker(i\circ \varphi_1)$ has rank one. Thus we see that 
$\Coker(\varphi_1)$ is a torsion sheaf.
Moreover we claim that $\Coker(\varphi_1)$ is pure by \cite[Prop. 1.1.6]{Huybrechts-Lehn}:
first note that $\calext^q_{\mathbb{Q}^3}(\Coker(\varphi),\omega_{\mathbb{Q}^3})=0$ for all $q\geq2$;
thus $\calext^q_{\mathbb{Q}^3}(\Coker(\varphi_1),\omega_{\mathbb{Q}^3})=0$ for all $q\geq2$,
and hence $\Coker(\varphi_1)$ satisfies the generalized Serre's condition $S_{1,1}$
in \cite[Section 1.1]{Huybrechts-Lehn}. 
Now we compute the Chern polynomial of $\Coker(\varphi_1)$.
First note that $c_t(\Coker(\varphi))=c_t(\Omega_{\mathbb{P}^4}(1)|_{\mathbb{Q}^3})/c_t(\mathcal{S}^{\vee})
=1+lt^2-t^3$. 
Hence 
\[c_t(\Coker(\varphi_1))=c_t(\Coker(\varphi))/c_t(\mathcal{S}^{\vee})
=1+ht+2lt^2.\] 
Since $\Coker(\varphi_1)$ is a torsion sheaf, this implies that $\Coker(\varphi_1)$
is supported on a hyperplane section $H$ of $\mathbb{Q}^3$,
and the length of $\Coker(\varphi_1)$ at the generic point of $H$
is one.
Since $\Coker(\varphi_1)$ is pure, this implies that 
$\Coker(\varphi_1)$ is of the form $\mathcal{I}_{Z,H}(D)$,
where $D$ is a divisor on $H$ and 
$\mathcal{I}_{Z,H}$ denotes the ideal sheaf of 
some zero-dimensional closed subscheme $Z$
in $H$.
Note here that $c_t(\mathcal{O}_{H})=1+ht+2lt^2+2t^3$,
that $c_t(\mathcal{O}_L)=(c_t(\mathcal{S}^{\vee})/c_t(\mathcal{O}(-1)))^{-1}
=1-lt^2-t^3$,
and that $c_t(k(p))=1+2t^3$,
where $k(p)$ is the residue field at a point $p$
(see also 
\cite[Example 15.3.1]{fl} for the formula $c_t(k(p))=1+2t^3$).
Hence we see that $[D]=0\cdot l$ in $A^2\mathbb{Q}^3$.
Moreover if $D$ is of type $(d,-d)$, then 
$c_t(\mathcal{I}_{Z,H}(D))=1+ht+2lt^2+(2-2d^2-2\length Z)t^3$. 
Hence $(d,\length Z)=(0,1)$ or $(\pm 1,0)$.
Therefore $\Coker(\varphi_1)$ is isomorphic to 
either $\mathcal{I}_{p, H}$ or  $\mathcal{O}_H(d,-d)$ where $d=\pm 1$.
Thus the assertion holds.\\
(2) If $s_1$ and $s_2$ are linearly dependent, by replacing $s_i$ and $h_i$ if necessary, 
we may assume that $s_2=0$, and we have $t^{\vee}+h_1s_1^{\vee}=0$.
Set $\varphi_1':=(t^{\vee},s_1^{\vee}):
\mathcal{S}^{\vee}\to \mathcal{I}_M(1)\oplus \mathcal{O}_{\mathbb{Q}^3}$.
Then $\Coker(i\circ\varphi_1)\cong \Coker(\varphi_1')\oplus \mathcal{O}_{\mathbb{Q}^3}$
and $\Ker(\varphi_1)\cong \Ker(\varphi_1')$.
Note that $\varphi_1'\neq 0$ since $\varphi_1\neq 0$. Hence $s_1\neq 0$.
Let $L$ be the zero locus $(s_1)_0$ of $s_1$.
Then the composite of $\varphi_1'$ and the inclusion 
$\mathcal{I}_M(1)\oplus \mathcal{O}_{\mathbb{Q}^3}\to
\mathcal{O}(1)\oplus \mathcal{O}_{\mathbb{Q}^3}$
factors through the morphism 
$(-h_1,1):\mathcal{O}\to \mathcal{O}(1)\oplus \mathcal{O}_{\mathbb{Q}^3}$,
and we have the following commutative diagram with exact rows:
\begin{equation}\label{DecompOfvarphi'_1}
\xymatrix{
&\mathcal{S}^{\vee}\ar[d]_{\varphi_1'}\ar[r]^{s_1^{\vee}} 
&\mathcal{O}_{\mathbb{Q}^3}\ar[d]_{(-h_1,1)}\ar[r] 
&\mathcal{O}_{L}\ar[r]\ar[d]_{-\bar{h}_1}    & 0    \\
0\ar[r]&\mathcal{I}_M(1)\oplus\mathcal{O}_{\mathbb{Q}^3}\ar[r]
&\mathcal{O}(1)\oplus\mathcal{O}_{\mathbb{Q}^3}\ar[r] &\mathcal{O}_M(1)
\ar[r]              & 0    
}
\end{equation}
We see that 
$\im (\varphi_1')\cong \mathcal{I}_L$
and that 
$\Ker(\varphi_1')\cong \mathcal{O}(-1)$. 
We claim here that $\bar{h}_1\neq 0$.
Assume, to the contrary, that $\bar{h}_1=0$.
Then the snake lemma implies that $\Coker(\varphi_1')$ fits in the following exact sequence:
\[
0\to \mathcal{O}_L\to \Coker(\varphi_1')\to \mathcal{O}(1)\to \mathcal{O}_M(1)\to 0. 
\]
Since $\mathcal{O}_L$ is a torsion sheaf, the surjection 
$\Coker(\varphi_1')\oplus \mathcal{O}_{\mathbb{Q}^3}\to \mathcal{O}(1)$
induces a surjection $\mathcal{I}_M(1)\oplus \mathcal{O}_{\mathbb{Q}^3}\to \mathcal{O}(1)$.
On the other hand, the morphism 
$\mathcal{I}_M(1)\oplus \mathcal{O}_{\mathbb{Q}^3}\to \mathcal{O}(1)$
cannot be surjective since a line $M$ and a hyperplane meets at least at one point.
This is a contradiction. Hence $\bar{h}_1\neq 0$, and thus $L=M$. 
Moreover 
the commutative diagram \eqref{DecompOfvarphi'_1} induces 
the following exact sequence by the snake lemma:
\[0\to \Coker(\varphi_1')\to \mathcal{O}(1)\to k(p)\to 0,\]
where $p=(\bar{h}_1)_0$.
Therefore $\Coker(\varphi_1')=\mathcal{I}_p(1)$.
The exact sequence \eqref{imaruvarphi_1}, i.e., the sequence
\[0\to \Coker(\varphi_1)\to \mathcal{I}_p(1)\oplus \mathcal{O}_{\mathbb{Q}^3}
\to \mathcal{O}(1)\to 0\]
then shows that $\Coker(\varphi_1)=\mathcal{I}_p$.
Thus the assertion also holds if $s_1$ and $s_2$ are linearly dependent.

Suppose that $a=1$.
Then 
$\psi_1$ 
can be regarded as the composite of some 
$\psi_2:T_{\mathbb{P}^4}(-2)|_{\mathbb{Q}^3}
\to \mathcal{S}^{\vee\oplus 2}$
and some projection 
$\mathcal{S}^{\vee\oplus 2}
\to \mathcal{S}^{\vee}$.
Let $\mathcal{S}^{\vee}\to \mathcal{S}^{\vee\oplus 2}$ be the kernel of this projection.
Note here that we have an exact sequence
\[
0\to \Ker(\varphi_1)\to 
T_{\mathbb{P}^4}(-2)|_{\mathbb{Q}^3}
\xrightarrow{\psi_2} \mathcal{S}^{\vee\oplus 2}
\to \Coker(\varphi_1)\to 0.
\]
Denote by $\varphi_2:\mathcal{S}^{\vee}\to \Coker(\varphi_1)$ the composite of the inclusion
$\mathcal{S}^{\vee}\to \mathcal{S}^{\vee\oplus 2}$
and the surjection $\mathcal{S}^{\vee\oplus 2}
\to \Coker(\varphi_1)$.
Then the snake lemma implies that 
\[\Coker(\psi_1)\cong \Coker(\varphi_2).\]
We claim here that $\varphi_2\neq 0$.
Indeed, if $\Ker(\varphi_1)=0$, then the statement holds
since $\Hom(\mathcal{S}^{\vee},T_{\mathbb{P}^4}(-2)|_{\mathbb{Q}^3})=0$.
If $\Ker(\varphi_1)\neq 0$, 
then $\Ker(\varphi_1)\cong \Ker(i\circ\varphi_1)\cong \mathcal{O}(-1)$,
and we have $\Hom(\mathcal{S}^{\vee},\im (\psi_2))=0$,
since $\Ext^1(\mathcal{S}^{\vee},\mathcal{O}(-1))=0$.
Hence 
$\varphi_2\neq 0$.
As we have seen above, $\Coker(\varphi_1)$ is one of the following:
$\mathcal{I}_{p,H}$; $\mathcal{O}_{H}(d,-d)$ where $d=\pm 1$;
$\mathcal{I}_p$.
If $\Coker(\varphi_1)=\mathcal{I}_{p,H}$, 
$\Coker(\varphi_1)$ is contained in $\mathcal{O}_{H}$.
Since $\Ext^1(\mathcal{S}^{\vee},\mathcal{O}(-1))=0$, the restriction map 
$\Hom (\mathcal{S}^{\vee},\mathcal{O}_{\mathbb{Q}^3})
\to \Hom(\mathcal{S}^{\vee},\mathcal{O}_{H})$ is surjective.
Hence we see that $\varphi_2:\mathcal{S}^{\vee}\to 
\mathcal{I}_{p,H}$ factors through
the ideal sheaf 
$\mathcal{I}_L$ of some line $L$ passing through $p$.
If $L$ is not contained in $H$, then $L\cap H=\{p\}$ 
and we have a surjection $\mathcal{I}_L\to \mathcal{I}_{p,H}$.
Thus $\varphi_2$ is surjective. Hence $\Coker(\psi_1)=0$. This contradicts 
Lemma~\ref{CannotBeSurjective}. Therefore $L$ is contained in $H$,
and the morphism $\mathcal{I}_L\to \mathcal{I}_{p,H}$ induces
an inclusion $\mathcal{I}_{L,H}\hookrightarrow 
\mathcal{I}_{p,H}$.
Thus $\Coker(\varphi_2)\cong \mathcal{O}_L(-p)$.
If $\Coker(\varphi_1)=\mathcal{O}_{H}(d,-d)$ with $d=\pm 1$,
then $\varphi_2$ factors through a morphism 
$\bar{\varphi}_2:\mathcal{O}(-1,0)\oplus \mathcal{O}(0,-1)
\to  \mathcal{O}_{H}(d,-d)$.
Note that $\bar{\varphi}_2\neq 0$, since $\varphi_2\neq 0$.
Thus 
$\Coker(\varphi_2)$ is 
$\mathcal{O}_L(-1)$ for some line $L$ on $\mathbb{Q}^3$.
Hence the assertion follows. 
If $\Coker(\varphi_1)=\mathcal{I}_p$,
then $\Coker(\varphi_2)\cong \mathcal{O}_L(-p)$,
and the assertion follows.

Finally we claim that $\Ker(\psi_1)\cong \mathcal{O}(-1)^{\oplus 2}$. 
We have the following commutative 
diagram with exact rows:
\[
\xymatrix{
0\ar[r]&0\ar[d]\ar[r] &T_{\mathbb{P}^4}(-2)|_{\mathbb{Q}^3}\ar[d]_{\psi_2}\ar[r] 
&T_{\mathbb{P}^4}(-2)|_{\mathbb{Q}^3}\ar[r]\ar[d]_{\psi_1}    & 0    \\
0\ar[r]&\mathcal{S}^{\vee} \ar[r]      &\mathcal{S}^{\vee \oplus 2}\ar[r] &\mathcal{S}^{\vee}
\ar[r]              & 0    
}
\]
By the snake lemma, we have the following exact sequence:
\[0\to \Ker(\varphi_1)\to \Ker(\psi_1)\to \mathcal{S}^{\vee}
\xrightarrow{\varphi_2} \Coker(\varphi_1)\to \mathcal{O}_L(-1)\to 0.\]
Now $(\Ker(\varphi_1),\Coker(\varphi_1))$ is one of the following:
$(0,\mathcal{I}_{p,H})$;
$(0,\mathcal{O}_{H}(d,-d))$ where $d=\pm 1$;
$(\mathcal{O}(-1),\mathcal{I}_p)$.
If $(\Ker(\varphi_1),\Coker(\varphi_1))=(0,\mathcal{I}_{p,H})$,
then $\Ker(\psi_1)=\Ker(\varphi_2)$, $\im \varphi_2=\mathcal{I}_{L,H}$,
and the surjection $\mathcal{S}^{\vee}\to \mathcal{I}_{L,H}$ factors through $\mathcal{I}_L$.
We have the following commutative diagram with exact rows:
\[
\xymatrix{
0\ar[r]&\Ker(\varphi_2)\ar[d]\ar[r] &\mathcal{S}^{\vee}\ar[d]\ar[r] 
&\mathcal{I}_{L,H}\ar[r]\ar@{=}[d]    & 0    \\
0\ar[r]&\mathcal{O}(-1) \ar[r]      &\mathcal{I}_L\ar[r] &\mathcal{I}_{L,H}
\ar[r]              & 0    
}
\]
Hence we have the following exact sequence:
\begin{equation}\label{KurikaesiWoSakeru}
0\to \mathcal{O}(-1)\to \Ker(\varphi_2)\to \mathcal{O}(-1)\to 0.
\end{equation}
Therefore the claim holds in this case. 
If $(\Ker(\varphi_1),\Coker(\varphi_1))=(0,\mathcal{O}_{H}(d,-d))$,
we may assume, without loss of generality, that $d=1$.
Then $\Ker(\psi_1)=\Ker(\varphi_2)$, $\im \varphi_2=\mathcal{O}_{H}(0,-1)$.
Take an inclusion $\mathcal{O}_H(0,-1)\hookrightarrow \mathcal{O}_H$ and
we regard $\mathcal{O}_H(0,-1)$ as the ideal sheaf $\mathcal{I}_{M,H}$ of some line $M$ in $H$. 
The dual of the composite of 
the projection
$\mathcal{O}_H(-1,0)\oplus\mathcal{O}_H(0,-1)\to \mathcal{O}_H(0,-1)$
and the inclusion $\mathcal{O}_H(0,-1)\hookrightarrow \mathcal{O}_H$ gives rise a section
$\bar{s}$ of $H^0(\mathcal{O}_H(1,0)\oplus \mathcal{O}_H(0,1))$.
Since $H^0(\mathcal{S})\cong H^0(\mathcal{O}_H(1,0)\oplus \mathcal{O}_H(0,1))$,
we can lift the section $\bar{s}$ to a section $s$ of $H^0(\mathcal{S})$.
Then $(s)_0=M$ and the surjection 
$\mathcal{S}^{\vee}\to \im \varphi_2=\mathcal{O}_{H}(0,-1)$ factors through $\mathcal{I}_M$.
We have the following commutative diagram with exact rows:
\[
\xymatrix{
0\ar[r]&\Ker(\varphi_2)\ar[d]\ar[r] &\mathcal{S}^{\vee}\ar[d]\ar[r] 
&\mathcal{O}_{H}(0,1)\ar[r]\ar@{=}[d]    & 0    \\
0\ar[r]&\mathcal{O}(-1) \ar[r]      &\mathcal{I}_M\ar[r] &\mathcal{I}_{M,H}
\ar[r]              & 0    
}
\]
Hence we obtain the exact sequence~\eqref{KurikaesiWoSakeru}.
Therefore the claim holds in this case. 
If $(\Ker(\varphi_1),\Coker(\varphi_1))=(\mathcal{O}(-1),\mathcal{I}_p)$,
then $\im (\varphi_2)=\mathcal{I}_L$, and we have the following exact sequence:
\[0\to \mathcal{O}(-1)\to \Ker(\psi_1)\to \mathcal{O}(-1)\to 0.\]
Hence the claim holds in this case too. 
\end{proof}

Lemma~\ref{SupposedVeryImportantPart2} will be applied to $\pi$
in \eqref{oneOfapplicationOfpi}, and plays an crucial role in the proof
of Theorem~\ref{Chern2}.
\begin{lemma}\label{SupposedVeryImportantPart2}
Let $\psi_a:T_{\mathbb{P}^4}(-2)|_{\mathbb{Q}^3}
\to \mathcal{S}^{\vee \oplus a}$ be a morphism of $\mathcal{O}_{\mathbb{Q}^3}$-modules
where $a$ is  a positive integer,
and let $\pi:\mathcal{O}_{\mathbb{Q}^3}(-1)\to \Coker(\psi_a)$ be a 
morphism of $\mathcal{O}_{\mathbb{Q}^3}$-modules.
If $\Coker(\pi)$ does not admit a negative degree quotient,
then $a=1$, $\Coker(\pi)=0$ and $\Ker(\pi)$ is isomorphic to $\mathcal{I}_L(-1)$
for some line $L$ in $\mathbb{Q}^3$. 
\end{lemma}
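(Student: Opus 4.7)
The plan is to rule out every case other than the stated conclusion by constructing an explicit negative-degree quotient of $\Coker(\pi)$ on some line, contradicting the hypothesis, and then to read off the structure of $\pi$ directly from the description of $\Coker(\psi_1)$ furnished by Lemma~\ref{SupposedVeryImportant} in the surviving case.

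First I reduce to the situation where every coordinate projection $p_i\colon\mathcal{S}^{\vee\oplus a}\to\mathcal{S}^{\vee}$ satisfies $p_i\psi_a\neq 0$. If instead $p_i\psi_a=0$ for some $i$, then the $i$-th factor $\mathcal{S}^{\vee}$ splits off as a direct summand of $\Coker(\psi_a)$; writing $\pi^{(i)}=p_i\pi'$ for the corresponding component of any lift $\pi'$ of $\pi$ to $\mathcal{S}^{\vee\oplus a}$, either $\pi^{(i)}=0$ and $\Coker(\pi)$ still surjects onto $\mathcal{S}^{\vee}$, which yields an $\mathcal{O}_{L'}(-1)$-quotient on any type-$(1,0)$ line $L'$ in a smooth hyperplane section of $\mathbb{Q}^3$, or $\pi^{(i)}\neq 0$ and the isomorphism $\mathcal{S}^{\vee}\cong\mathcal{S}(-1)$ identifies $\Coker(\pi^{(i)})$ with $\mathcal{I}_M$, where $M=(s)_0$ is the line cut out by the section $s\in H^0(\mathcal{S})$ corresponding to $\pi^{(i)}$; the conormal identity $\mathcal{I}_M|_M\cong\mathcal{O}_M\oplus\mathcal{O}_M(-1)$ then produces a negative quotient of $\Coker(\pi)$ on $M$. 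Both subcases contradict the hypothesis, so in particular the case $a=1$ with $\psi_1=0$ is eliminated.

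Hence $p_i\psi_a\neq 0$ for every $i$; by Lemma~\ref{coevaluation} this forces $a\leq 4$, and applying Lemma~\ref{SupposedVeryImportant} componentwise yields, for each $i$, a line $L_i$ such that $\Coker(p_i\psi_a)\cong\mathcal{O}_{L_i}(-1)$ and a surjection $\Coker(\psi_a)\twoheadrightarrow\mathcal{O}_{L_i}(-1)$. Let $\pi_i\colon\mathcal{O}(-1)\to\mathcal{O}_{L_i}(-1)$ denote the composite with $\pi$. If $\pi_i=0$ for some $i$, then $\Coker(\pi)$ inherits $\mathcal{O}_{L_i}(-1)$ as a quotient, contradicting the hypothesis; otherwise each $\pi_i$ is surjective because $\Hom(\mathcal{O}(-1),\mathcal{O}_{L_i}(-1))\cong K$. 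In the case $a=1$ this already yields the desired conclusion: $\Coker(\psi_1)\cong\mathcal{O}_L(-1)$ by Lemma~\ref{SupposedVeryImportant}, $\pi$ is the (up-to-scalar unique) surjection $\mathcal{O}(-1)\twoheadrightarrow\mathcal{O}_L(-1)$, and so $\Coker(\pi)=0$ with $\Ker(\pi)\cong\mathcal{I}_L(-1)$.

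The main obstacle is to eliminate $2\leq a\leq 4$ with all $\pi_i$ surjective. Here my plan is to work with the combined resolution $T_{\mathbb{P}^4}(-2)|_{\mathbb{Q}^3}\oplus\mathcal{O}(-1)\xrightarrow{(\psi_a,\pi')}\mathcal{S}^{\vee\oplus a}\to\Coker(\pi)\to 0$ and to exploit the total cohomology vanishing $h^q(T_{\mathbb{P}^4}(-2)|_{\mathbb{Q}^3})=h^q(\mathcal{O}(-1))=h^q(\mathcal{S}^{\vee})=0$ for all $q$, together with the explicit descriptions of $\Coker(\psi_a)$ extracted from the proof of Lemma~\ref{SupposedVeryImportant} --- namely $\Omega_{\mathbb{P}^4}(1)|_{\mathbb{Q}^3}$ for $a=4$, a subsheaf of $\mathcal{I}_M(1)\oplus\mathcal{O}^{\oplus 2}$ for $a=3$, and one of $\mathcal{I}_{p,H}$, $\mathcal{O}_H(\pm 1,\mp 1)$, $\mathcal{I}_p$ for $a=2$. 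In each case I would combine the resulting rank and determinant data for $\Coker(\pi)$ with a careful choice of line --- typically one through the distinguished point $p$ or contained in a component of the reducible divisor cut out by the image of $\pi$ --- to produce the forbidden negative quotient, thereby forcing $a=1$.
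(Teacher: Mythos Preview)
Your reduction in the first two paragraphs and the treatment of $a=1$ are essentially correct and parallel the paper's argument. One small slip: having $p_i\psi_a\neq 0$ for every \emph{coordinate} projection does not by itself bound $a$ by $4$; what you need is that no nontrivial linear combination of the $p_i\psi_a$ vanishes, i.e.\ that $\Coker(\psi_a)$ admits no $\mathcal{S}^{\vee}$-quotient, which is exactly the reduction the paper makes before invoking Lemma~\ref{coevaluation}. This is easily repaired by a change of basis in $K^a$.

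The real gap is that for $2\leq a\leq 4$ you have only stated an intention, not a proof, and this is precisely where the content of the lemma lies; the paper devotes roughly two pages to these cases. For $a=4$ one needs a uniform-bundle argument: assuming $\Coker(\pi)|_L$ has no negative quotient for any line forces $\Coker(\pi)|_L\cong\mathcal{O}_L^{\oplus 3}$ everywhere, hence $\Coker(\pi)\cong\mathcal{O}^{\oplus 3}$ by \cite[(3.6.1)]{w3}, contradicting $H^0(\Omega_{\mathbb{P}^4}(1)|_{\mathbb{Q}^3})=0$. For $a=3$ one uses the inclusion $i\colon\Coker(\psi_3)\hookrightarrow\mathcal{I}_M(1)\oplus\mathcal{O}^{\oplus 2}$ of \eqref{CokerVarphiInc}, writes $i\circ\pi=(t,g_1,g_2)$, and splits into subcases according to whether $g_1,g_2$ are linearly independent; in one subcase $\Coker(\pi)\cong\mathcal{I}_{C\sqcup M}(1)$ for a conic $C$ disjoint from $M$, and the bad quotient appears on a line joining $C$ to $M$, while the other subcase produces an extension of $\mathcal{I}_p$ by $\mathcal{O}_H(-M)$. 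The case $a=2$ similarly runs through the three explicit shapes of $\Coker(\psi_2)$. None of this follows from the componentwise surjections $\pi_i\colon\mathcal{O}(-1)\twoheadrightarrow\mathcal{O}_{L_i}(-1)$ you set up, nor from the acyclicity of $T_{\mathbb{P}^4}(-2)|_{\mathbb{Q}^3}$, $\mathcal{O}(-1)$ and $\mathcal{S}^{\vee}$: those constraints do not distinguish the forbidden cases from the permitted $a=1$ case, and the ``rank and determinant data with a careful choice of line'' you invoke is exactly the substantial case-by-case work that remains to be done.
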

\begin{proof}
We may assume that $\pi\neq 0$.

Suppose that $\Coker(\psi_a)$ admits $\mathcal{S}^{\vee}$ as a quotient;
let $p:\Coker(\psi_a)\to \mathcal{S}^{\vee}$ be the surjection.
Note that $\Coker(\pi)$ admits $\Coker(p\circ\pi)$ as a quotient. 
If $p\circ \pi=0$, then $\Coker(p\circ \pi)\cong \mathcal{S}^{\vee}$,
and if $p\circ \pi\neq 0$, then $\Coker(p\circ \pi)\cong \mathcal{I}_L$ for some line 
$L$ in $\mathbb{Q}^3$. Therefore the restriction of $\Coker(\pi)$ to a line
admits a negative degree quotient. 

In the following, we assume that 
$\Coker(\psi_a)$ does not admit $\mathcal{S}^{\vee}$ as a quotient.
Hence 
$a\leq 4$ by Lemma~\ref{coevaluation}.

Suppose that $a=4$. Then $\Coker(\psi_4)\cong \Omega_{\mathbb{P}^4}(1)|_{\mathbb{Q}^3}$
by Lemmas~\ref{S2nukelemma} and \ref{coevaluation}.
Since $\Omega_{\mathbb{P}^4}(1)|_{L}\cong \mathcal{O}_L(-1)\oplus\mathcal{O}_L^{\oplus 3}$
for any line $L$ in $\mathbb{Q}^3$,
if $\Coker(\pi)|_L$ does not admit 
a negative degree quotient for any line $L$ in  $\mathbb{Q}^3$,
we see that $\Coker(\pi)|_L\cong \mathcal{O}_L^{\oplus 3}$ for any line $L$ in  $\mathbb{Q}^3$. 
This implies that $\Coker(\pi)\cong \mathcal{O}_{\mathbb{Q}^3}^{\oplus 3}$ 
by \cite[(3.6.1) Lemma]{w3}.
Thus $\Omega_{\mathbb{P}^4}(1)|_{\mathbb{Q}^3}
\cong \mathcal{O}(-1)\oplus \mathcal{O}^{\oplus 3}$,
which contradicts $H^0(\Omega_{\mathbb{P}^4}(1)|_{\mathbb{Q}^3})=0$.
Therefore $\Coker(\pi)|_L$ admits a negative degree quotient 
for some line $L$ in  $\mathbb{Q}^3$.

Suppose that $a=3$.
Recall that $\Coker(\psi_3)\cong \Coker(\varphi)$
in \eqref{recall}.
Recall also 
the inclusion $i:\Coker(\varphi)\hookrightarrow \mathcal{I}_M(1)\oplus\mathcal{O}^{\oplus 2}$ 
in \eqref{CokerVarphiInc},
and consider the composite $i\circ\pi$.
We have the following exact 
sequence:
\begin{equation}\label{DefOfRho}
0\to \Coker(\pi)\to \Coker(i\circ \pi)
\xrightarrow{\rho}
\mathcal{O}(1)\to 0.
\end{equation}
Let $i\circ \pi$ be equal to $(t, g_1, g_2)$,
where $t\in \Hom(\mathcal{O}(-1),\mathcal{I}_M(1))\cong H^0(\mathcal{I}_M(2))$,
$g_1$, $g_2
\in \Hom(\mathcal{O}(-1),\mathcal{O})\cong H^0(\mathcal{O}(1))$.
Since we have an exact sequence \eqref{CokerVarphiInc},
we have $t+h_1g_1+h_2g_2=0$ for some $h_1$, $h_2\in H^0(\mathcal{O}(1))$.
Now we have two cases:
\begin{enumerate}
\item $g_1$ and $g_2$ are linearly independent; 
\item $g_1$ and $g_2$ are linearly dependent.
\end{enumerate}
(1) If $g_1$ and $g_2$ are linearly independent, 
then the cokernel of the morphism $(g_1,g_2):\mathcal{O}(-1)\to \mathcal{O}^{\oplus 2}$
is of the form $\mathcal{I}_C(1)$, where $C$ is the conic defined by $g_1$ and $g_2$.
Hence $\Coker(i\circ \pi)$ fits in the following exact sequence:
\[0\to \mathcal{I}_M(1)\to \Coker(i\circ \pi)\to \mathcal{I}_C(1)\to 0.\] 
Now consider the composite of the injection $\mathcal{I}_M\to \Coker(i\circ\pi)(-1)$
and the surjection $\rho (-1):\Coker(i\circ\pi)(-1)\to \mathcal{O}$.
The composite is nothing but the inclusion $\mathcal{I}_M\hookrightarrow \mathcal{O}$
and its cokernel is $\mathcal{O}_M$.
Thus the surjection 
$\rho (-1)$
induces a surjection 
$\bar{\rho}(-1):\mathcal{I}_C\to \mathcal{O}_M$. This implies that $C\cap M=\emptyset$.
Moreover 
$\Coker(\pi)(-1)\cong \Ker (\bar{\rho}(-1))
\cong \mathcal{I}_{C\sqcup M}$.
Hence $\Coker(\pi)\cong \mathcal{I}_{C\sqcup M}(1)$.
Note that the conic $C$ and the line $M$ can be joined by a line 
$L$ in $\mathbb{Q}^3$.
Indeed, any hyperplane section $H$ containing $M$ intersects $C$ at some point $p$,
and the point $p$ and $M$ can be joined by a line $L$ in $H$.
Now we see that 
$\Coker(\pi)|_L$ admits a negative degree quotient. \\
(2) If $g_1$ and $g_2$ are linearly dependent,
by replacing $g_i$ and $h_i$ if necessary, 
we may assume that $g_2=0$, and we have $t+h_1g_1=0$.
Set $\pi_1':=(t,g_1):
\mathcal{O}(-1)\to \mathcal{I}_M(1)\oplus \mathcal{O}_{\mathbb{Q}^3}$.
Then $\Coker(i\circ\pi)\cong \Coker(\pi')\oplus \mathcal{O}_{\mathbb{Q}^3}$.
Note that $\pi'\neq 0$ since $\pi\neq 0$. Hence $g_1\neq 0$.
Let $H$ be the hyperplane defined by $g_1$.
Then 
we have the following commutative diagram with exact rows:
\begin{equation}\label{DecompOfpi'}
\xymatrix{
0\ar[r]&\mathcal{O}(-1)\ar[d]_{\pi'}\ar[r]^{g_1} &\mathcal{O}_{\mathbb{Q}^3}\ar[d]_{(-h_1,1)}\ar[r] 
&\mathcal{O}_{H}\ar[r]\ar[d]_{-\bar{h}_1}    & 0    \\
0\ar[r]&\mathcal{I}_M(1)\oplus\mathcal{O}_{\mathbb{Q}^3}\ar[r]
&\mathcal{O}(1)\oplus\mathcal{O}_{\mathbb{Q}^3}\ar[r] &\mathcal{O}_M(1)
\ar[r]              & 0    
}
\end{equation}
We claim here that $\bar{h}_1\neq 0$.
Assume, to the contrary, that $\bar{h}_1=0$.
Then  the snake lemma shows that 
we have 
the following exact sequence:
\[
0\to \mathcal{O}_H\to \Coker(\pi')\to 
\mathcal{O}(1)\to \mathcal{O}_M(1)\to 0. 
\]
Since $\mathcal{O}_H$ is a torsion sheaf, the surjection 
$\rho:\Coker(\pi')\oplus \mathcal{O}_{\mathbb{Q}^3}\to \mathcal{O}(1)$
sends $\mathcal{O}_H$ to zero, and thus $\rho$ 
induces a surjection $\mathcal{I}_M(1)\oplus \mathcal{O}_{\mathbb{Q}^3}\to \mathcal{O}(1)$.
On the other hand, the morphism 
$\mathcal{I}_M(1)\oplus \mathcal{O}_{\mathbb{Q}^3}\to \mathcal{O}(1)$
cannot be surjective since a line $M$ and a hyperplane meets at least at one point.
This is a contradiction. Hence $\bar{h}_1\neq 0$. 
Then 
the kernel of the morphism $-\bar{h}_1:\mathcal{O}_H\to \mathcal{O}_M(1)$
is $\mathcal{O}_H(-M)$ and the cokernel of $-\bar{h}_1$ is $k(p)$ 
for some point $p\in M$.
Hence the commutative diagram \eqref{DecompOfpi'} 
induces 
the following exact sequence by the snake lemma:
\[0\to \mathcal{O}_H(-M)\to \Coker(\pi')\to \mathcal{O}(1)\to k(p)\to 0.\]
Since $\mathcal{O}_H(-M)$ is a torsion sheaf, the surjection 
$\rho:\Coker(\pi')\oplus \mathcal{O}_{\mathbb{Q}^3}\to \mathcal{O}(1)$
sends $\mathcal{O}_H(-M)$ to zero, and thus 
the inclusion $\mathcal{O}_H(-M)\hookrightarrow 
\Coker(\pi')\oplus\mathcal{O}_{\mathbb{Q}^3}$ induces an 
inclusion $\mathcal{O}_H(-M)\hookrightarrow \Coker(\pi)$.
The exact sequence \eqref{DefOfRho}
induces the following exact sequence: 
\[0\to \Coker(\pi)/
\mathcal{O}_H(-M)
\to \mathcal{I}_p(1)\oplus \mathcal{O}_{\mathbb{Q}^3}
\to \mathcal{O}(1)\to 0.\]
This shows that $\Coker(\pi)/\mathcal{O}_H(-M)=\mathcal{I}_p$.

Suppose that $a=2$. As we have seen in the proof of Lemma~\ref{SupposedVeryImportant},
$\Coker(\psi_2)$ is isomorphic to $\Coker(\varphi_1)$,
and $\Coker(\varphi_1)$ is one of the following: $\mathcal{I}_{p,H}$;
$\mathcal{O}_H(d,-d)$ where $d=\pm 1$;
$\mathcal{I}_p$.
If $\Coker(\varphi_1)=\mathcal{I}_{p,H}$,
then 
$\Coker(\pi)$ admits
$\mathcal{O}_C(-p)$ as a quotient, where $C$ is a conic on $H$.
If $\Coker(\varphi_1)=\mathcal{O}_{H}(d,-d)$ with $d=\pm 1$,
then 
$\Coker(\pi)$ admits
$\mathcal{O}_L(-1)$ as a quotient, where $L$ is a line on $H$.
If $\Coker(\varphi_1)=\mathcal{I}_{p}$,
then $\Coker(\pi)$
admits $\mathcal{I}_{p,H}$ as a quotient. 
Hence the assertion follows if $a=2$.

Suppose that $a=1$.
As we have seen in the proof of Lemma~\ref{SupposedVeryImportant},
$\Coker(\psi_1)$ is isomorphic to $\Coker(\varphi_2)$,
and $\Coker(\varphi_2)$ is isomorphic to $\mathcal{O}_L(-p)$.
Since $\pi\neq 0$, $\pi:\mathcal{O}(-1)\to \mathcal{O}_L(-p)$ is surjective,
and $\Ker(\pi)\cong \mathcal{I}_L(-1)$. This completes the proof.
\end{proof}

\section{A lower bound for the third Chern class}\label{lowerBound}
Note that 
\begin{equation}\label{tautologicalSelfIntersectionNonNegative}
c_3\geq 2c_1c_2-c_1^3
\end{equation}
for a nef vector bundle $\mathcal{E}$ on a complete $3$-fold $X$,
since $H(\mathcal{E})^{r+2}
=c_3-2c_1c_2+c_1^3\geq 0$
for a nef line bundle $H(\mathcal{E})$.
If there exists 
an injection $\mathcal{L}
\to 
\mathcal{E}$ from a line bundle $\mathcal{L}$,
then we have a 
lower bound,
which is better if $\mathcal{L}\cong \mathcal{O}(D)$ for some effective divisor $D$,
as the following lemma shows:
\begin{lemma}\label{c3lowerbound}
Let $\mathcal{E}$ be a nef vector bundle of rank $r$ on a 
complete variety $X$ of dimension three.
Let $\mathcal{L}$ be a line bundle on $X$
such that $H^0(\mathcal{E}\otimes \mathcal{L}^{-1})\neq 0$.
Then we have the following inequality:
\[c_3\geq 2c_1c_2-c_1^3+
(c_1^2-c_2)c_1(\mathcal{L}).\]
\end{lemma}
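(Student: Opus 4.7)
The plan is to lift the inequality to the projective bundle $\pi\colon \mathbb{P}(\mathcal{E}) \to X$, a complete $(r+2)$-dimensional variety on which the tautological class $H(\mathcal{E})$ is nef, and to deduce it from a single intersection-theoretic inequality involving an effective divisor manufactured from the hypothesized section.

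First I would upgrade the section $s\in H^0(\mathcal{E}\otimes \mathcal{L}^{-1})$ to an effective divisor on $\mathbb{P}(\mathcal{E})$. Using the Grothendieck convention $\mathbb{P}(\mathcal{E}) = \proj S(\mathcal{E})$ adopted in the paper, one has $\pi_* H(\mathcal{E}) = \mathcal{E}$, so the projection formula yields
\[
H^0\bigl(\mathbb{P}(\mathcal{E}),\, H(\mathcal{E})-\pi^* c_1(\mathcal{L})\bigr) \;\cong\; H^0(X,\,\mathcal{E}\otimes\mathcal{L}^{-1}) \;\neq\; 0.
\]
Setting $l := c_1(\mathcal{L})$, this shows that the divisor class $D := H(\mathcal{E}) - \pi^* l$ is represented by an effective divisor on $\mathbb{P}(\mathcal{E})$. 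Since $\mathcal{E}$ is nef, $H(\mathcal{E})$ is nef, and therefore the intersection of the nef class $H(\mathcal{E})^{r+1}$ with the effective divisor $D$ is non-negative:
\[
H(\mathcal{E})^{r+1} \cdot D \;\geq\; 0.
\]

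It remains only to expand this intersection number. Using the standard Segre-class push-forward formulas on a projective bundle in the Grothendieck convention,
\[
\pi_* H(\mathcal{E})^{r+2} \;=\; c_1^3 - 2c_1 c_2 + c_3, \qquad \pi_* H(\mathcal{E})^{r+1} \;=\; c_1^2 - c_2,
\]
the projection formula yields
\[
0 \;\leq\; H(\mathcal{E})^{r+1}\cdot D \;=\; (c_1^3 - 2c_1 c_2 + c_3) \;-\; (c_1^2 - c_2)\,l,
\]
which rearranges to the asserted bound. No serious obstacle is expected here; the only point requiring a little care is tracking the duality sign in the Segre-class formulas under the Grothendieck convention, and the consistency check that setting $l=0$ recovers the baseline inequality \eqref{tautologicalSelfIntersectionNonNegative}. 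Note also that the proof does not require $\mathcal{L}$ to be effective, nor $\mathcal{L}$ to saturate to a sub-line-bundle of $\mathcal{E}$, which is convenient because $X$ is only assumed to be a complete variety.
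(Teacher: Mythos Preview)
Your proof is correct and takes a genuinely different route from the paper's. The paper first passes to a resolution of singularities, then saturates the section so that its zero locus $Z$ has codimension $\geq 2$, blows up along $Z$, and obtains on the blow-up a short exact sequence $0\to \pi^*\mathcal{L}(E)\to \pi^*\mathcal{E}\to \mathcal{G}\to 0$ with $\mathcal{G}$ a nef bundle of rank $r-1$; the inequality then comes from expanding $H(\mathcal{G})^{r+1}\geq 0$ in terms of the Chern classes of $\pi^*\mathcal{E}$ and $\pi^*\mathcal{L}(E)$ and pushing forward. Your argument bypasses all of this: the non-zero section already gives an effective divisor $D\in |H(\mathcal{E})-\pi^*c_1(\mathcal{L})|$ on $\mathbb{P}(\mathcal{E})$, and the single positivity $H(\mathcal{E})^{r+1}\cdot D\geq 0$, together with the Segre push-forwards $\pi_*H(\mathcal{E})^{r+2}=c_1^3-2c_1c_2+c_3$ and $\pi_*H(\mathcal{E})^{r+1}=c_1^2-c_2$, finishes the job. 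This is shorter and avoids the reduction to smooth $X$, the saturation, and the blow-up; conversely, the paper's approach has the minor by-product of actually producing the rank-$(r-1)$ nef quotient $\mathcal{G}$, though this is not used elsewhere. One small notational slip: when you write ``$\pi_* H(\mathcal{E})=\mathcal{E}$'' you mean the sheaf push-forward $\pi_*\mathcal{O}_{\mathbb{P}(\mathcal{E})}(1)\cong\mathcal{E}$, not a cycle push-forward.
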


\begin{proof}
Let $\pi:\tilde{X}\to X$ be 
a proper birational morphism with $\tilde{X}$ smooth.
Then $H^0(\pi^*\mathcal{E}\otimes \pi^*\mathcal{L}^{-1})\neq 0$,
and the inequality for the inclusion  $\pi^*\mathcal{L}\to \pi^*\mathcal{E}$
implies the desired inequality by the projection formula.
Hence we may assume that $X$ is smooth.

Let $s$  be a non-zero element of 
$H^0(\mathcal{E}\otimes \mathcal{L}^{-1})$.
If the zero locus $(s)_0$ of $s$ contains a divisor $D_1$,
then we may replace 
$\mathcal{L}$ by $\mathcal{L}\otimes \mathcal{O}(D_1)$
since 
$(c_1^2-c_2)c_1(\mathcal{O}(D_1))\geq 0$.
Hence we may assume that the zero locus $(s)_0$ of $s$ has codimension $\geq 2$.
Set $Z:=(s)_0$,
and let $\pi:Y\to X$ be the blowing-up 
along the closed subscheme $Z$.
Denote by $E$ the exceptional divisor of $\pi$. 
Then we have the following exact sequence:
\[0\to \pi^*\mathcal{L}\otimes \mathcal{O}(E)\to \pi^*\mathcal{E}
\to \mathcal{G}\to 0,\]
where $\mathcal{G}$ a nef vector bundle of rank $r-1$ on $Y$.
Hence we have 
\[0\leq H(\mathcal{G})^{r+1}=c_3(\mathcal{G})
-2c_1(\mathcal{G})c_2(\mathcal{G})+c_1(\mathcal{G})^3.\]
Since 
\[
\begin{split}
c_t(\mathcal{G})
&=\dfrac{c_t(\pi^*\mathcal{E})}{c_t(\pi^*\mathcal{L}\otimes \mathcal{O}(E))}\\
&=
(1+c_1(\pi^*\mathcal{E})t+c_2(\pi^*\mathcal{E})t^2+
c_3(\pi^*\mathcal{E})t^3)
\\
&\qquad \times
(1-c_1(\pi^*\mathcal{L}\otimes\mathcal{O}(E))t
+c_1(\pi^*\mathcal{L}\otimes\mathcal{O}(E))^2t^2
-c_1(\pi^*\mathcal{L}\otimes\mathcal{O}(E))^3t^3),
\end{split}\]
we have 
\[
\begin{split}
c_1(\mathcal{G})
&=c_1(\pi^*\mathcal{E})-c_1(\pi^*\mathcal{L}\otimes \mathcal{O}(E));\\
c_2(\mathcal{G})
&=c_2(\pi^*\mathcal{E})
-c_1(\pi^*\mathcal{E})c_1(\pi^*\mathcal{L}\otimes \mathcal{O}(E))
+c_1(\pi^*\mathcal{L}\otimes \mathcal{O}(E))^2;\\
c_3(\mathcal{G})
&=
c_3(\pi^*\mathcal{E})
-c_2(\pi^*\mathcal{E})c_1(\pi^*\mathcal{L}\otimes \mathcal{O}(E))
+c_1(\pi^*\mathcal{E})c_1(\pi^*\mathcal{L}\otimes \mathcal{O}(E))^2\\
&\quad
-c_1(\pi^*\mathcal{L}\otimes \mathcal{O}(E))^3.
\end{split}
\]
Hence we infer that 
\[
\begin{split}
H(\mathcal{G})^{r+1}&=
c_3(\pi^*\mathcal{E})
-2c_1(\pi^{*}\mathcal{E})c_2(\pi^*\mathcal{E})
+c_1(\pi^*\mathcal{E})^3
\\
&\quad
+c_2(\pi^*\mathcal{E})c_1(\pi^*\mathcal{L}\otimes \mathcal{O}(E))
-c_1(\pi^*\mathcal{E})^2c_1(\pi^*\mathcal{L}\otimes \mathcal{O}(E)).
\end{split}\]
Note here that $c_2(\pi^*\mathcal{E})c_1(\mathcal{O}(E))=0$
and that $c_1(\pi^*\mathcal{E})^2c_1(\mathcal{O}(E))=0$,
since $\dim Z\leq 1$.
Therefore we have 
\[
0\leq
c_3(\pi^*\mathcal{E})
-2c_1(\pi^{*}\mathcal{E})c_2(\pi^*\mathcal{E})
+c_1(\pi^*\mathcal{E})^3
+c_2(\pi^*\mathcal{E})c_1(\pi^*\mathcal{L})
-c_1(\pi^*\mathcal{E})^2c_1(\pi^*\mathcal{L}).
\]
Hence we obtain the desired inequality by the projection formula.
\end{proof}
Lemma~\ref{c3lowerbound} will be applied to $\mathcal{E}$ 
in Section~\ref{Case6h^0(E(-1))=1}.

\section{Set-up for the proof of Theorem~\ref{Chern2}}\label{Set-up for the case $n=3$}
Let $\mathcal{E}$ be a nef vector bundle of rank $r$ on $\mathbb{Q}^3$
with $c_1=2h$.
It follows from \cite[Lemma~4.1 (1)]{MR4453350} that
\begin{equation}\label{firstvanishing}
h^q(\mathcal{E}(t))=0 \textrm{ for } q>0 \textrm{ and } t\geq 0.
\end{equation}
Moreover, if $H(\mathcal{E})^{r+2}
=c_3-2c_1c_2+c_1^3
=c_3-4c_2h+16>0$,
then 
\begin{equation}\label{higherE(-1)vanish}
h^q(\mathcal{E}(-1))=0 \textrm{ for } q>0
\end{equation}
by \cite[Lemma~4.1 (2)]{MR4453350}.
Note here that 
\begin{equation}\label{c3nonnegative}
c_3\geq 0
\end{equation}
by \cite[Theorem~8.2.1]{MR2095472},
since $\mathcal{E}$ is nef.
Hence we see that 
\begin{equation}\label{c2h<4vanishing}
h^q(\mathcal{E}(-1))=0 \textrm{ for } q>0
\textrm{ if }
c_2h\leq 3.
\end{equation}
It follows from \cite[Lemma~4.3]{MR4453350} that
\begin{equation}\label{firstvanishingforS}
\Ext^q(\mathcal{S},\mathcal{E}(2))=0 \textrm{ for } q>0.
\end{equation}
The exact sequence~\eqref{SSdual} together with the isomorphism~\eqref{canonicalIsom} 
implies that 
$\mathcal{S}^{\vee}\otimes \mathcal{E}(2)$ fits in an exact sequence
\[
0\to \mathcal{S}^{\vee}\otimes \mathcal{E}(1)\to 
\mathcal{E}(1)^{\oplus 4}\to \mathcal{S}^{\vee}\otimes\mathcal{E}(2)\to 0.
\]
It then follows from \eqref{firstvanishing} and \eqref{firstvanishingforS}
that 
\begin{equation}\label{ExtSE(1)q2ijouvanishing}
\Ext^q(\mathcal{S},\mathcal{E}(1))=0 \textrm{ for } q\geq 2.
\end{equation}

If $h^0(\mathcal{E}(-2))\neq 0$,
then $\mathcal{E}\cong \mathcal{O}(2)\oplus \mathcal{O}^{\oplus r-1}$ by 
\cite[Proposition 5.1 and Remark 5.3]{MR4453350}.
Thus we will always assume 
that 
\begin{equation}\label{h^0(E(-2))vanish}
h^0(\mathcal{E}(-2))=0
\end{equation}
in the following.
It follows from Theorem~\ref{Chern(2,2)} that 
\begin{equation}
h^q(\mathcal{E}|_{\mathbb{Q}^2})=0 \textrm{ for } q\geq 2.
\end{equation}
Moreover
\begin{equation}
h^1(\mathcal{E}|_{\mathbb{Q}^2})=
\begin{cases}
1 &\textrm{if }\mathcal{E}|_{\mathbb{Q}^2}\textrm{ belongs to Case (11) of 
Theorem~\ref{Chern(2,2)};}\\
0 &\textrm{otherwise.}
\end{cases}
\end{equation}
The vanishing~\eqref{firstvanishing} then shows that
\begin{equation}\label{h3E(-1)vanish}
h^3(\mathcal{E}(-1))=0.
\end{equation}
Moreover 
\begin{equation}\label{h2E(-1)vanish}
h^2(\mathcal{E}(-1))=0
\textrm{ unless }\mathcal{E}|_{\mathbb{Q}^2}\textrm{ belongs to Case }(11)\textrm{ of 
Theorem~\ref{Chern(2,2)}.}
\end{equation}
It follows from Theorem~\ref{Chern(2,2)} that 
\begin{equation}
h^q(\mathcal{E}(-1)|_{\mathbb{Q}^2})=0 \textrm{ for } q\geq 2.
\end{equation}
The vanishing~\eqref{h3E(-1)vanish} then shows that
\begin{equation}\label{h3E(-2)vanish}
h^3(\mathcal{E}(-2))=0.
\end{equation}
The exact sequence \eqref{SSdual} together with \eqref{canonicalIsom}
also induces the following exact sequence
\begin{equation}\label{SSdualE(-1)twist}
0\to \mathcal{S}^{\vee}\otimes\mathcal{E}(-1)
\to \mathcal{E}(-1)^{\oplus 4}\to \mathcal{S}^{\vee}\otimes\mathcal{E}
\to 0.
\end{equation}
This exact sequence \eqref{SSdualE(-1)twist} and 
an exact sequence
\begin{equation}\label{SdualE(-1)SdualEtoRestriction}
0\to \mathcal{S}^{\vee}\otimes \mathcal{E}(-1)
\to 
\mathcal{S}^{\vee}\otimes \mathcal{E}
\to 
\mathcal{S}^{\vee}\otimes \mathcal{E}|_{\mathbb{Q}^2}
\to 0
\end{equation}
will be used to compute $\Ext^q(\mathcal{S},\mathcal{E})$.

\section{The case where $\mathcal{E}|_{\mathbb{Q}^2}$ belongs to Case
(1) of Theorem~\ref{Chern(2,2)}}\label{Case(1)OfTheoremChern(2,2)}

The assumption~\eqref{h^0(E(-2))vanish} implies that this case does not arise.
Indeed, if  $\mathcal{E}|_{\mathbb{Q}^2}\cong \mathcal{O}(2,2)
\oplus \mathcal{O}^{\oplus r-1}$,
then $h^q(\mathcal{E}(-1)|_{\mathbb{Q}^2})=0$ for $q>0$.
Moreover $c_2h=0$. Hence 
$h^q(\mathcal{E}(-1))=0$ for $q>0$
by \eqref{c2h<4vanishing}.
This implies that $h^q(\mathcal{E}(-2))=0$ for $q\geq 2$.
The assumption~\eqref{h^0(E(-2))vanish} then shows that 
\[0\geq -h^1(\mathcal{E}(-2))=\chi(\mathcal{E}(-2))=
1+\dfrac{1}{2}c_3\]
by \eqref{e(-2)RR}. This contradicts \eqref{c3nonnegative}.
Hence this case does not arise.

\section{The case where $\mathcal{E}|_{\mathbb{Q}^2}$ belongs to Case
(2) of Theorem~\ref{Chern(2,2)}}

Suppose that 
\[\mathcal{E}|_{\mathbb{Q}^2}\cong \mathcal{O}(2,1)\oplus\mathcal{O}(0,1)
\oplus \mathcal{O}^{\oplus r-2}.\]
Then $h^0(\mathcal{E}(-1)|_{\mathbb{Q}^2})=2$
and $h^q(\mathcal{E}(-1)|_{\mathbb{Q}^2})=0$ for $q>0$.
Moreover $c_2h=2$. Hence 
\[h^q(\mathcal{E}(-1))=0\textrm{ for }q>0\]
by \eqref{c2h<4vanishing}.
It then follows from \eqref{e(-1)RR} 
and \eqref{c3nonnegative}
that 
$h^0(\mathcal{E}(-1))=\chi(\mathcal{E}(-1))=2+\dfrac{1}{2}c_3\geq 2$.
On the other hand, we have
$h^0(\mathcal{E}(-1))\leq h^0(\mathcal{E}(-1)|_{\mathbb{Q}^2})=2$
by \eqref{h^0(E(-2))vanish}.
Therefore 
the restriction map $H^0(\mathcal{E}(-1))\to 
H^0(\mathcal{E}(-1)|_{\mathbb{Q}^2})$
is an isomorphism,
\[h^0(\mathcal{E}(-1))=2,\textrm{ and }c_3=0.\]
Hence we see that 
\[h^q(\mathcal{E}(-2))=0 \textrm{ for all } q.\]
Since 
$\mathcal{E}(-2)|_{\mathbb{Q}^2}\cong \mathcal{O}(0,-1)\oplus\mathcal{O}(-2,-1)
\oplus \mathcal{O}(-2,-2)^{\oplus r-2}$,
we have $h^q(\mathcal{E}(-2)|_{\mathbb{Q}^2})=0$ for $q<2$
and $h^2(\mathcal{E}(-2)|_{\mathbb{Q}^2})=r-2$.
Therefore 
\[h^q(\mathcal{E}(-3))=0 \textrm{ for } q<3,
\textrm{ and }h^3(\mathcal{E}(-3))=r-2.\]
Next we will compute $\Ext^q(\mathcal{S},\mathcal{E}(-1))$.
Since 
\[\mathcal{S}^{\vee}\otimes\mathcal{E}(t)|_{\mathbb{Q}^2}
\cong (\mathcal{O}(-1,0)\oplus\mathcal{O}(0,-1))
\otimes
(\mathcal{O}(2+t,1+t)\oplus\mathcal{O}(t,1+t)\oplus\mathcal{O}(t,t)^{\oplus r-2}),
\]
we see that 
$h^q(\mathcal{S}^{\vee}\otimes\mathcal{E}(t)|_{\mathbb{Q}^2})=0$
for $q>0$ and $t\geq 0$.
Hence it follows from \eqref{ExtSE(1)q2ijouvanishing}
that 
\[\Ext^q(\mathcal{S},\mathcal{E}(-1))=0 \textrm{ for } q\geq 2.\]
Since $c_2h=2$ and $c_3=0$,
the formula~\eqref{SERR(-1)} shows that
\[h^0(\mathcal{S}^{\vee}\otimes \mathcal{E}(-1))
=h^1(\mathcal{S}^{\vee}\otimes \mathcal{E}(-1)).\]
Set $a=h^0(\mathcal{S}^{\vee}\otimes \mathcal{E}(-1))$.
Note that 
$\mathcal{S}^{\vee}\otimes \mathcal{E}(-1)$ fits in an exact sequence
\[
0\to \mathcal{S}^{\vee}\otimes \mathcal{E}(-2)\to 
\mathcal{E}(-2)^{\oplus 4}\to \mathcal{S}^{\vee}\otimes\mathcal{E}(-1)\to 0.
\]
by \eqref{SSdual} and \eqref{canonicalIsom}.
Since $h^q(\mathcal{E}(-2))=0$ for all $q$,
this exact sequence shows that 
\[h^q(\mathcal{S}^{\vee}\otimes \mathcal{E}(-2))=
\begin{cases}
0& \textrm{if }q=0, 3\\
a& \textrm{otherwise.}
\end{cases}
\]
On the other hand, we have an exact sequence
\begin{equation}\label{toorisugari}
0\to \mathcal{S}^{\vee}\otimes \mathcal{E}(-2)\to 
\mathcal{S}^{\vee}\otimes\mathcal{E}(-1)
\to (\mathcal{S}^{\vee}\otimes\mathcal{E}(-1))|_{\mathbb{Q}^2}
\to 0.
\end{equation}
Since 
\[\mathcal{S}^{\vee}\otimes\mathcal{E}(-1)|_{\mathbb{Q}^2}
\cong (\mathcal{O}(-1,0)\oplus\mathcal{O}(0,-1))
\otimes
(\mathcal{O}(1,0)\oplus\mathcal{O}(-1,0)\oplus\mathcal{O}(-1,-1)^{\oplus r-2}),
\]
we see that 
\[h^q(\mathcal{S}^{\vee}\otimes \mathcal{E}(-1)|_{\mathbb{Q}^2})=
\begin{cases}
1& \textrm{if }q=0, 1\\
0& \textrm{if }q=2, 3.
\end{cases}
\]
Hence the exact sequence \eqref{toorisugari} implies that $a=1$.

We apply 
to $\mathcal{E}(-1)$
the Bondal spectral sequence~\eqref{BondalSpectral}.
We have 
$\Ext^3(G,\mathcal{E}(-1))
\cong 
S_3^{\oplus r-2}$,
$\Ext^2(G,\mathcal{E}(-1))
=0$,
and 
$\Ext^1(G,\mathcal{E}(-1))
\cong
S_1$.
Moreover $\Hom(G,\mathcal{E}(-1))$ fits in an exact sequence
\[
0\to S_0^{\oplus 2}\to \Hom(G,\mathcal{E}(-1))\to S_1\to 0.
\]
Now Lemma~\ref{S2Arilemma} shows that 
$E_2^{p,3}=0$ unless $p=-3$, that $E_2^{-3,3}\cong \mathcal{O}(-1)^{\oplus r-2}$,
that $E_2^{p,2}=0$ for all $p$, that $E_2^{p,1}=0$ unless $p=-1$,
that $E_2^{-1,1}\cong \mathcal{S}(-1)$,
and that 
a distinguished triangle
\[\mathcal{O}^{\oplus 2}\to \Hom(G,\mathcal{E}(-1))\lotimes_A G
\to \mathcal{S}(-1)[1]\to \]
exists.
Hence we have the following exact sequence:
\begin{equation}\label{tochuu}
0\to E_2^{-1,0}\to \mathcal{S}(-1)\to \mathcal{O}^{\oplus 2}\to E_2^{0,0}\to 0.
\end{equation}
Note here that $E_2^{-1,0}\cong E_{\infty}^{-1,0}=0$.
Hence we see that $E_2^{0,0}$ is a non-zero torsion sheaf.
On the other hand, 
$\mathcal{E}(-1)$ has $E_2^{0,0}$ as a subsheaf,
so that $E_2^{0,0}$ must be torsion-free.
This is a contradiction. 
Therefore this case does not arise.

\section{The case where $\mathcal{E}|_{\mathbb{Q}^2}$ belongs to Case
(3) of Theorem~\ref{Chern(2,2)}}

Suppose that 
$\mathcal{E}|_{\mathbb{Q}^2}\cong 
\mathcal{O}(1,1)^{\oplus 2}
\oplus \mathcal{O}^{\oplus r-2}$.
Then $c_2.h=2$.
Hence $h^q(\mathcal{E}(-1))=0$ for $q>0$ by \eqref{c2h<4vanishing}.
Since $h^q(\mathcal{E}(-1)|_{\mathbb{Q}^2})=0$ for $q>0$,
this implies that $h^q(\mathcal{E}(-2))=0$ for $q\geq 2$.
The assumption~\eqref{h^0(E(-2))vanish} together with \eqref{e(-2)RR} and \eqref{c3nonnegative}
shows that 
\[0\geq -h^1(\mathcal{E}(-2))=\chi(\mathcal{E}(-2))
=\frac{1}{2}c_3\geq 0.\]
Hence $h^1(\mathcal{E}(-2))=0$ and $c_3=0$.
Thus $h^0(\mathcal{E}(-1))=h^0(\mathcal{E}(-1)|_{\mathbb{Q}^2})=2$.
Since $h^q(\mathcal{E}(-2))=0$ for any $q$, we see that 
$h^q(\mathcal{E}(-3))=h^{q-1}(\mathcal{E}(-2)|_{\mathbb{Q}^2})$
for all $q$.
Hence $h^q(\mathcal{E}(-3))=0$ unless $q=3$
and $h^3(\mathcal{E}(-3))=r-2$.
Since 
\[\mathcal{S}^{\vee}\otimes\mathcal{E}(t)|_{\mathbb{Q}^2}
\cong (\mathcal{O}(-1,0)\oplus\mathcal{O}(0,-1))
\otimes
(\mathcal{O}(1+t,1+t)^{\oplus 2}\oplus\mathcal{O}(t,t)^{\oplus r-2}),
\]
we see that 
$h^q(\mathcal{S}^{\vee}\otimes\mathcal{E}(t)|_{\mathbb{Q}^2})=0$
for $q>0$ and $t\geq -1$.
Hence it follows from \eqref{ExtSE(1)q2ijouvanishing}
that 
$\Ext^q(\mathcal{S},\mathcal{E}(-t))=0$ for  $q\geq 2$ and $t=0,1,2$.
Since the exact sequence \eqref{SSdual} together with \eqref{canonicalIsom} induces
an exact sequence
\[
0\to \mathcal{S}^{\vee}\otimes \mathcal{E}(-2)\to 
\mathcal{E}(-2)^{\oplus 4}\to \mathcal{S}^{\vee}\otimes\mathcal{E}(-1)\to 0,
\]
the vanishing $h^1(\mathcal{E}(-2))=0$ implies 
that $h^1(\mathcal{S}^{\vee}\otimes\mathcal{E}(-1))=0$.
Since $h^0(\mathcal{S}^{\vee}\otimes \mathcal{E}(-1)|_{\mathbb{Q}^2})=0$,
this implies that $h^1(\mathcal{S}^{\vee}\otimes \mathcal{E}(-2))=0$.
Hence $h^0(\mathcal{S}^{\vee}\otimes\mathcal{E}(-1))=
h^0(\mathcal{S}^{\vee}\otimes \mathcal{E}(-1)|_{\mathbb{Q}^2})=0$.
We apply to $\mathcal{E}(-1)$ 
the Bondal spectral sequence~\eqref{BondalSpectral}.
We see that $\Hom(G,\mathcal{E}(-1))\cong S_0^{\oplus 2}$,
that $\Ext^q(G,\mathcal{E}(-1))=0$ for $q=1,2$,
and that 
$\Ext^3(G,\mathcal{E}(-1))
\cong 
S_3^{\oplus r-2}$.
Hence $E_2^{p,q}=0$ unless $q=0$ or $q=3$,
$E_2^{p,0}=0$ unless $p=0$, $E_2^{0,0}=\mathcal{O}^{\oplus 2}$,
$E_2^{p,3}=0$ unless $p=-3$, and $E_2^{-3,3}=\mathcal{O}(-1)^{\oplus r-2}$
by Lemma~\ref{S2Arilemma}.
Therefore $\mathcal{E}(-1)$ fits in an exact sequence
\[
0\to \mathcal{O}^{\oplus 2}\to \mathcal{E}(-1)\to \mathcal{O}(-1)^{\oplus r-2}
\to 0.
\]
Hence $\mathcal{E}\cong \mathcal{O}(1)^{\oplus 2}\oplus\mathcal{O}^{\oplus r-2}$.
This is Case (2) of Theorem~\ref{Chern2}.

\section{The case where $\mathcal{E}|_{\mathbb{Q}^2}$ belongs to Case
(4) of Theorem~\ref{Chern(2,2)}}

Suppose that $\mathcal{E}|_{\mathbb{Q}^2}$ fits in an exact sequence
\[
0\to \mathcal{O}
\to 
\mathcal{O}(1,1)\oplus
\mathcal{O}(1,0)\oplus  
\mathcal{O}(0,1)\oplus  
\mathcal{O}^{\oplus r-2}\to \mathcal{E}|_{\mathbb{Q}^2}\to 0.
\]
Then $c_2h=3$.
Hence $h^q(\mathcal{E}(-1))=0$ for $q>0$ by \eqref{c2h<4vanishing}.
Note that $h^q(\mathcal{E}(-1)|_{\mathbb{Q}^2})=0$ for $q>0$
and that $h^0(\mathcal{E}(-1)|_{\mathbb{Q}^2})=1$.
Hence $h^q(\mathcal{E}(-2))=0$ for $q\geq 2$.
The assumption~\eqref{h^0(E(-2))vanish} together with \eqref{e(-2)RR} and \eqref{c3nonnegative}
shows that 
\[0\geq -h^1(\mathcal{E}(-2))=\chi(\mathcal{E}(-2))
=-\frac{1}{2}+\frac{1}{2}c_3\geq -\frac{1}{2}.\]
Hence $h^1(\mathcal{E}(-2))=0$ and $c_3=1$.
Now that $h^q(\mathcal{E}(-2))=0$
for any $q$, 
we have $h^q(\mathcal{E}(-3))=h^{q-1}(\mathcal{E}(-2)|_{\mathbb{Q}^2})$
for any $q$.
Set 
$a=h^1(\mathcal{E}(-2)|_{\mathbb{Q}^2})$.
Then $a=0$ or $1$,
and $h^2(\mathcal{E}(-2)|_{\mathbb{Q}^2})=r-3+a$.
Hence we see that $h^q(\mathcal{E}(-3))=0$ for $q\leq 1$,
that $h^2(\mathcal{E}(-3))=a$, and that $h^3(\mathcal{E}(-3))=r-3+a$.
Moreover the assumption~\eqref{h^0(E(-2))vanish} implies that 
$h^0(\mathcal{E}(-1))=h^0(\mathcal{E}(-1)|_{\mathbb{Q}^2})=1$.
Since $\mathcal{E}|_{\mathbb{Q}^2}(-2,-1)$ fits in an exact sequence
\[
\begin{split}
0\to \mathcal{O}(-2,-1)
\to 
\mathcal{O}(-1,0)\oplus
\mathcal{O}(-1,-1)\oplus  
\mathcal{O}(-2,0)\oplus  
&\mathcal{O}(-2,-1)^{\oplus r-2}\\
&\to \mathcal{E}|_{\mathbb{Q}^2}(-2,-1)\to 0,
\end{split}
\]
we see that $h^q(\mathcal{E}|_{\mathbb{Q}^2}(-2,-1))=0$ unless $q=1$.
Hence $h^q(\mathcal{S}^{\vee}\otimes\mathcal{E}(-1)|_{\mathbb{Q}^2})=0$
unless $q=1$.
Note that $h^q(\mathcal{S}^{\vee}\otimes\mathcal{E}(t)|_{\mathbb{Q}^2})=0$
for $t\geq 0$ and $q\geq 1$.
Hence it follows from \eqref{ExtSE(1)q2ijouvanishing}
that 
$\Ext^q(\mathcal{S},\mathcal{E}(-t))=0$ for  $q\geq 2$ and $t=0,1$.
Note that $\mathcal{S}^{\vee}\otimes \mathcal{E}(-2)$
is a subbundle of $\mathcal{E}(-2)^{\oplus 4}$ by \eqref{SSdual}.
Since $h^0(\mathcal{E}(-2))=0$, this implies that  
$h^0(\mathcal{S}^{\vee}\otimes\mathcal{E}(-2))=0$.
Since we have 
an exact sequence
\[
0\to \mathcal{S}^{\vee}\otimes \mathcal{E}(-2)
\to \mathcal{S}^{\vee}\otimes\mathcal{E}(-1)
\to \mathcal{S}^{\vee}\otimes\mathcal{E}(-1)|_{\mathbb{Q}^2}\to 0
\]
and $h^0(\mathcal{S}^{\vee}\otimes\mathcal{E}(-1)|_{\mathbb{Q}^2})=0$,
we infer that $h^0(\mathcal{S}^{\vee}\otimes\mathcal{E}(-1))=0$.
Now, from \eqref{SERR(-1)},
it follows that 
\[
-h^1(\mathcal{S}^{\vee}\otimes\mathcal{E}(-1))
=\chi(\mathcal{S}^{\vee}\otimes\mathcal{E}(-1))
=4-2\cdot 3+1=-1.
\]
We apply to $\mathcal{E}(-1)$ 
the Bondal spectral sequence~\eqref{BondalSpectral}.
We 
have
the following isomorphisms:
$\Ext^3(G,\mathcal{E}(-1))\cong S_3^{\oplus r-3+a}$;
$\Ext^2(G,\mathcal{E}(-1))\cong S_3^{\oplus a}$;
$\Ext^1(G,\mathcal{E}(-1))\cong S_1$;
$\Hom(G,\mathcal{E}(-1))\cong S_0$.
Lemma~\ref{S2Arilemma} then shows 
that $E_2^{p,q}=0$ unless $(p,q)=(-3,3)$, $(-3,2)$, $(-1,1)$, or $(0,0)$,
that $E_2^{-3,3}=\mathcal{O}(-1)^{\oplus r-3+a}$,
that $E_2^{-3,2}=\mathcal{O}(-1)^{\oplus a}$, 
that $E_2^{-1,1}=\mathcal{S}(-1)$,
and 
that $E_2^{0,0}=\mathcal{O}$.
Hence $E_3^{-3,2}=0$ and $E_3^{-1,1}$ fits in the following exact sequence:
\[0\to \mathcal{O}(-1)^{\oplus a}\to \mathcal{S}(-1)\to E_3^{-1,1}\to 0.\]
Moreover $\mathcal{E}(-1)$ has a filtration 
$\mathcal{O}\subset F(\mathcal{E}(-1))\subset \mathcal{E}(-1)$
such that $F(\mathcal{E}(-1))$ fits in the following exact sequences:
\[0\to F(\mathcal{E}(-1))\to \mathcal{E}(-1)\to \mathcal{O}(-1)^{\oplus r-3}\to 0;\]
\[0\to \mathcal{O}\to F(\mathcal{E}(-1))\to E_3^{-1,1}\to 0.\]
In particular, we see that $F(\mathcal{E}(-1))$ is a vector bundle, since so is $\mathcal{E}(-1)$.
On the other hand, since $\Ext^1(\mathcal{S}(-1),\mathcal{O})=0$,
$F(\mathcal{E}(-1))$ fits in the following exact sequence:
\[
0\to \mathcal{O}(-1)^{\oplus a}\to \mathcal{O}\oplus\mathcal{S}(-1)\to F(\mathcal{E}(-1))\to 0.
\]
This implies that $a=0$. Indeed, if $a=1$, then $F(\mathcal{E}(-1))$ cannot be a vector bundle,
since the intersection of a line
and a hyperplane section 
cannot be empty.
Therefore $F(\mathcal{E}(-1))\cong  \mathcal{O}\oplus \mathcal{S}(-1)$,
and thus $\mathcal{E}\cong \mathcal{O}(1)\oplus \mathcal{S}\oplus \mathcal{O}^{\oplus r-3}$.
This is Case (3) of Theorem~\ref{Chern2}.

\section{The case where $\mathcal{E}|_{\mathbb{Q}^2}$ belongs to Case
(5) of Theorem~\ref{Chern(2,2)}}

Suppose that $\mathcal{E}|_{\mathbb{Q}^2}$ fits in an exact sequence
\[
0\to \mathcal{O}(-1,-1)
\to 
\mathcal{O}(1,1)\oplus 
\mathcal{O}^{\oplus r}\to \mathcal{E}|_{\mathbb{Q}^2}\to 0.
\]
Then $c_2h=4$.
Note that 
\begin{equation}\label{SdualErestrictionInCase6}
h^q(\mathcal{S}^{\vee}\otimes \mathcal{E}|_{\mathbb{Q}^2})
=
\begin{cases}
4& \textrm{if}\quad  q= 0\\
0& \textrm{if}\quad  q\neq 0,
\end{cases}
\end{equation}
and that
\begin{equation}\label{CohomologyOfE(-1)RestInCase6InTh2-1}
h^q(\mathcal{E}(-1)|_{\mathbb{Q}^2})
=
\begin{cases}
1& \textrm{if}\quad  q= 0, 1\\
0& \textrm{if}\quad  q\neq 0, 1.
\end{cases}
\end{equation}
Hence we have 
\[h^0(\mathcal{E}(-1))\leq 1\]
by \eqref{h^0(E(-2))vanish}.

\subsection{Suppose that $h^0(\mathcal{E}(-1))=1$.}\label{Case6h^0(E(-1))=1} 
Lemma~\ref{c3lowerbound} then shows that $c_3\geq 4$.
Hence $H^q(\mathcal{E}(-1))$ vanishes for $q>0$ by \eqref{higherE(-1)vanish}.
The formula~\eqref{e(-1)RR} then shows that 
\[h^0(\mathcal{E}(-1))=-1+\dfrac{1}{2}c_3.\]
Thus we have $c_3=4$.
We also see that $h^q(\mathcal{E}(-2))=0$ unless $q=2$ and that $h^2(\mathcal{E}(-2))=1$
by 
\eqref{CohomologyOfE(-1)RestInCase6InTh2-1}
and \eqref{h^0(E(-2))vanish}.
We have $h^0(\mathcal{E})=r+5$.
Since we have an exact sequence 
\[
0\to \mathcal{S}^{\vee}\otimes \mathcal{E}(-2)
\to 
\mathcal{E}(-2)^{\oplus 4}
\to 
\mathcal{S}^{\vee}\otimes \mathcal{E}(-1)
\to 0,
\]
we see that $h^0(\mathcal{S}^{\vee}\otimes \mathcal{E}(-2))=0$
and that $h^3(\mathcal{S}^{\vee}\otimes \mathcal{E}(-1))=0$.
Note that 
$h^0(\mathcal{S}^{\vee}\otimes \mathcal{E}(-1)|_{\mathbb{Q}^2})=0$.
Since we have an exact sequence
\[
0\to \mathcal{S}^{\vee}\otimes \mathcal{E}(-2)
\to 
\mathcal{S}^{\vee}\otimes \mathcal{E}(-1)
\to 
\mathcal{S}^{\vee}\otimes \mathcal{E}(-1)|_{\mathbb{Q}^2}
\to 0,
\]
we infer that $h^0(\mathcal{S}^{\vee}\otimes \mathcal{E}(-1))=0$.
Since we have an exact sequence \eqref{SSdualE(-1)twist},
we see that $h^q(\mathcal{S}^{\vee}\otimes \mathcal{E})=0$ for $q\geq 2$.
The exact sequence~\eqref{SdualE(-1)SdualEtoRestriction}
together with \eqref{SdualErestrictionInCase6}
shows that $h^2(\mathcal{S}^{\vee}\otimes \mathcal{E}(-1))=0$.
Now the formula~\eqref{SERR(-1)}
shows that
\[-h^1(\mathcal{S}^{\vee}\otimes\mathcal{E}(-1))
=\chi(\mathcal{S}^{\vee}\otimes \mathcal{E}(-1))=0,\]
since  $c_3=4$ and $c_2h=4$.
The exact sequence~\eqref{SSdualE(-1)twist}
then implies that 
$h^q(\mathcal{S}^{\vee}\otimes\mathcal{E})=0$ unless $q=0$
and that $h^0(\mathcal{S}^{\vee}\otimes\mathcal{E})=4$.
Since $h^0(\mathcal{E}(-1))=1$, we have an injection $\mathcal{O}(1)\to \mathcal{E}$.
Let $\mathcal{F}$ be its cokernel: we have the following exact sequence:
\[
0\to \mathcal{O}(1)\to \mathcal{E}\to \mathcal{F}\to 0.
\]
We 
apply to $\mathcal{F}$ 
the Bondal spectral sequence~\eqref{BondalSpectral}.
We see that $h^q(\mathcal{F})=0$ unless $q=0$
and that $h^0(\mathcal{F})=r$.
Moreover $h^q(\mathcal{F}(-1))=0$ for any $q$,
$h^q(\mathcal{F}(-2))=0$ unless $q=2$,
and $h^2(\mathcal{F}(-2))=1$. 
Finally we have $h^q(\mathcal{S}^{\vee}\otimes \mathcal{F})=0$ for all $q$.
Therefore $\Ext^q(G,\mathcal{F})=0$ for $q=3$ and  $1$,
$\Ext^2(G,\mathcal{F})\cong S_3$,
and $\Hom(G,\mathcal{F})\cong S_0^{\oplus r}$.
Hence $E_2^{p,q}=0$ unless $(p.q)=(-3,2)$ or $(0,0)$,
$E_2^{-3,2}=\mathcal{O}(-1)$, and $E_2^{0,0}=\mathcal{O}^{\oplus r}$
by Lemma~\ref{S2Arilemma}.
Thus we have an exact sequence
\[
0\to \mathcal{O}(-1)\to \mathcal{O}^{\oplus r}\to \mathcal{F}\to 0.
\]
Therefore $\mathcal{E}$ belongs to Case (4) of Theorem~\ref{Chern2}.

\subsection{Suppose that $h^0(\mathcal{E}(-1))=0$.} 
Then $h^0(\mathcal{S}^{\vee}\otimes \mathcal{E}(-1))=0$ 
by  \eqref{SSdualE(-1)twist}.
Note that $H^q(\mathcal{E}|_{\mathbb{Q}^2})$ vanishes for all $q>0$.
Since $h^q(\mathcal{E})=0$ for all $q>0$ by \eqref{firstvanishing},
we have $h^q(\mathcal{E}(-1))=0$ for all $q\geq 2$.
Hence \eqref{e(-1)RR} 
and \eqref{c3nonnegative}
imply that 
\[0\geq -h^1(\mathcal{E}(-1))=\chi(\mathcal{E}(-1))=-1+\dfrac{1}{2}c_3\geq -1.\]
Therefore $(h^1(\mathcal{E}(-1)),c_3)$ is either $(0,2)$
or $(1,0)$.
Since $h^3(\mathcal{E}(-1))=0$,
we first have $h^3(\mathcal{S}^{\vee}\otimes \mathcal{E})=0$ 
by \eqref{SSdualE(-1)twist}.
Secondly we have $h^3(\mathcal{S}^{\vee}\otimes \mathcal{E}(-1))=0$
by \eqref{SdualErestrictionInCase6} and \eqref{SdualE(-1)SdualEtoRestriction}.
Thirdly we have $h^2(\mathcal{S}^{\vee}\otimes \mathcal{E})=0$ 
by \eqref{SSdualE(-1)twist} since $h^2(\mathcal{E}(-1))=0$.
Finally we have $h^2(\mathcal{S}^{\vee}\otimes \mathcal{E}(-1))=0$ 
by \eqref{SdualErestrictionInCase6} and \eqref{SdualE(-1)SdualEtoRestriction}.
Hence 
\begin{equation}\label{(3-1)h1SdualE(-1)noc3niyorubaaiwakeInCase6OfThm2-1}
-h^1(\mathcal{S}^{\vee}\otimes\mathcal{E}(-1))
=\chi(\mathcal{S}^{\vee}\otimes\mathcal{E}(-1))
=-4+c_3
\end{equation}
by \eqref{SERR(-1)}.
We apply to $\mathcal{E}$ the Bondal spectral sequence~\eqref{BondalSpectral}.

\subsubsection{Suppose that $(h^1(\mathcal{E}(-1)),c_3)=(0,2)$.}
Then  $h^1(\mathcal{S}^{\vee}\otimes \mathcal{E})=0$ 
by \eqref{SSdualE(-1)twist}.
Moreover $h^1(\mathcal{S}^{\vee}\otimes\mathcal{E}(-1))=2$
by \eqref{(3-1)h1SdualE(-1)noc3niyorubaaiwakeInCase6OfThm2-1}.
Hence we have $h^0(\mathcal{S}^{\vee}\otimes \mathcal{E})=2$ 
by \eqref{SSdualE(-1)twist}.
Since $h^q(\mathcal{E}(-1)|_{\mathbb{Q}^2})=1$ for $q=0$, $1$
and $h^q(\mathcal{E}(-1)|_{\mathbb{Q}^2})=0$ for $q=2$, $3$,
we 
infer 
that $h^q(\mathcal{E}(-2))=1$ for $q=1$, $2$, 
and that 
$h^q(\mathcal{E}(-2))=0$ 
unless $q=1$ or $2$.
Since $h^0(\mathcal{E}|_{\mathbb{Q}^2})=r+4$, we 
see that 
$h^0(\mathcal{E})=r+4$.
Therefore we have 
an exact sequence
\[
0\to S_0^{\oplus r+4}\to \Hom(G,\mathcal{E})\to S_1^{\oplus 2}\to 0
\]
and the following:
$\Ext^1(G,\mathcal{E})\cong S_3$;
$\Ext^2(G,\mathcal{E})\cong S_3$; and 
$\Ext^3(G,\mathcal{E})=0$.
Therefore Lemma~\ref{S2Arilemma} implies 
that $E_2^{p,q}=0$ unless $(p,q)=(-3,1)$, $(-3,2)$, $(-1,0)$, or $(0,0)$,
that $E_2^{-3,1}\cong \mathcal{O}(-1)$, 
that $E_2^{-3,2}\cong \mathcal{O}(-1)$,
and that there is  
an exact sequence
\[
0\to E_2^{-1,0}\to \mathcal{S}(-1)^{\oplus 2}\to \mathcal{O}^{\oplus r+4}
\to E_2^{0,0}\to 0.\]
It follows from  the Bondal spectral sequence~\eqref{BondalSpectral}
that  $E_2^{-3,1}\cong E_2^{-1,0}$,
that $E_2^{-3,2}\cong E_3^{-3,2}$, that $E_2^{0,0}\cong E_3^{0,0}$,
and that there is  
an exact sequence
\[0\to E_3^{-3,2}\to E_3^{0,0}\to \mathcal{E}\to 0.\]
Hence we obtain the following exact sequences:
\[
0\to \mathcal{O}(-1)\to \mathcal{S}(-1)^{\oplus 2}\to \mathcal{O}^{\oplus r+4}
\to E_3^{0,0}\to 0;\]
\[0\to \mathcal{O}(-1)\to E_3^{0,0}\to \mathcal{E}\to 0. 
\]
The latter exact sequence 
shows 
that 
$E_3^{0,0}$ is a vector bundle since so is $\mathcal{E}$.
The former exact sequence then splits into the following two exact sequences
with $\mathcal{G}$ a vector bundle of rank three:
\[0\to \mathcal{O}(-1)\to \mathcal{S}(-1)^{\oplus 2}\to \mathcal{G}\to 0;\]
\[0\to \mathcal{G}\to \mathcal{O}^{\oplus r+4}\to E_3^{0,0}\to 0.\]
The latter exact sequence shows 
that the dual $\mathcal{G}^{\vee}$ of $\mathcal{G}$ is globally generated.
The injection $\mathcal{O}(-1)\to \mathcal{S}(-1)^{\oplus 2}$ 
in the former exact sequence 
gives rise to 
two global sections $s_0$, $s_1$ of $\mathcal{S}$,
and we infer that $(s_0)_0\cap (s_1)_0=\emptyset$ since $\mathcal{G}$ is a vector bundle.
Hence $s_0$ and $s_1$ are linearly independent.
We also see that $\mathcal{G}^{\vee}$
fits in the following exact sequence:
\[0\to \mathcal{G}^{\vee}\to \mathcal{S}^{\oplus 2}\to \mathcal{O}(1)\to 0.\]
Note that the induced map $H^0(\mathcal{S})^{\oplus 2}\to H^0(\mathcal{O}(1))$ 
sends $(t_0,t_1)$ to $s_0\wedge t_0+s_1\wedge t_1$,
and 
Lemma~\ref{wedgeproductOfS}
implies that it is surjective.
Therefore $h^0(\mathcal{G}^{\vee})=3$.
Since $\mathcal{G}^{\vee}$ is a globally generated vector bundle of rank three, 
this  implies that $\mathcal{G}^{\vee}\cong \mathcal{O}^{\oplus 3}$.
On the other hand, the exact sequence above shows that $c_1(\mathcal{G}^{\vee})=1$.
This is a contradiction. Hence 
the case $(h^1(\mathcal{E}(-1)),c_3)=(0,2)$
does not arise.

\subsubsection{Suppose that $(h^1(\mathcal{E}(-1)),c_3)=(1,0)$.}
Then $h^1(\mathcal{S}^{\vee}\otimes\mathcal{E}(-1))=4$
by \eqref{(3-1)h1SdualE(-1)noc3niyorubaaiwakeInCase6OfThm2-1}.
Set $a:=h^0(\mathcal{S}^{\vee}\otimes \mathcal{E})$.
Then $h^1(\mathcal{S}^{\vee}\otimes \mathcal{E})=a$ by 
\eqref{SSdualE(-1)twist}.
From \eqref{CohomologyOfE(-1)RestInCase6InTh2-1},
it follows 
that $h^q(\mathcal{E}(-2))=0$ unless $q=1$ or $2$
and that $(h^1(\mathcal{E}(-2)),h^2(\mathcal{E}(-2)))=(1,0)$ or $(2,1)$.
Note also that $h^0(\mathcal{E})=r+3$.

\paragraph{Suppose that $(h^1(\mathcal{E}(-2)),h^2(\mathcal{E}(-2)))=(1,0)$}
\label{NewCase(5)OfTh2.2h^0(E(-1))=0c3=0}
Then we see that $\Ext^3(G,\mathcal{E})=0$, that $\Ext^2(G,\mathcal{E})=0$,
that $\Ext^1(G,\mathcal{E})$ has a filtration $S_1^{\oplus a}\subset F\subset \Ext^1(G,\mathcal{E})$
of right $A$-modules such that the following sequences are exact:
\[
0\to F\to \Ext^1(G,\mathcal{E})\to S_3\to 0;\]
\[
0\to S_1^{\oplus a}\to F\to S_2\to 0,\]
and that $\Hom(G,\mathcal{E})$ fits in the following exact sequence 
of right $A$-modules:
\[
0\to S_0^{\oplus r+3}\to \Hom(G,\mathcal{E})\to S_1^{\oplus a}\to 0.
\]
These exact sequences induce the following distinguished triangles by Lemma~\ref{S2Arilemma}:
\[F\lotimes_AG\to \Ext^1(G,\mathcal{E})\lotimes_AG\to \mathcal{O}(-1)[3]\to;\]
\[\mathcal{S}(-1)[1]^{\oplus a}\to F\lotimes_AG\to T_{\mathbb{P}^4}(-2)|_{\mathbb{Q}^3}[2]\to;\]
\[\mathcal{O}^{\oplus r+3}\to \Hom(G,\mathcal{E})\lotimes_AG\to \mathcal{S}(-1)[1]^{\oplus a}\to.\]
By taking cohomologies, we obtain the following exact sequences
by \eqref{canonicalIsom}:
\[0\to E_2^{-3,1}\to \mathcal{O}(-1)\to \mathcal{H}^{-2}(F\lotimes_AG)
\to E_2^{-2,1}\to 0;\]
\begin{equation}\label{E_2^-1,1resolutionInCaseh1E(-2)=1,c_3=0}
0\to \mathcal{H}^{-2}(F\lotimes_AG)\to 
T_{\mathbb{P}^4}(-2)|_{\mathbb{Q}^3}
\xrightarrow{\psi_a}
\mathcal{S}^{\vee\oplus a}\to E_2^{-1,1}\to 0;
\end{equation}
\[0\to E_2^{-1,0}\to \mathcal{S}^{\vee\oplus a}
\to\mathcal{O}^{\oplus r+3}\to E_2^{0,0}\to 0.\]
Moreover we have the following exact sequences:
\[0\to E_2^{-2,1}\to E_2^{0,0}\to E_3^{0,0}\to 0;\]
\[0\to E_2^{-3,1}\to E_2^{-1,0}\to 0;\]
\[0\to E_3^{0,0}\to \mathcal{E}\to E_2^{-1,1}\to 0.\]
Since $\mathcal{E}$ is nef, $E_2^{-1,1}$ cannot admit negative degree quotients.
Hence it follows from Lemma~\ref{SupposedVeryImportant} that $a=0$.
Then $E_2^{-1,1}=0$, $E_2^{-3,1}=E_2^{-1,0}=0$, $E_2^{0,0}=\mathcal{O}^{\oplus r+3}$, 
and we have the following exact sequence:
\[0\to \mathcal{O}(-1)\to T_{\mathbb{P}^4}(-2)|_{\mathbb{Q}^3}\to E_2^{-2,1}\to 0.\]
Hence $\mathcal{E}$ fits in the following exact sequence:
\begin{equation}\label{procase(14)}
0\to \mathcal{O}(-1)\to T_{\mathbb{P}^4}(-2)|_{\mathbb{Q}^3}
\to \mathcal{O}^{\oplus r+3}\to \mathcal{E}\to 0.
\end{equation}
Since $T_{\mathbb{P}^4}(-2)|_{\mathbb{Q}^3}$ fits in 
an exact sequence
\[
0\to \mathcal{O}(-2)\to \mathcal{O}(-1)^{\oplus 5}\to T_{\mathbb{P}^4}(-2)|_{\mathbb{Q}^3}\to 0,
\]
the exact sequence~\eqref{procase(14)} induces the following exact sequence:
\[
0\to \mathcal{O}(-2)\to \mathcal{O}(-1)^{\oplus 4}\to \mathcal{O}^{\oplus r+3}\to \mathcal{E}\to 0.
\]
This is Case (9) of Theorem~\ref{Chern2}.

\paragraph{Suppose that $(h^1(\mathcal{E}(-2)),h^2(\mathcal{E}(-2)))=(2,1)$}
Then we see that $\Ext^3(G,\mathcal{E})=0$, 
that $\Ext^2(G,\mathcal{E})\cong S_3$,
that $\Ext^1(G,\mathcal{E})$ has a filtration $S_1^{\oplus a}\subset F\subset \Ext^1(G,\mathcal{E})$
of right $A$-modules such that the following sequences are exact:
\[
0\to F\to \Ext^1(G,\mathcal{E})\to S_3^{\oplus 2}\to 0;\]
\[
0\to S_1^{\oplus a}\to F\to S_2\to 0,\]
and that $\Hom(G,\mathcal{E})$ fits in the following exact sequence 
of right $A$-modules:
\[
0\to S_0^{\oplus r+3}\to \Hom(G,\mathcal{E})\to S_1^{\oplus a}\to 0.
\]
Lemma~\ref{S2Arilemma} implies that 
$\Ext^2(G,\mathcal{E})\lotimes_AG\cong \mathcal{O}(-1)[3]$
and that the three exact sequences above 
induce the following distinguished triangles:
\[F\lotimes_AG\to \Ext^1(G,\mathcal{E})\lotimes_AG\to \mathcal{O}(-1)^{\oplus 2}[3]\to;\]
\[\mathcal{S}(-1)[1]^{\oplus a}\to F\lotimes_AG\to T_{\mathbb{P}^4}(-2)|_{\mathbb{Q}^3}[2]\to;\]
\[\mathcal{O}^{\oplus r+3}\to \Hom(G,\mathcal{E})\lotimes_AG\to \mathcal{S}(-1)[1]^{\oplus a}\to.\]
By taking cohomologies, we see that $E_2^{p,2}=0$ unless $p=-3$,
that $E_2^{-3,2}\cong \mathcal{O}(-1)$, and that we have  the following exact sequences
by \eqref{canonicalIsom}:
\begin{equation}\label{MostComplicated}
0\to E_2^{-3,1}\to \mathcal{O}(-1)^{\oplus 2}\to \mathcal{H}^{-2}(F\lotimes_AG)
\to E_2^{-2,1}\to 0;
\end{equation}
\begin{equation}\label{OneOfApplicationOfpsi}
0\to \mathcal{H}^{-2}(F\lotimes_AG)\to 
T_{\mathbb{P}^4}(-2)|_{\mathbb{Q}^3}
\xrightarrow{\psi_a}
\mathcal{S}^{\vee\oplus a}\to E_2^{-1,1}\to 0;
\end{equation}
\[0\to E_2^{-1,0}\to \mathcal{S}^{\vee\oplus a}
\to\mathcal{O}^{\oplus r+3}\to E_2^{0,0}\to 0.\]
Moreover we have the following exact sequences:
\begin{equation}\label{oneOfapplicationOfpi}
0\to E_3^{-3,2}\to E_2^{-3,2}\xrightarrow{\pi} E_2^{-1,1}\to E_3^{-1,1}\to 0;
\end{equation}
\[0\to E_2^{-2,1}\to E_2^{0,0}\to E_3^{0,0}\to 0;\]
\[0\to E_2^{-3,1}\to E_2^{-1,0}\to 0;\]
\[0\to E_3^{-3,2}\to E_3^{0,0}\to E_4^{0,0}\to 0;\]
\[0\to E_4^{0,0}\to \mathcal{E}\to E_3^{-1,1}\to 0.\]
Since $\mathcal{E}$ is nef, $E_3^{-1,1}$ cannot admit negative degree quotients.
If $a>0$, it follows from Lemmas~\ref{SupposedVeryImportantPart2}
and \ref{SupposedVeryImportant} 
that $a=1$, that $E_3^{-1,1}=0$,  
that $E_3^{-3,2}\cong \mathcal{I}_L(-1)$ for some line 
$L\subset\mathbb{Q}^3$,
that $E_2^{-1,1}\cong \mathcal{O}_L(-1)$,
and that $ \mathcal{H}^{-2}(F\lotimes_AG)\cong \mathcal{O}(-1)^{\oplus 2}$.
Therefore $\mathcal{E}\cong E_4^{0,0}$ and 
the exact sequence \eqref{MostComplicated} becomes 
the following exact sequence:
\[0\to E_2^{-3,1}\to \mathcal{O}(-1)^{\oplus 2}\to \mathcal{O}(-1)^{\oplus 2}
\to E_2^{-2,1}\to 0.\]
Set $\mathcal{O}(-1)^{\oplus b}\cong E_2^{-3,1}$ for some non-negative integer $b\leq 2$.
Then $E_2^{-2,1}\cong \mathcal{O}(-1)^{\oplus b}$ and we have the following exact sequences:
\[0\to \mathcal{O}(-1)^{\oplus b}\to \mathcal{S}^{\vee}
\to\mathcal{O}^{\oplus r+3}\to E_2^{0,0}\to 0;\]
\[0\to\mathcal{O}(-1)^{\oplus b} \to E_2^{0,0}\to E_3^{0,0}\to 0;\]
\[0\to \mathcal{I}_L(-1)\to E_3^{0,0}\to \mathcal{E}\to 0.\]
Since $\mathcal{O}^{\oplus r+3}$ is torsion-free
and $\mathcal{S}^{\vee}$ is not isomorphic to $\mathcal{O}^{\oplus 2}$, we see that $b\leq 1$.
Note here that $E_3^{0,0}$ is torsion-free, and so is $E_2^{0,0}$.
If $b=1$, we get 
an exact sequence
\[0\to \mathcal{I}_M\to \mathcal{O}^{\oplus r+3}\to E_2^{0,0}\to 0\]
for some line $M$
in $\mathbb{Q}^3$.
Since we can extend $\mathcal{I}_M\to \mathcal{O}^{\oplus r+3}$ to 
an injection $\mathcal{O}\to \mathcal{O}^{\oplus r+3}$ by taking double duals,
we infer that $E_2^{0,0}$ contains a torsion sheaf $\mathcal{O}_M$.
This is a contradiction. Hence $b=0$, and $E_2^{0,0}$ fits in the following exact sequences:
\[0\to \mathcal{S}^{\vee}
\to\mathcal{O}^{\oplus r+3}\to E_2^{0,0}\to 0;\]
\[0\to \mathcal{I}_L(-1)\to E_2^{0,0}\to \mathcal{E}\to 0.\]
Since $\mathcal{I}_L(-1)$ is not locally free, so is $E_2^{0,0}$.
Hence 
the former exact sequence together with \eqref{SSdual} implies that 
$E_2^{0,0}\cong \mathcal{I}_M(1)\oplus \mathcal{O}^{\oplus r}$ for some line 
$M$ in $\mathbb{Q}^3$.
By taking the double dual of the injection $\mathcal{I}_L(-1)\to E_2^{0,0}$
in the latter exact sequence,
we obtain a commutative 
diagram with exact rows
\[
\xymatrix{
0\ar[r]&\mathcal{I}_L(-1)\ar[d]\ar[r] &E_2^{0,0}\ar[d]\ar[r] 
&\mathcal{E}\ar[r] \ar[d]   & 0    \\
0\ar[r]&\mathcal{O}(-1) \ar[r]      
&\mathcal{O}(1)\oplus \mathcal{O}^{\oplus r}\ar[r] &\mathcal{F}
\ar[r]              & 0    
}
\]
for some coherent sheaf $\mathcal{F}$.
Note that $\Tor_q^{\mathcal{O}_p}(\mathcal{F}_p,k(p))=0$ for $q\geq 2$
and any point $p$.
Since $\mathcal{E}$ is torsion-free, 
the snake lemma implies that $L=M$ and that 
we have an exact sequence
\[0\to \mathcal{O}_L(-1)\to \mathcal{O}_M(1)
\to \mathcal{O}_Z\to 0\]
for some closed subscheme 
$Z$ of length two.
Moreover $\mathcal{E}$ fits in the following exact sequence:
\[0\to \mathcal{E}\to \mathcal{F}\to \mathcal{O}_Z\to 0.\]
For an associated point $p$ of $Z$, the exact sequence above induces
a coherent sheaf $\mathcal{G}$ and 
the following exact sequence:
\[
0\to \mathcal{E}\to \mathcal{G}\to k(p)\to 0.
\]
Since $\Tor_3^{\mathcal{O}_p}(\mathcal{F}_p,k(p))=0$,
we have $\Tor_3^{\mathcal{O}_p}(\mathcal{G}_p,k(p))=0$.
Note that  
$\Tor_q^{\mathcal{O}_p}(\mathcal{E}_p,k(p))=0$ for $q\geq 1$.
Hence 
$\Tor_3^{\mathcal{O}_p}(k(p),k(p))=0$,
which contradicts the fact that $\Tor_3^{\mathcal{O}_p}(k(p),k(p))=1$.
Therefore $a$ cannot be positive: $a=0$.
Thus $0=E_2^{-1,1}=E_3^{-1,1}$, 
$0=E_2^{-1,0}
=
E_2^{-3,1}$,
$\mathcal{O}^{\oplus r+3}\cong E_2^{0,0}$, $E_3^{-3,2}\cong E_2^{-3,2}\cong \mathcal{O}(-1)$,
$E_4^{0,0}\cong \mathcal{E}$, and we have the following exact sequences:
\[0\to\mathcal{O}(-1)^{\oplus 2}\to T_{\mathbb{P}^4}(-2)|_{\mathbb{Q}^3}
\to E_2^{-2,1}\to 0;\]
\[0\to E_2^{-2,1}\to \mathcal{O}^{\oplus r+3}\to E_3^{0,0}\to 0;\]
\[0\to \mathcal{O}(-1)\to E_3^{0,0}\to \mathcal{E}\to 0.\]
Since $T_{\mathbb{P}^4}(-2)|_{\mathbb{Q}^3}$ fits in 
an exact sequence
\[
0\to \mathcal{O}(-2)\to \mathcal{O}(-1)^{\oplus 5}\to T_{\mathbb{P}^4}(-2)|_{\mathbb{Q}^3}\to 0,
\]
$E_2^{-2,1}$ has a resolution of the following form:
\[
0\to \mathcal{O}(-2)\to \mathcal{O}(-1)^{\oplus 3}\to E_2^{-2,1}\to 0.
\]
Therefore we see that $\mathcal{E}$ belongs to Case (9) of Theorem~\ref{Chern2}.

\section{The case where $\mathcal{E}|_{\mathbb{Q}^2}$ belongs to Case
(6) of Theorem~\ref{Chern(2,2)}}

Suppose that $\mathcal{E}|_{\mathbb{Q}^2}$
fits in one of the following exact 
sequence:
\[0\to \mathcal{O}^{\oplus 2}\to \mathcal{O}(1,0)^{\oplus 2}\oplus
\mathcal{O}(0,1)^{\oplus 2}\oplus  
\mathcal{O}^{\oplus r-2}\to \mathcal{E}|_{\mathbb{Q}^2}\to 0.\]
Then $h^q(\mathcal{E}(-1)|_{\mathbb{Q}^2})=0$ for any $q$,
and $c_2h=4$.
Since $h^q(\mathcal{E}|_{\mathbb{Q}^2})=0$ for any $q>0$,
the vanishing \eqref{firstvanishing} shows that $h^q(\mathcal{E}(-t))=0$ for $q\geq 2$ and $t=1,2$.
The assumption~\eqref{h^0(E(-2))vanish} together with \eqref{e(-2)RR} and \eqref{c3nonnegative}
shows that 
\[0\geq -h^1(\mathcal{E}(-2))=\chi(\mathcal{E}(-2))=-1+\frac{1}{2}c_3\geq -1.\]
Therefore we have two cases:
$(h^1(\mathcal{E}(-2)),c_3)=(0,2)$ or $(1,0)$.
Note here that $h^q(\mathcal{E}(-1))=h^q(\mathcal{E}(-2))$ for any $q$.
In particular, $h^0(\mathcal{E}(-1))=h^0(\mathcal{E}(-2))=0$ by
\eqref{h^0(E(-2))vanish}.

We claim here that $h^q(\mathcal{S}^{\vee}\otimes \mathcal{E}(t)|_{\mathbb{Q}^2})=0$
for $q>0$ and $t\geq 0$.
Indeed, we see that 
\[h^q((\mathcal{O}(-1,0)\oplus\mathcal{O}(0,-1))\otimes
(\mathcal{O}(1+t,t)^{\oplus 2}\oplus\mathcal{O}(t,1+t)^{\oplus 2}
\oplus \mathcal{O}(t,t)^{\oplus r-3}))=0\]
for $q>0$ and $t\geq 0$.
Hence we obtain the claim.
Then it follows from \eqref{ExtSE(1)q2ijouvanishing}
that 
\begin{equation}\label{(3-1)casevanishing}
h^q(\mathcal{S}^{\vee}\otimes\mathcal{E}(t))=0\textrm{ for }q\geq 2
\textrm{ and }t=0, -1.
\end{equation}
Since $h^0(\mathcal{E}(-1))=0$,
the exact sequence \eqref{SSdualE(-1)twist} together with 
\eqref{(3-1)casevanishing}
shows that $h^q(\mathcal{S}^{\vee}\otimes \mathcal{E}(-1))=0$
unless $q=1$.
Hence 
\begin{equation}\label{(3-1)h1SdualE(-1)noc3niyorubaaiwake}
-h^1(\mathcal{S}^{\vee}\otimes\mathcal{E}(-1))
=\chi(\mathcal{S}^{\vee}\otimes\mathcal{E}(-1))
=-4+c_3
\end{equation}
by \eqref{SERR(-1)}.

\subsection{Suppose that $(h^1(\mathcal{E}(-2)),c_3)=(0,2)$.} 
Then $h^q(\mathcal{E}(-2))=0$ for any $q$.
Hence $h^q(\mathcal{E}(-1))=0$ for any $q$.
Set $a=h^1(\mathcal{E}(-2)|_{\mathbb{Q}^2})$.
Then $a\leq 2$ and $h^2(\mathcal{E}(-2)|_{\mathbb{Q}^2})=r-4+a$.
Thus $h^2(\mathcal{E}(-3))=a$, $h^3(\mathcal{E}(-3))=r-4+a$, 
and $h^q(\mathcal{E}(-3))=0$ unless $q=2$ or $3$.
It follows from \eqref{(3-1)h1SdualE(-1)noc3niyorubaaiwake}
that $h^1(\mathcal{S}^{\vee}\otimes\mathcal{E}(-1))=2$.
We apply to 
$\mathcal{E}(-1)$
the Bondal spectral sequence~\eqref{BondalSpectral}.
We have $\Ext^3(G,\mathcal{E}(-1))\cong S_3^{\oplus r-4+a}$,
$\Ext^2(G,\mathcal{E}(-1))\cong S_3^{\oplus a}$, 
$\Ext^1(G,\mathcal{E}(-1))\cong S_1^{\oplus 2}$,
and $\Hom(G,\mathcal{E}(-1))=0$.
Lemma~\ref{S2Arilemma} then shows that 
$E_2^{-3,3}\cong \mathcal{O}(-1)^{\oplus r-4+a}$,
that $E_2^{-3,2}\cong \mathcal{O}(-1)^{\oplus a}$,
that $E_2^{-1,1}\cong \mathcal{S}(-1)^{\oplus 2}$,
and $E_2^{p,q}=0$ unless $(p,q)=(-3,3)$, $(-3,2)$, or $(-1,1)$.
Then $\mathcal{E}(-1)$ fits in the $(-1)$-twist of the following exact sequence:
\begin{equation}\label{(2,0)(0,2)ortwo(0,1)baaiwake}
0\to \mathcal{O}^{\oplus a}
\to \mathcal{S}^{\oplus 2}\to \mathcal{E}\to \mathcal{O}^{\oplus r-4+a}
\to 0.
\end{equation}
This sequence
splits into the following two exact sequences:
\[
0\to \mathcal{O}^{\oplus a}
\to \mathcal{S}^{\oplus 2}\to \mathcal{F}\to 0;\]
\[0\to \mathcal{F}\to \mathcal{E}\to \mathcal{O}^{\oplus r-4+a}
\to 0,\]
where $\mathcal{F}$ is a globally generated vector bundle of rank $4-a$.
We claim here that $a\leq 1$.
Indeed, if $a=2$, then 
we have the following exact sequences:
\[0\to \mathcal{O}
\to \mathcal{S}^{\oplus 2}\to \mathcal{G}\to 0;\]
\[0\to \mathcal{O}\to \mathcal{G}\to \mathcal{F}\to 0,\]
where $\mathcal{G}$ is a globally generated vector bundle 
of rank $3$.
Since $\mathcal{F}$ is a vector bundle, $\mathcal{G}$ must have a nowhere vanishing global section,
and thus $c_3(\mathcal{G})=0$.
On the other hand, $c_3(\mathcal{G})=c_3(\mathcal{S}^{\oplus 2})=2c_2(\mathcal{S})h=2$.
This is a contradiction. Hence the case $a=2$ does not arise.
Now note that 
$\mathcal{E}$ is isomorphic to 
$\mathcal{F}\oplus \mathcal{O}^{\oplus r-4+a}$
since $h^1(\mathcal{F})=0$.
Therefore $\mathcal{E}$ fits in an exact sequence
\[
0\to \mathcal{O}^{\oplus a}\to \mathcal{S}^{\oplus 2}\oplus \mathcal{O}^{\oplus r-4+a}
\to \mathcal{E}\to 0,
\]
where the composite of the inclusion 
$\mathcal{O}^{\oplus a}\to \mathcal{S}^{\oplus 2}\oplus \mathcal{O}^{\oplus r-4+a}$
and the projection
$\mathcal{S}^{\oplus 2}\oplus \mathcal{O}^{\oplus r-4+a}
\to
\mathcal{O}^{\oplus r-4+a}$
is zero.
This is Case (5) of Theorem~\ref{Chern2}.

\subsection{Suppose that $(h^1(\mathcal{E}(-2)),c_3)=(1,0)$.}
Then $h^1(\mathcal{E}(-1))=1$. 
Hence $h^0(\mathcal{E})=h^0(\mathcal{E}|_{\mathbb{Q}^2})-1=r+3$.
It follows from \eqref{(3-1)h1SdualE(-1)noc3niyorubaaiwake}
that $h^1(\mathcal{S}^{\vee}\otimes\mathcal{E}(-1))=4$.
Set $a=h^0(\mathcal{S}^{\vee}\otimes \mathcal{E})$.
Then the exact sequence \eqref{SSdualE(-1)twist}
shows that $a\leq 4$, that $h^q(\mathcal{S}^{\vee}\otimes \mathcal{E})=0$
unless $q=0$ or $1$, and that $h^1(\mathcal{S}^{\vee}\otimes\mathcal{E})=a$.
Hence we have $\Ext^q(G,\mathcal{E})=0$ for $q=2$ and $3$,
and $\Hom(G,\mathcal{E})$ fits in an exact sequence
\[0\to S_0^{\oplus r+3}\to \Hom(G,\mathcal{E})\to S_1^{\oplus a}\to 0.\]
Moreover $\Ext^1(G,\mathcal{E})$ has a filtration $S_1^{\oplus a}\subset F\subset \Ext^1(G,\mathcal{E})$
of right $A$-modules such that the following sequences are exact:
\[
0\to F\to \Ext^1(G,\mathcal{E})\to S_3\to 0;\]
\[
0\to S_1^{\oplus a}\to F\to S_2\to 0.\]
Now the structures of right $A$-modules $\Ext^q(G,\mathcal{E})$'s  are the same as 
those of $\Ext^q(G,\mathcal{E})$'s in 
Section~\ref{NewCase(5)OfTh2.2h^0(E(-1))=0c3=0},
and we conclude that $\mathcal{E}$ belongs to Case (9) of Theorem~\ref{Chern2}.

\section{The case where $\mathcal{E}|_{\mathbb{Q}^2}$ belongs to Case
(7) of Theorem~\ref{Chern(2,2)}}
Suppose that $\mathcal{E}|_{\mathbb{Q}^2}$ fits in 
the following exact sequence:
\[
0\to \mathcal{O}(-1,-1)
\oplus \mathcal{O}(-1,0)
\oplus \mathcal{O}(0,-1)
\to 
\mathcal{O}^{\oplus r+3}\to \mathcal{E}|_{\mathbb{Q}^2}\to 0.
\]
Then $c_2h=5$.
It then follows from \eqref{tautologicalSelfIntersectionNonNegative}
that $c_3\geq 4$.
Note that 
\begin{equation}\label{SdualErestrictionInCase9}
h^q(\mathcal{S}^{\vee}\otimes \mathcal{E}|_{\mathbb{Q}^2})
=
\begin{cases}
2& \textrm{if}\quad  q= 0\\
0& \textrm{if}\quad  q\neq 0,
\end{cases}
\end{equation}
and that
\begin{equation}\label{CohomologyOfE(-1)RestInCase9InTh2-1}
h^q(\mathcal{E}(-1)|_{\mathbb{Q}^2})
=
\begin{cases}
1& \textrm{if}\quad  q= 1\\
0& \textrm{if}\quad  q\neq 1.
\end{cases}
\end{equation}
Hence we have 
\[h^0(\mathcal{E}(-1))=0\]
by \eqref{h^0(E(-2))vanish}.
Note that 
$H^q(\mathcal{E}|_{\mathbb{Q}^2})$ vanishes for any $q>0$.
Since $h^q(\mathcal{E})=0$ for any $q>0$ by \eqref{firstvanishing},
we have $h^q(\mathcal{E}(-1))=0$ for any $q\geq 2$.
Hence 
it follows from \eqref{e(-1)RR} that  
\[0\geq  -h^1(\mathcal{E}(-1))=\chi(\mathcal{E}(-1))=-\dfrac{5}{2}+\dfrac{1}{2}c_3
\geq -\dfrac{1}{2}.\]
Therefore $c_3=5$ and $h^1(\mathcal{E}(-1))=0$.
Now it follows from \eqref{SSdualE(-1)twist}  that
$h^q(\mathcal{S}^{\vee}\otimes\mathcal{E})=h^{q+1}(\mathcal{S}^{\vee}\otimes\mathcal{E}(-1))$
for any $q$.
In particular,  $h^0(\mathcal{S}^{\vee}\otimes \mathcal{E}(-1))=0$.
Moreover 
$h^{q+1}(\mathcal{S}^{\vee}\otimes\mathcal{E}(-1))
=h^{q+1}(\mathcal{S}^{\vee}\otimes\mathcal{E})$
for $q\geq 1$
by \eqref{SdualErestrictionInCase9} and \eqref{SdualE(-1)SdualEtoRestriction}.
Hence $h^q(\mathcal{S}^{\vee}\otimes\mathcal{E})=0$ for $q\geq 1$
and $h^{q}(\mathcal{S}^{\vee}\otimes\mathcal{E}(-1))=0$ for $q\geq 2$.
Therefore 
\[
-h^1(\mathcal{S}^{\vee}\otimes\mathcal{E}(-1))
=\chi(\mathcal{S}^{\vee}\otimes\mathcal{E}(-1))
=-6+c_3=-1
\]
by \eqref{SERR(-1)}.
Thus 
$h^0(\mathcal{S}^{\vee}\otimes \mathcal{E})=1$.
We apply to $\mathcal{E}$ the Bondal spectral sequence~\eqref{BondalSpectral}.
From 
\eqref{CohomologyOfE(-1)RestInCase9InTh2-1},
it follows that 
$h^q(\mathcal{E}(-2))=0$ unless $q=2$
and that $h^2(\mathcal{E}(-2))=1$. 
Since $h^0(\mathcal{E}|_{\mathbb{Q}^2})=r+3$, we see that $h^0(\mathcal{E})=r+3$.
Hence we have 
an exact sequence
\[
0\to S_0^{\oplus r+3}\to \Hom(G,\mathcal{E})\to S_1\to 0,
\]
and the following:
$\Ext^q(G,\mathcal{E})
=
0$ for $q=1,3$;
$\Ext^2(G,\mathcal{E})\cong S_3$.
Therefore Lemma~\ref{S2Arilemma} implies 
that $E_2^{p,q}=0$ unless $(p,q)=(-3,2)$ or $(0,0)$,
that $E_2^{-3,2}
\cong 
\mathcal{O}(-1)$,
and that there is  the following exact sequence:
\[
0\to \mathcal{S}(-1)\to \mathcal{O}^{\oplus r+3}
\to E_2^{0,0}\to 0.\]
Note that we have 
the following exact sequence:
\[0\to E_2^{-3,2}\to E_2^{0,0}\to \mathcal{E}\to 0.\]
Since $\Ext^1(\mathcal{O}(-1),\mathcal{S}(-1))=0$, this implies that 
$\mathcal{E}$ fits in the following exact sequence:
\[
0\to \mathcal{S}(-1)\oplus \mathcal{O}(-1)
\to \mathcal{O}^{\oplus r+3}\to \mathcal{E}\to 0.
\]
This is Case (6) of Theorem~\ref{Chern2}.

\section{The case where $\mathcal{E}|_{\mathbb{Q}^2}$ belongs to Case
(8) of Theorem~\ref{Chern(2,2)}}
Suppose that $\mathcal{E}|_{\mathbb{Q}^2}$ fits in either of the following exact sequences:
\[
0\to \mathcal{O}(-1,-2)
\to 
\mathcal{O}(1,0)\oplus
\mathcal{O}^{\oplus r}\to \mathcal{E}|_{\mathbb{Q}^2}\to 0;
\]
Then $c_2h=6$.
It then follows from \eqref{tautologicalSelfIntersectionNonNegative}
that $c_3\geq 8$.
Note that
\begin{equation}\label{CohomologyOfE(-1)RestInCase11InTh2-1}
h^q(\mathcal{E}(-1)|_{\mathbb{Q}^2})
=
\begin{cases}
2& \textrm{if}\quad  q= 1\\
0& \textrm{if}\quad  q\neq 1,
\end{cases}
\end{equation}
and that
\begin{equation}\label{SdualErestrictionInCase11}
h^q(\mathcal{S}^{\vee}\otimes \mathcal{E}|_{\mathbb{Q}^2})
=
\begin{cases}
1& \textrm{if}\quad  q=0, 1\\
0& \textrm{if}\quad  q\neq 0, 1.
\end{cases}
\end{equation}
Hence we have 
\[h^0(\mathcal{E}(-1))=0\]
by \eqref{h^0(E(-2))vanish}.
Note that 
$H^q(\mathcal{E}|_{\mathbb{Q}^2})$ vanishes for all $q>0$.
Since 
$h^q(\mathcal{E})=0$ for all $q>0$ by \eqref{firstvanishing},
we have $h^q(\mathcal{E}(-1))=0$ for all $q\geq 2$.
It follows from \eqref{e(-1)RR} that  
\[0\geq -h^1(\mathcal{E}(-1))=\chi(\mathcal{E}(-1))=-4+\dfrac{1}{2}c_3
\geq 0.\]
Therefore $c_3=8$
and $h^1(\mathcal{E}(-1))=0$.
Now it follows from \eqref{SSdualE(-1)twist}  that
$h^q(\mathcal{S}^{\vee}\otimes\mathcal{E})=h^{q+1}(\mathcal{S}^{\vee}\otimes\mathcal{E}(-1))$
for any $q$.
In particular,  $h^0(\mathcal{S}^{\vee}\otimes \mathcal{E}(-1))=0$.
Moreover 
$h^{q+1}(\mathcal{S}^{\vee}\otimes\mathcal{E}(-1))
=h^{q+1}(\mathcal{S}^{\vee}\otimes\mathcal{E})$
for $q\geq 2$
by \eqref{SdualE(-1)SdualEtoRestriction}
and \eqref{SdualErestrictionInCase11}.
Hence $h^q(\mathcal{S}^{\vee}\otimes\mathcal{E})=0$ for $q\geq 2$
and $h^{3}(\mathcal{S}^{\vee}\otimes\mathcal{E}(-1))=0$.
Hence 
\[
-h^1(\mathcal{S}^{\vee}\otimes\mathcal{E}(-1))+
h^2(\mathcal{S}^{\vee}\otimes\mathcal{E}(-1))
=\chi(\mathcal{S}^{\vee}\otimes\mathcal{E}(-1))
=-8+c_3=0
\]
by \eqref{SERR(-1)}.
Set $a=h^0(\mathcal{S}^{\vee}\otimes\mathcal{E})$.
Then $a=h^1(\mathcal{S}^{\vee}\otimes\mathcal{E}(-1))
=h^2(\mathcal{S}^{\vee}\otimes\mathcal{E}(-1))
=h^1(\mathcal{S}^{\vee}\otimes\mathcal{E})$.
We see that $a=1$ by  \eqref{SdualE(-1)SdualEtoRestriction}
and \eqref{SdualErestrictionInCase11}.
We 
apply to $\mathcal{E}$ the Bondal spectral sequence~\eqref{BondalSpectral}.
It follows from \eqref{CohomologyOfE(-1)RestInCase11InTh2-1}
that $h^q(\mathcal{E}(-2))$ vanishes 
unless $q=2$
and that $h^2(\mathcal{E}(-2))=2$. 
Since $h^0(\mathcal{E}|_{\mathbb{Q}^2})=r+2$, we see that $h^0(\mathcal{E})=r+2$.
Therefore $\Ext^3(G,\mathcal{E})=0$,
$\Ext^2(G,\mathcal{E})\cong S_3^{\oplus 2}$,
$\Ext^1(G,\mathcal{E})
\cong 
S_1$, 
and 
$\Hom(G,\mathcal{E})$ fits in the following exact sequence:
\[
0\to S_0^{\oplus r+2}\to \Hom(G,\mathcal{E})\to S_1\to 0.
\]
Therefore Lemma~\ref{S2Arilemma} implies 
that $E_2^{p,q}=0$ unless $(p,q)=(-3,2)$, $(-1,1)$, $(-1,0)$, or $(0,0)$,
that $E_2^{-3,2}\cong \mathcal{O}(-1)^{\oplus 2}$,
that $E_2^{-1,1}\cong \mathcal{S}(-1)$,
and that there exists the following exact sequence:
\[0\to E_2^{-1,0}\to \mathcal{S}(-1)\to \mathcal{O}^{\oplus r+2}\to E_2^{0,0}\to 0.\]
The Bondal spectral sequence implies that $E_2^{-1,0}=0$,
that $E_2^{0,0}\cong E_3^{0,0}$,
and that we have the following exact sequences:
\[
0\to E_3^{-3,2}\to \mathcal{O}(-1)^{\oplus 2}\xrightarrow{\varphi} \mathcal{S}(-1)\to E_3^{-1,1}\to 0;
\]
\[0\to E_3^{-3,2}\to E_3^{0,0}\to E_4^{0,0}\to 0;\]
\[
0\to E_4^{0,0}\to \mathcal{E}\to E_3^{-1,1}\to 0.
\]
Since $\mathcal{E}$ is nef, $E_3^{-1,1}$ cannot admit a negative degree quotient.
Hence $\varphi\neq 0$.
Thus the composite of some inclusion $\mathcal{O}(-1)\to \mathcal{O}(-1)^{\oplus 2}$
and $\varphi$ is non-zero, and it induces a morphism 
$\bar{\varphi}:\mathcal{O}(-1)\to \mathcal{I}_L$ for some line $L$ in $\mathbb{Q}^3$.
Moreover $\bar{\varphi}$ fits in the following exact sequence:
\[
0\to E_3^{-3,2}\to \mathcal{O}(-1)\xrightarrow{\bar{\varphi}} \mathcal{I}_L\to E_3^{-1,1}\to 0.
\]
This shows that $E_3^{-1,1}|_M$ admits a negative degree quotient for some line $M$ in $\mathbb{Q}^3$.
This is a contradiction. Therefore $\mathcal{E}|_{\mathbb{Q}^2}$ cannot belong to Case 
(8) of Theorem~\ref{Chern(2,2)}.

\section{The case where $\mathcal{E}|_{\mathbb{Q}^2}$ belongs to Case
(9) of Theorem~\ref{Chern(2,2)}}
Suppose that $\mathcal{E}|_{\mathbb{Q}^2}$ fits in either of the following exact sequences:
\[
0\to \mathcal{O}(-1,-1)^{\oplus 2}
\to 
\mathcal{O}^{\oplus r+2}\to \mathcal{E}|_{\mathbb{Q}^2}\to 0.
\]
Then $c_2h=6$.
It then follows from \eqref{tautologicalSelfIntersectionNonNegative}
that $c_3\geq 8$.
Note that
\begin{equation}\label{CohomologyOfE(-1)RestInNewCase10InTh2-1}
h^q(\mathcal{E}(-1)|_{\mathbb{Q}^2})
=
\begin{cases}
2& \textrm{if}\quad  q= 1\\
0& \textrm{if}\quad  q\neq 1,
\end{cases}
\end{equation}
and 
that
\begin{equation}\label{SdualErestrictionInCase12}
h^q(\mathcal{S}^{\vee}\otimes \mathcal{E}|_{\mathbb{Q}^2})
=0 \textrm{ for all } q.
\end{equation}
Hence we have 
\[h^0(\mathcal{E}(-1))=0\]
by \eqref{h^0(E(-2))vanish}.
Note that 
$H^q(\mathcal{E}|_{\mathbb{Q}^2})$ vanishes for all $q>0$.
Since 
$h^q(\mathcal{E})=0$ for all $q>0$ by \eqref{firstvanishing},
we have $h^q(\mathcal{E}(-1))=0$ for all $q\geq 2$.
It follows from \eqref{e(-1)RR} that  
\[0\geq -h^1(\mathcal{E}(-1))=\chi(\mathcal{E}(-1))=-4+\dfrac{1}{2}c_3
\geq 0.\]
Therefore $c_3=8$
and $h^1(\mathcal{E}(-1))=0$.
Now it follows from \eqref{SSdualE(-1)twist}  that
$h^q(\mathcal{S}^{\vee}\otimes\mathcal{E})=h^{q+1}(\mathcal{S}^{\vee}\otimes\mathcal{E}(-1))$
for any $q$.
Moreover 
$h^{q+1}(\mathcal{S}^{\vee}\otimes\mathcal{E}(-1))
=h^{q+1}(\mathcal{S}^{\vee}\otimes\mathcal{E})$
for any $q$
by \eqref{SdualE(-1)SdualEtoRestriction}
and \eqref{SdualErestrictionInCase12}.
Hence $h^q(\mathcal{S}^{\vee}\otimes\mathcal{E})=0$ for any $q$.
We apply to $\mathcal{E}$ the Bondal spectral sequence~\eqref{BondalSpectral}.
It follows from \eqref{CohomologyOfE(-1)RestInNewCase10InTh2-1}
that $h^q(\mathcal{E}(-2))$ vanishes 
unless $q=2$
and that $h^2(\mathcal{E}(-2))=2$. 
Since $h^0(\mathcal{E}|_{\mathbb{Q}^2})=r+2$, we see that $h^0(\mathcal{E})=r+2$.
Therefore $\Hom(G,\mathcal{E})\cong S_0^{\oplus r+2}$,
$\Ext^1(G,\mathcal{E})=0$, 
$\Ext^2(G,\mathcal{E})\cong S_3^{\oplus 2}$,
and 
$\Ext^3(G,\mathcal{E})=0$.
Therefore Lemma~\ref{S2Arilemma} implies 
that $E_2^{p,q}=0$ unless $(p,q)=(-3,2)$ or $(0,0)$,
that $E_2^{-3,2}\cong \mathcal{O}(-1)^{\oplus 2}$,
and that $E_2^{0,0}\cong \mathcal{O}^{\oplus r+2}$.
It follows from  the Bondal spectral sequence
that 
$\mathcal{E}$ fits in the following exact sequence:
\[
0\to \mathcal{O}(-1)^{\oplus 2}
\to \mathcal{O}^{\oplus r+2}\to \mathcal{E}\to 0.
\]
This is Case (7) of Theorem~\ref{Chern2}.

\section{The case where $\mathcal{E}|_{\mathbb{Q}^2}$ belongs to Case
(10) of Theorem~\ref{Chern(2,2)}}
Suppose that $\mathcal{E}|_{\mathbb{Q}^2}$ fits in the following exact sequence:
\[
0\to \mathcal{O}(-2,-2)
\to 
\mathcal{O}^{\oplus r+1}\to \mathcal{E}|_{\mathbb{Q}^2}\to 0.
\]
Then $c_2h=8$.
It then follows from \eqref{tautologicalSelfIntersectionNonNegative}
that $c_3\geq 16$.
Note that
\begin{equation}\label{CohomologyOfEestInCase13InTh2-1}
h^q(\mathcal{E}|_{\mathbb{Q}^2})=
\begin{cases}
r+1& \textrm{if}\quad  q= 0\\
1& \textrm{if}\quad  q= 1\\
0& \textrm{if}\quad  q= 2,
\end{cases}
\end{equation}
that
\begin{equation}\label{CohomologyOfE(-1)RestInCase13InTh2-1}
h^q(\mathcal{E}(-1)|_{\mathbb{Q}^2})=
\begin{cases}
4& \textrm{if}\quad  q= 1\\
0& \textrm{if}\quad  q\neq 1,
\end{cases}
\end{equation}
that 
\begin{equation}\label{SdualE(1)restrictionInCase13}
h^q(\mathcal{S}^{\vee}\otimes \mathcal{E}(1)|_{\mathbb{Q}^2})=
\begin{cases}
4r+4& \textrm{if}\quad  q= 0\\
0& \textrm{if}\quad  q\neq 0,
\end{cases}
\end{equation}
and that 
\begin{equation}\label{SdualErestrictionInCase13}
h^q(\mathcal{S}^{\vee}\otimes \mathcal{E}|_{\mathbb{Q}^2})=
\begin{cases}
4& \textrm{if}\quad  q= 1\\
0& \textrm{if}\quad  q\neq 1.
\end{cases}
\end{equation}
Hence we have 
\[h^0(\mathcal{E}(-1))=0\]
by \eqref{h^0(E(-2))vanish}.
Then $h^0(\mathcal{S}^{\vee}\otimes \mathcal{E}(-1))=0$ 
by  \eqref{SSdualE(-1)twist}.
Since 
$h^q(\mathcal{E})=0$ for all $q>0$ by \eqref{firstvanishing},
we have 
$h^2(\mathcal{E}(-1))=1$ and $h^3(\mathcal{E}(-1))=0$
by \eqref{CohomologyOfEestInCase13InTh2-1}.
It then follows from \eqref{e(-1)RR} that  
\[1\geq 1 -h^1(\mathcal{E}(-1))=\chi(\mathcal{E}(-1))=-7+\dfrac{1}{2}c_3
\geq 1.\]
Therefore $c_3=16$
and $h^1(\mathcal{E}(-1))=0$. Hence $h^0(\mathcal{E})=r+1$
since $h^0(\mathcal{E}|_{\mathbb{Q}^2})=r+1$ by 
\eqref{CohomologyOfEestInCase13InTh2-1}.
Moreover $h^2(\mathcal{E}(-2))=5$ and $h^q(\mathcal{E}(-2))=0$ unless $q=2$ 
by \eqref{CohomologyOfE(-1)RestInCase13InTh2-1}.
It follows from \eqref{ExtSE(1)q2ijouvanishing}
and \eqref{SdualE(1)restrictionInCase13} that 
\[
h^q(\mathcal{S}^{\vee}\otimes\mathcal{E})=0\textrm{ for }q\geq 2.
\]
Moreover $h^0(\mathcal{S}^{\vee}\otimes\mathcal{E})=0$ 
since $h^0(\mathcal{S}^{\vee}\otimes\mathcal{E}|_{\mathbb{Q}^2})=0$
by \eqref{SdualErestrictionInCase13}.
Hence it follows from \eqref{SERR}
\[
-h^1(\mathcal{S}^{\vee}\otimes\mathcal{E})
=\chi(\mathcal{S}^{\vee}\otimes\mathcal{E})
=16-4c_2h+c_3=0.
\]
We apply to $\mathcal{E}$ the Bondal spectral sequence~\eqref{BondalSpectral}.
We see that $\Hom(G,\mathcal{E})\cong S_0^{\oplus r+1}$,
that $\Ext^q(G,\mathcal{E})=0$ for $q=1,3$,
and that 
$\Ext^2(G,\mathcal{E})$ fits in the following exact sequence 
of right $A$-modules:
\[0\to S_2\to \Ext^2(G,\mathcal{E})\to S_3^{\oplus 5}\to 0.\]
Therefore Lemma~\ref{S2Arilemma} implies 
that $E_2^{p,q}=0$ unless $(p,q)=(-3,2)$ $(-2,2)$ or $(0,0)$,
that $E_2^{0,0}\cong \mathcal{O}^{\oplus r+1}$,
and 
that $E_2^{-3,2}$ and $E_2^{-2,2}$ fit in the following exact sequence:
\begin{equation}\label{(13)noE_2^22noMotonoExSeq}
0\to E_2^{-3,2}\to \mathcal{O}(-1)^{\oplus 5}\to T_{\mathbb{P}^4}(-2)|_{\mathbb{Q}^3}
\to E_2^{-2,2}\to 0.
\end{equation}
The Bondal spectral sequence induces 
the following 
isomorphisms and exact sequences:
\[E_2^{-3,2}\cong E_3^{-3,2};\]
\[E_2^{0,0}\cong E_3^{0,0};\]
\[
0\to E_3^{-3,2}\to E_3^{0,0}\to E_4^{0,0}\to 0;
\]
\[
0\to E_4^{0,0}\to \mathcal{E}\to E_2^{-2,2}\to 0.
\]
Note here that $E_2^{-2,2}|_L$ cannot admit a negative degree quotient 
for any line $L\subset\mathbb{Q}^3$ since $\mathcal{E}$ is nef.
We will show that $E_2^{-2,2}=0$; 
first note that the exact sequence \eqref{(13)noE_2^22noMotonoExSeq}
induces the following exact sequence:
\[
0\to E_2^{-3,2}\to \mathcal{O}(-1)^{\oplus 5}
\oplus \mathcal{O}(-2)
\xrightarrow{p}
\mathcal{O}(-1)^{\oplus 5}
\to E_2^{-2,2}\to 0.
\]
Consider the composite of the inclusion 
$\mathcal{O}(-1)^{\oplus 5}\to  \mathcal{O}(-1)^{\oplus 5}
\oplus \mathcal{O}(-2)$ and the morphism $p$ above,
and let $\mathcal{O}(-1)^{\oplus a}$ be the cokernel of this composite.
Then we have the following exact sequence:
\[
\mathcal{O}(-2)\xrightarrow{\pi} \mathcal{O}(-1)^{\oplus a}\to E_2^{-2,2}\to 0.
\]
We claim here that $a=0$. Suppose, to the contrary, that $a>0$.
Since $E_2^{-2,2}$ cannot be isomorphic to $\mathcal{O}(-1)^{\oplus a}$,
the morphism $\pi$ above
is not zero.
Therefore the composite of $\pi$ and some projection
$\mathcal{O}(-1)^{\oplus a}\to \mathcal{O}(-1)$ is not zero,
whose quotient is of the form $\mathcal{O}_H(-1)$ for some hyperplane 
$H$ in $\mathbb{Q}^3$.
Hence $E_2^{-2,2}$ admits $\mathcal{O}_H(-1)$ as a quotient. This is a contradiction.
Thus $a=0$ and $E_2^{-2,2}=0$. Moreover we see that $E_2^{-3,2}\cong \mathcal{O}(-2)$.
Therefore $\mathcal{E}$ fits in the following exact sequence:
\[
0\to \mathcal{O}(-2)\to \mathcal{O}^{\oplus r+1}\to \mathcal{E}\to 0.
\]
This is Case (8) of Theorem~\ref{Chern2}.

\section{The case where $\mathcal{E}|_{\mathbb{Q}^2}$ belongs to Case
(11) of Theorem~\ref{Chern(2,2)}}
Suppose that $\mathcal{E}|_{\mathbb{Q}^2}$ fits in the following exact sequence:
\[
0\to \mathcal{O}(-2,-2)
\to 
\mathcal{O}^{\oplus r+1}\to \mathcal{E}|_{\mathbb{Q}^2}\to k(p)\to  0.
\]
Then $c_2h=7$.
It then follows from \eqref{tautologicalSelfIntersectionNonNegative}
that 
\[c_3\geq 12.\]
We claim here that $h^0(\mathcal{E}(-1)|_{\mathbb{Q}^2})=0$.
Indeed, if $h^0(\mathcal{E}(-1)|_{\mathbb{Q}^2})\neq 0$, then 
\[c_2h\leq 
c_1(\mathcal{E}|_{\mathbb{Q}^2})
(c_1(\mathcal{E}|_{\mathbb{Q}^2})-c_1(\mathcal{O}_{\mathbb{Q}^2}(1,1)))=4\]
by \cite[Lemma~10.1]{MR4453350}. This is a contradiction. 
Hence $h^0(\mathcal{E}(-1)|_{\mathbb{Q}^2})=0$.
Thus we have $h^0(\mathcal{E}(-1))=0$
by \eqref{h^0(E(-2))vanish}.
It follows from \eqref{e(-1)RR} that  
\[\chi(\mathcal{E}(-1))=-\dfrac{11}{2}+\dfrac{1}{2}c_3.\]
In particular $c_3$ is odd, and thus $c_3>12$.
Therefore $h^q(\mathcal{E}(-1))=0$ for all $q>0$ by 
\eqref{higherE(-1)vanish}.
This implies that $\chi(\mathcal{E}(-1))=0$, which is a contradiction.
Therefore $\mathcal{E}|_{\mathbb{Q}^2}$ cannot belong to Case (11) of Theorem~\ref{Chern(2,2)}.

\section{The case where $\mathcal{E}|_{\mathbb{Q}^2}$ belongs to Case
(12) or (13) of Theorem~\ref{Chern(2,2)}}\label{Case(13)or(14)OfTheorem(2,2)}
Suppose that $\mathcal{E}|_{\mathbb{Q}^2}$ fits in either of the following exact sequences:
\[
0\to \mathcal{O}(-2,-2)
\to 
\mathcal{O}^{\oplus r}\to \mathcal{E}|_{\mathbb{Q}^2}\to \mathcal{O}\to  0;
\]
\[0\to \mathcal{O}(-1,-1)^{\oplus 4}\to   
\mathcal{O}^{\oplus r}\oplus\mathcal{O}(-1,0)^{\oplus 2}
\oplus \mathcal{O}(0,-1)^{\oplus 2}
\to \mathcal{E}|_{\mathbb{Q}^2}\to 0.\]
Then $c_2h=8$.
It then follows from \eqref{tautologicalSelfIntersectionNonNegative}
that 
\[c_3\geq 16.\]
We claim here that $h^0(\mathcal{E}(-1)|_{\mathbb{Q}^2})=0$.
Indeed, if $h^0(\mathcal{E}(-1)|_{\mathbb{Q}^2})\neq 0$, then 
\[c_2h\leq 
c_1(\mathcal{E}|_{\mathbb{Q}^2})
(c_1(\mathcal{E}|_{\mathbb{Q}^2})-c_1(\mathcal{O}_{\mathbb{Q}^2}(1,1)))=4\]
by \cite[Lemma~10.1]{MR4453350}. This is a contradiction. 
Hence $h^0(\mathcal{E}(-1)|_{\mathbb{Q}^2})=0$.
Thus we have $h^0(\mathcal{E}(-1))=0$
by \eqref{h^0(E(-2))vanish}.
Note that $h^q(\mathcal{E}|_{\mathbb{Q}^2})=0$ for all $q>0$.
Since 
$h^q(\mathcal{E})=0$ for all $q>0$ by \eqref{firstvanishing},
this implies that 
$h^q(\mathcal{E}(-1))=0$ for all $q\geq 2$.
It follows from \eqref{e(-1)RR} that  
\[0\geq -h^1(\mathcal{E}(-1))=
\chi(\mathcal{E}(-1))=-7+\dfrac{1}{2}c_3\geq 1.\]
This is a contradiction.
Therefore $\mathcal{E}|_{\mathbb{Q}^2}$ cannot belong to Case (12) 
or (13) of Theorem~\ref{Chern(2,2)}.

\bibliographystyle{alpha}

\newcommand{\noop}[1]{} \newcommand{\noopsort}[1]{}
  \newcommand{\printfirst}[2]{#1} \newcommand{\singleletter}[1]{#1}
  \newcommand{\switchargs}[2]{#2#1}

\end{document}